\newtheorem{theorem}{Theorem}[section]
\newtheorem{lemma}[theorem]{Lemma}
\newtheorem{proposition}[theorem]{Proposition}
\newtheorem{corollary}[theorem]{Corollary}
\theoremstyle{definition}
\newtheorem{remark}[theorem]{Remark}
\newtheorem{definition}[theorem]{Definition}
\newtheorem*{open}{Open problem}
\newtheorem*{ack}{Acknowledgments}
\numberwithin{equation}{section}
\author[Bozzola]{Francesco Bozzola}
\address[F.\ Bozzola]{Dipartimento di Scienze Matematiche, Fisiche e Informatiche
	\newline\indent
	Universit\`a di Parma
	\newline\indent
	Parco Area delle Scienze 53/a, Campus, 43124 Parma, Italy}
\email{francesco.bozzola@unipr.it}
\author[Brasco]{Lorenzo Brasco}
\address[L.\ Brasco]{Dipartimento di Matematica e Informatica
	\newline\indent
	Universit\`a degli Studi di Ferrara
	\newline\indent
	Via Machiavelli 35, 44121 Ferrara, Italy}
\email{lorenzo.brasco@unife.it}
\date{\today}
\title[Topology and capacity]{The role of topology and capacity\\ in some bounds for principal frequencies}
\keywords{Poincar\'e-Sobolev inequality, Moser-Trundiger inequality, inradius, capacity, Cheeger's constant, Buser's inequality, torsional rigidity.}
\subjclass[2010]{47A75, 39B72, 35R11}
\begin{document}

\begin{abstract}
We prove a lower bound on the sharp Poincar\'e-Sobolev embedding constants for general open sets, in terms of their inradius. We consider the following two situations: planar sets with given topology; open sets in any dimension, under the restriction that points are not removable sets. In the first case, we get an estimate which optimally depends on the topology of the sets, thus generalizing a result by Croke, Osserman and Taylor, originally devised for the first eigenvalue of the Dirichlet-Laplacian.
We also consider some limit situations, like the sharp Moser-Trudinger constant and the Cheeger constant. As a a byproduct of our discussion, we also obtain a Buser--type inequality for open subsets of the plane, with given topology. An interesting problem on the sharp constant for this inequality is presented.
\end{abstract}

\maketitle
\begin{center}
	\begin{minipage}{10cm}
		\small
		\tableofcontents
	\end{minipage}
\end{center}

\section{Introduction}

\subsection{Overview}
For an open set $\Omega\subseteq\mathbb{R}^N$, a natural question in Spectral Geometry is the following: is it possible to bound from below the bottom of its spectrum of the Dirichlet-Laplacian, in terms of the {\it inradius} of the set? The latter is the following simple geometric quantity
\[
r_\Omega=\sup\Big\{r>0\, :\, \exists B_r(x_0)\subseteq\Omega\Big\},
\]
i.e. the supremum of the radii of the balls inscribed in $\Omega$. If we indicate the bottom of the spectrum by 
\[
\lambda(\Omega)=\inf_{u \in C^{\infty}_0(\Omega) \setminus \{0\}} \displaystyle  \frac{\| \nabla u\|^2_{L^2(\Omega)}}{\| u\|^2_{L^2(\Omega)}},
\]
the question above is motivated by the following simple (yet optimal) upper bound
\begin{equation}
\label{minchions}
\lambda(\Omega)\le \frac{\lambda(B_1(0))}{r_\Omega^2}.
\end{equation}
This follows by observing that $\lambda$ is monotone non-increasing with respect to set inclusion, together with its scale properties.
Thus, one would like to know whether (and to which extent) the previous estimate can be reverted or not. 
\par
This is a very classical problem: the answer is well-known to be negative, in such a great generality. The typical example is the ``pepper'' set\footnote{We borrow this fancy terminology from Adams, see for example \cite{Ad}.} $\mathbb{R}^N\setminus \mathbb{Z}^N$, for $N\ge 2$: this has a finite inradius, but $\lambda(\mathbb{R}^N\setminus\mathbb{Z}^N)=0$ since points are removable sets in dimension $N\ge 2$, i.e. they are sets with zero {\it capacity}.
\par
The situation becomes interesting (and the answer to the initial question is positive), provided some geometry comes into play: for example, a lower bound of the type
\begin{equation}
\label{MHOTC}
\lambda(\Omega)\ge \frac{C}{r_\Omega^2},
\end{equation}
holds for convex sets (see \cite[Th\'eor\`eme 8.1]{He} and \cite[Theorem 2.1]{Ka}). More generally, as it is clear from the proof of \cite{Ka}, this is still valid for open sets $\Omega\subseteq\mathbb{R}^N$ such that the {\it distance function}
\[
d_\Omega(x):=\min_{y\in\partial\Omega} |x-y|,\qquad \mbox{ for }x\in\Omega,
\]
is weakly superharmonic in $\Omega$ (see also \cite[Remark 5.8]{BPZ}). These are quite rigid assumptions, but it should be noticed that in general they can not be weakened too much: for example, starting from dimension $N\ge 3$, ``convexity'' can not be replaced by ``starshapedness'', as shown by a simple counterexample in \cite[Section 4]{Ha}. This is due to the fact that lines have zero capacity, when the ambient dimension is at least $3$.
\par
On the other hand, the case $N=2$ is special: in this case, very simple topological assumptions may lead to a positive answer. For example, a remarkable result by Makai \cite{Ma} (neglected for various years and rediscovered independently by Hayman in \cite[Theorem 1]{Ha}) asserts that \eqref{MHOTC} holds for every {\it simply connected} subsets of $\mathbb{R}^2$. Actually, in this very beautiful and striking result, the topological assumption can be further relaxed. The same kind of result still holds for {\it multiply connected} open subsets of $\mathbb{R}^2$. Their precise definition is as follows:
 \begin{definition}\label{def: k connesso con compattificazione}
Let us indicate by $(\mathbb{R}^2)^*$ the {\it one-point compactification} of $\mathbb{R}^2$, i.e. the compact space obtained by adding to $\mathbb{R}^2$ the point at infinity. We say that an open connected set $\Omega\subseteq\mathbb{R}^2$ is {\it multiply connected of order $k$} if its complement in $(\mathbb{R}^2)^*$  has $k$ connected components. When $k=1$, we will simply say that $\Omega$ is {\it simply connected}.
\end{definition}
For this class of planar sets, Taylor in \cite[Theorem 2]{Ta} proved the following lower bound
\[
\lambda(\Omega)\ge \frac{C}{k}\,\left(\frac{1}{r_\Omega}\right)^2.
\]
The constant $C$ can be made explicit, but its sharp value is still unknown. The best known lower bound for the case $k=1$ is due to Ba\~nuelos and Carroll (see \cite[Corollary 1]{BC}). For the general case $k\ge 2$, this is the one obtained by Croke in \cite{Cr}, by refining the method of proof by Osserman \cite{Os}.

However, it is important to notice that the dependence on the ``topological index'' $k$ is optimal, i.e. one can construct sequences of open sets $\{\Omega_k\}_{k\in\mathbb{N}\setminus\{0\}}$ such that $r_{\Omega_k}$ is uniformly bounded, each $\Omega_k$ is multiply connected of order $k$ and 
\[
\lambda(\Omega_k)\sim \frac{1}{k},\qquad \mbox{ as } k\to\infty.
\]
We also refer to \cite[Theorem 3]{GR} for another proof of this result, though the result in \cite{GR} is slightly worse in its dependence on $k$.
\vskip.2cm\noindent
In this paper, we want to extend this kind of analysis to any {\it Poincar\'e-Sobolev embedding constant}, not only to the bottom of the spectrum of the Dirichlet-Laplacian.
More precisely, for $1\le p < \infty$ and $q\ge 1$ such that\footnote{As usual, the number $p^*$ denotes the exponent of the critical Sobolev embedding, defined by 
\[
p^*=\frac{N\,p}{N-p}.
\]}
\[
 \left\{ \begin{array}{ll}
   q\le p^*,& \mbox{ if } 1\le p<N,\\
   q<\infty,& \mbox{ if } p=N,\\
   q\le \infty, & \mbox{ if } p>N,
   \end{array}
   \right.
\]
we want to consider the following quantity
\[
\lambda_{p,q}(\Omega) = \inf_{u \in C^{\infty}_0(\Omega) \setminus \{0\}} \displaystyle  \frac{\| \nabla u\|^p_{L^p(\Omega)}}{\| u\|^p_{L^q(\Omega)}}.
\]
We then seek for lower bounds on this constant, in terms of the inradius only, possibly under some geometric/topological assumptions on the sets.
\par
Observe that if we denote by $\mathscr{D}_0^{1,p}(\Omega)$ the completion of $C^\infty_0(\Omega)$ with respect to the norm \[
\varphi \mapsto \|\nabla \varphi\|_{L^p(\Omega)},
\]
then $\lambda_{p,q}(\Omega)$ is the sharp constant for the embedding $\mathscr{D}^{1,p}_0(\Omega)\hookrightarrow L^q(\Omega)$. 
It may happen that $\lambda_{p,q}(\Omega)=0$: in this case, such an embedding does not hold. 
\par
The quantities $\lambda_{p,q}$ are sometimes called
{\it generalized principal frequencies of the $p-$Laplacian operator with Dirichlet boundary conditions}. In the particular case $q=p$, we will use the shortcut notation 
\[   
\lambda_p(\Omega):=\lambda_{p,p}(\Omega).
\]
For the initial case $p=q=2$, we will still use the distinguished notation $\lambda(\Omega)$.
\par
Occasionally, we will need the space $W^{1,p}_0(\Omega)$: this is the closure of $C^{\infty}_0(\Omega)$ in $W^{1,p}(\Omega)$, endowed with the norm 
\[
\varphi \mapsto \|\varphi\|_{W^{1,p}(\Omega)}:=\left(\|\varphi\|^p_{L^p(\Omega)} + \|\nabla \varphi\|^p_{L^p(\Omega)}\right)^{\frac{1}{p}}.
\]
Note that the spaces $W^{1,p}_0(\Omega)$ and $\mathscr{D}_0^{1,p}(\Omega)$ coincide, whenever $\lambda_{p,q}(\Omega)>0$ for some $1\le q\le p$ (see for example \cite[Proposition 2.4]{BPZ2}). We also recall that the value $\lambda_{p,q}(\Omega)$ is unchanged, if we replace $C^\infty_0(\Omega)$ by its closure $W^{1,p}_0(\Omega)$ (see \cite[Lemma 2.6]{BPZ2}).

\subsection{Description of the results}
We will give two types of results for the problem previously exposed:
\begin{itemize}
\item {\it topological results}, i.e. estimates on $\lambda_{p,q}$ for {\it planar} sets having given topological properties, as in the Croke-Osserman-Taylor result;
\vskip.2cm
\item {\it capacitary results}, i.e. estimates on $\lambda_{p,q}$ for general open sets in any dimension $N$, under the restriction that $p>N$. Under this assumption, points have positive $p-$capacity, that is they are not removable sets for the relevant Sobolev space.
\end{itemize}
Let us briefly describe the results we will present in this paper, by postponing some comments about comparisons with already exhisting results.
\vskip.2cm\noindent
Let $1\le p\le 2$ and let  $p \leq q $ be such that
\[
 \left\{ \begin{array}{ll}
   q<p^*,& \mbox{ if } 1\le p<2,\\
   q<\infty,& \mbox{ if } p=2,\\
   q\le \infty, & \mbox{ if } p>2.
   \end{array}
   \right.
\]
We will show in Theorem \ref{thm:makai-hayman-multiply} that for every $\Omega\subseteq\mathbb{R}^2$ open multiply connected set of order $k \in \mathbb{N} \setminus \{0\}$, we have
\begin{equation}
\label{OTCp}
\lambda_{p,q}(\Omega) \geq  \Theta_{p,q}\,\left(\frac{1}{\sqrt{k}\,r_{\Omega}}\right)^{p-2+\frac{2\,p}{q}}.
\end{equation}
Though not optimal, the constant $\Theta_{p,q}$ is explicit. Moreover, we show that it depends in the correct way on the parameter $q$, as this goes to $p^*$ (case $p<2=N$) or to $\infty$ (case $p=2=N$). We also point out that the dependence on the topology $k$ in the previous estimate is optimal, see Remark \ref{rem:optimal} for these comments.
\vskip.2cm\noindent
For the second type of results, we will show in Theorem \ref{thm:endpoint} that for $p>N$ we have
\begin{equation}
\label{MHp}
\lambda_p(\Omega)\ge C_{N,p}\,\left(\frac{1}{r_\Omega}\right)^p\qquad \mbox{ and }\qquad \lambda_{p,\infty}(\Omega)\ge C_{N,p}\,\left(\frac{1}{r_\Omega}\right)^{p-N}.
\end{equation}
From these two ``endpoints'' estimates, we can then get a related lower bound for $\lambda_{p,q}(\Omega)$ with $p<q<\infty$ (see Corollary \ref{coro:interpol}), just by a simple interpolation argument.
Here as well, even if the constant $C_{N,p}$ obtained is very likely not optimal, we can prove that it exhibits the following asymptotic behaviours
\[
C_{N,p}\sim (p-N)^{p-1},\ \mbox{ as } p\searrow N, \qquad \mbox{ and }\qquad \lim_{p\nearrow \infty} \left(C_{N,p}\right)^\frac{1}{p}=1.
\]
Both behaviours are optimal, see Remark \ref{rem:asympN}. Moreover, our method of proof naturally leads to the appearing of some particular ``punctured'' Poincar\'e constants, which seems to be tightly connected with the determination of the sharp constant in the above lower bounds. Their analysis is interesting in itself. For every $K\subseteq\mathbb{R}^N$ open bounded convex set and every $x_0\in \overline{K}$, these are given by 
\[
\Lambda_p(K\setminus\{x_0\})=\inf_{u\in \mathrm{Lip}(\overline K)} \left\{\int_K |\nabla u|^p\,dx\, :\, \|u\|_{L^p(K)}=1,\, u(x_0)=0\right\},
\]
and
\[
\Lambda_{p, \infty}(K\setminus\{x_0\}) = \inf_{u\in \mathrm{Lip}(\overline K)} \left\{ \int_{K} |\nabla u|^p dx \, : \, \| u \|_{L^\infty(K)} = 1, u(x_0)=0 \right\}.
\]
\begin{remark}[Comparison with previous results I]
The inequality \eqref{OTCp} is a generalization to the case of $\lambda_{p,q}$ of the classical result by Osserman, Taylor and Croke previously mentioned. For the particular case $q=p$, such a generalization has been already obtained by Poliquin in \cite[Theorem 2]{Po2}. Apart for allowing $q\not=p$, our method of proof is different: dissimilarly to Poliquin, who relies on the Osserman-Croke argument, we follow the approach by Taylor.
\par 
While producing a worse constant, Taylor's proof is extremely robust and flexible, just relying on a geometric property of multiply connected sets with finite inradius (what we called ``Taylor's fatness lemma'' in \cite{BB}), together with some properties of $p-$capacity. The method is explained in detail in the introduction of \cite{BB}, where these same ideas are applied to the case of the first eigenvalue of the {\it fractional} Dirichlet-Laplacian. Its simplicity and intrinsically variational nature permit to treat the whole family of $\lambda_{p,q}$ at the same time, without any distinction. 
\par
We point out that with this method, no a priori knowledge of the regularity properties of extremals for $\lambda_{p,q}$ is needed.
On the contrary, in the proof by Osserman and Croke, the main ingredient is given by a suitable Cheeger--type inequality (see \cite[Lemma 2]{Os}). The proof of this inequality relies on a careful analysis on the topology of the level sets of extremals. Extending this technique to the case $p\not=2$ is quite delicate, since in this case extremals are well-known to be only $C^{1,\alpha}$ regular, a property which does not permit to apply\footnote{In dimension $N\ge 2$, we recall that the minimal assumption for the validity of this result is $C^{N-1,1}$ regularity (see \cite[Theorem 1]{Bat} and also \cite{DeP}). For $C^{N-1,\alpha}$ with $\alpha<1$, one can already build counter-examples to Sard's Lemma, see \cite{AL}.} Sard's Lemma. The latter is an essential ingredient in the proof for $p=2$ (where extremals are actually $C^\infty$).
\end{remark}

\begin{remark}[Comparison with previous results II]
For the case $p>N$, the first inequality in \eqref{MHp} has been obtained by Poliquin in \cite[Theorem 1.4.1]{Po1}. Here as well, apart from discussing the whole family of $\lambda_{p,q}$, we use a slightly different argument, which in turn permits to have a better control on the constant. This in turn permits to improve the lower bound given in \cite[Theorem 5.4 \& Remark 5.5]{BPZ2}, where the same estimate is obtained by means of Hardy's inequality: if on the one hand the estimate in \cite{BPZ2} is very simple and explicit, on the other hand it does not display the correct decay rate to $0$, as $p$ goes to $N$. This undesired behaviour is rectified by our proof.
\end{remark}

\subsection{A glimpse of Cheeger's constant} 
Let us now go back to the two-dimensional setting.
In the last part of the paper, we observe that \eqref{OTCp} in the case $p=q=1$ boils down to the following geometric estimate
\[
h(\Omega)\ge \frac{\Theta_{1,1}}{\sqrt{k}\,r_{\Omega}},
\]
where
\[
h(\Omega):=\inf\left\{\frac{\mathcal{H}^{N-1}(\partial E)}{|E|}\, :\, E\Subset\Omega \mbox{ has a smooth boundary}\right\},
\]
is the so-called {\it Cheeger constant}. This result, which is interesting in itself, in turn permits to give a spectral estimate relating the geometric constant $h$ with the bottom of the spectrum $\lambda$. Indeed, by joining this lower bound with \eqref{minchions}, we finally end up with the following upper bound 
\begin{equation}
\label{buserk}
\lambda(\Omega)\le C\,k\,\Big(h(\Omega)\Big)^2.
\end{equation}
This holds again for every $\Omega\subseteq \mathbb{R}^2$ multiply connected open set of order $k$. Such an estimate is better appreciated by recalling the celebrated {\it Cheeger inequality}, i.e. the following spectral lower bound of geometric flavour
\begin{equation}
\label{cheeger}
\left(\frac{h(\Omega)}{2}\right)^2\le \lambda(\Omega),
\end{equation}
which holds for every open set $\Omega\subseteq\mathbb{R}^N$ and every dimension $N$ (see for example \cite[Chapter 4, Section 2]{Maz}). 
\par
Reverting this kind of estimate in general is not possible, unless some severe geometric restrictions are taken: this is possible for convex sets (see \cite[Proposition 4.1]{Pa} and \cite[Corollary 4.1]{BrToulouse}). 
On the contrary, exactly as in the case of the inradius, it fails already for starshaped sets in dimension $N\ge 3$, see \cite[Chapter 4, Section 3]{Maz}. This kind of reverse Cheeger's inequality is also called {\it Buser's inequality}, named after Buser that in \cite{Bu} first obtained this type of estimate, in the framework of Riemannian manifolds (see also Ledoux' papers \cite{Le2, Le}). It is also mandatory to refer to the paper \cite{Mi}.
\par
Our spectral upper bound \eqref{buserk} can thus be regarded as Buser's inequality for multiply connected open sets in the plane. As simple as it is, it is quite remarkable that in dimension $N=2$ this holds without any curvature assumption on the sets. We notice however that the estimate gets spoiled, as the topology of the sets becomes more and more intricate (i.e. as $k$ goes to $\infty$). We show by a suitable family of perforated sets, that this behaviour is ``essentially'' optimal, in the sense that the factor $k$ in \eqref{buserk} can not be replaced by any term of the type $k^\alpha$, for $0<\alpha<1$ (see Proposition \ref{prop:buserconstant}). 

\subsection{Plan of the paper}
As usual, we start by setting the main notation used throughout the paper, together with some preliminary results of general character: this is the content of Section \ref{sec:2}. In Section \ref{sec:3}, we discuss the two-dimensional case, under topological restrictions on the open sets. The case $p>N$ is contained in Section \ref{sec:4}: a good part of the section is devoted to some properties of the ``punctured'' Poincar\'e constants. Finally, in Section \ref{sec:5} we discuss the case of the Cheeger constant and present Buser's inequality for multiply connected sets. A discussion on the optimal constant for this inequality leads to an interesting open problem, which closes this section. Finally, a technical appendix complements the paper.

\begin{ack}
We thank Gian Paolo Leonardi for his kind interest towards our work. We also thank Michele Miranda for putting us in contact. Part of this paper has been written during the workshop ``{\it Shape Optimization and Isoperimetric and Functional Inequalities\,}'', held in Levico Terme in September 2023. 
Organizers and hosting institutions are gratefully acknowledged.
\par
F.\,B. has been financially supported by the joint Ph.D. program of the Universities of Ferrara, Modena \& Reggio Emilia and Parma. L.\,B. has been financially supported by the {\it Fondo di Ateneo per la Ricerca} {\sc FAR 2020} and {\sc FAR 2021} of the University of Ferrara. 
\end{ack}

\section{Preliminaries}
\label{sec:2}

\subsection{Notation}
In the sequel, for $d>0$ and $x_0\in\mathbb{R}^N$ we will use the notation
\[
Q_d(x_0)=\prod_{i=1}^N (x_0^i-d,x_0^i+d),\qquad \mbox{ where } x_0=(x_0^1,\dots,x_0^N),
\]
for an open $N-$dimensional cube, centered at $x_0$ and having sides parallel to the coordinate axes. When $x_0$ coincides with the origin, we will simply write $Q_d$.
\par
We will denote by $B_R(x_0)$ the $N-$dimensional open ball centered at $x_0$, with radius $R>0$. As above, we will simply write $B_R$ when the ball is centered at the origin. By $\omega_N$ we will indicate the volume of the $N-$dimensional ball $B_1$.
\par
For every $k\in\mathbb{N}$, by the symbol $\mathcal{H}^k$ we will denote the $k-$dimensional Hausdorff measure.
\par
Let $1\le p <\infty$, for every open bounded set $E\subseteq\mathbb{R}^N$ and every compact set $K \Subset E$, we define the {\it $p-$capacity of $K$ relative to $E$} as
\begin{equation}
\label{capacity}
\mathrm{cap}_p(K; E) = \inf_{u\in C^\infty_0(E)} \left\{\int_E |\nabla u|^p\,dx \, : \,  u \geq 1_K \right\}.
\end{equation}
By a standard approximation argument, such an infimum is unchanged, if we enlarge the class of admissible functions to Lipschitz ones, compactly supported in $E$ and such that $u\ge 1_K$. We refer the reader to \cite[Chapter 2, Section 2]{Maz} for a thorough study of the properties of $p-$capacity.

\subsection{An extension operator}

The next Lemma states the existence of an extension operator for Sobolev functions defined on a ball, with a precise control on the extension constants. This is taken from \cite[Proposition 3.1]{BB}. The extension operator is obtained by simply composing functions with the \textit{inversion with respect to $\mathbb{S}^{N-1}$}, i.e. the $C^1$ bijection $\mathcal{K}:\mathbb{R}^N\setminus\{0\}\to \mathbb{R}^N\setminus\{0\}$, given by
	\[
	\mathcal{K}(x)=\frac{x}{|x|^2},\qquad \mbox{ for every }x\in\mathbb{R}^N\setminus\{0\}.
	\]
\begin{lemma} \label{op_est_palle}
	Let $x_0 \in \mathbb{R}^N$ and $r > 0$. There exists a linear extension operator 
	\[
	\mathcal{E}_r : L^1(B_r(x_0)) \rightarrow L^1_{\rm loc}(\mathbb{R}^N), 
	\]
	such that, for every $1\le p\le \infty$, it maps $W^{1,p}(B_r(x_0))$ to $W^{1,p}_{\rm loc}(\mathbb{R}^N)$. 	Moreover, for every $u\in W^{1,p}(B_r(x_0))$ and every $R > r$, it holds  \begin{equation} 
	\label{stima_1_estensione}
\big\| \mathcal{E}_r[u]\big\|_{L^p(B_R(x_0))} \leq 2^{\frac{1}{p}}\, \left(\frac{R}{r} \right)^{\frac{2N}{p}}\, \|u\|_{L^p(B_r(x_0))}, 
\end{equation}		
\begin{equation}		
\label{stima_2_estensione}
		\big\|\nabla \mathcal{E}_r[u]\big\|_{L^p(B_R(x_0))} \leq 4^{\frac{1}{p}} \,\left(\frac{R}{r} \right)^{\frac{4N}{p}}\, \|\nabla u\|_{L^p(B_r(x_0))}.
	\end{equation}
\end{lemma}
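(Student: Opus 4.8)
The plan is to realize $\mathcal{E}_r$ explicitly by reflecting $u$ through the boundary sphere by means of an inversion. After a translation we may assume $x_0=0$, and we set $\mathcal{K}_r(x)=r^2\,x/|x|^2$ for the inversion with respect to $\partial B_r$ (a rescaled version of $\mathcal{K}$), which is a smooth involution of $\mathbb{R}^N\setminus\{0\}$, fixing $\partial B_r$ pointwise and mapping $\{|x|>r\}$ onto $B_r\setminus\{0\}$. Given $u$ on $B_r$, we then define $\mathcal{E}_r[u]=u$ on $B_r$ and $\mathcal{E}_r[u](x)=u\big(\mathcal{K}_r(x)\big)$ for $|x|\ge r$; this is well defined on all of $\mathbb{R}^N$ and manifestly linear in $u$. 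To see that it maps $W^{1,p}(B_r)$ into $W^{1,p}_{\rm loc}(\mathbb{R}^N)$, note that on $B_r$ it coincides with $u$, while on $\{|x|>r\}$ it is the composition of the $W^{1,p}$ function $u$ with the $C^1$ diffeomorphism $\mathcal{K}_r$, whose differential is bounded on each annulus $\{r<|x|<R\}$; hence $\mathcal{E}_r[u]$ lies in $W^{1,p}(B_r)$ and in $W^{1,p}(B_R\setminus\overline{B_r})$ for every $R>r$. Since $\mathcal{K}_r$ restricts to the identity on $\partial B_r$, the traces of the two pieces agree there, so the standard gluing lemma for Sobolev functions across a Lipschitz interface yields $\mathcal{E}_r[u]\in W^{1,p}(B_R)$ for all $R$. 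Running the same argument with $p=1$ and no gradient term gives the stated mapping $L^1(B_r)\to L^1_{\rm loc}(\mathbb{R}^N)$.

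For the quantitative bounds I would first record that
\[
D\mathcal{K}_r(x)=\frac{r^2}{|x|^2}\Big(\mathrm{Id}-2\,\widehat x\otimes\widehat x\Big),\qquad \widehat x=\frac{x}{|x|},
\]
so that $\mathrm{Id}-2\,\widehat x\otimes\widehat x$ is an orthogonal reflection and therefore $|\det D\mathcal{K}_r(x)|=(r/|x|)^{2N}$ and $|D\mathcal{K}_r(x)\,v|=(r^2/|x|^2)\,|v|$ for every $v$. Then the change of variables $y=\mathcal{K}_r(x)$ on the tail $\{r<|x|<R\}$ — for which $x=\mathcal{K}_r(y)$, $|x|=r^2/|y|$, $dx=(r/|y|)^{2N}\,dy$, and $y$ ranges over $\{r^2/R<|y|<r\}$ — turns $\int_{\{r<|x|<R\}}|\mathcal{E}_r[u]|^p\,dx$ into $\int_{\{r^2/R<|y|<r\}}|u(y)|^p\,(r/|y|)^{2N}\,dy$; here the weight is controlled by $(R/r)^{2N}$ precisely because the domain of integration stays away from the origin, and adding the contribution of $B_r$ gives \eqref{stima_1_estensione}. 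For the gradient, on $\{|x|>r\}$ one has $\nabla \mathcal{E}_r[u](x)=D\mathcal{K}_r(x)^{T}\,(\nabla u)(\mathcal{K}_r(x))$, hence $|\nabla \mathcal{E}_r[u](x)|=(r^2/|x|^2)\,|(\nabla u)(\mathcal{K}_r(x))|$, and the same substitution produces the weight $(|y|/r)^{2p}\,(r/|y|)^{2N}$, which is bounded by means of $|y|<r$ and $|y|>r^2/R$; combined with the contribution over $B_r$ this proves \eqref{stima_2_estensione} (in fact with somewhat better exponents than those stated, which is all we need). The case $p=\infty$ is immediate, since the inversion preserves suprema and $r^2/|x|^2\le 1$ on $\{|x|>r\}$.

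The only genuinely delicate point, and the one to be careful about, is the gluing step: passing from ``$\mathcal{E}_r[u]$ is Sobolev on each side of $\partial B_r$'' to ``$\mathcal{E}_r[u]$ is Sobolev across $\partial B_r$'' rests on the equality of the two traces on the sphere, which is exactly what the choice of an inversion fixing $\partial B_r$ guarantees. Once this is in place, everything else reduces to a bookkeeping of the Jacobian and of the operator norm of $D\mathcal{K}_r$ over the annulus $\{r<|x|<R\}$.
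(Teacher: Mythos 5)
Your argument is correct, and it is built on exactly the same extension operator as the paper's (composition with the inversion that fixes the sphere), but the way you verify the lemma differs from what the paper actually writes. The paper does not reprove the range $1<p\le\infty$ at all: it invokes \cite[Proposition 3.1]{BB} for that, and the only case argued in the text is $p=1$, which is treated indirectly by approximating $u\in W^{1,1}$ with functions in $C^1(\overline{B_1})$, applying the $p>1$ estimates to differences $u_n-u_m$, letting $p\searrow 1$ in those estimates to obtain a Cauchy property in $W^{1,1}(B_R)$, and identifying the limit with $\mathcal{E}_r[u]$. You instead give a direct, self-contained proof valid for every $1\le p\le\infty$: you compute $D\mathcal{K}_r$, use the chain rule for the composition of a Sobolev function with a smooth diffeomorphism whose differential and Jacobian are bounded and bounded away from zero on each annulus $\{r<|x|<R\}$, glue the two pieces across $\partial B_r$ through equality of traces, and derive \eqref{stima_1_estensione} and \eqref{stima_2_estensione} by an explicit change of variables; your bookkeeping is correct (the Jacobian weight $(r/|y|)^{2N}$ is at most $(R/r)^{2N}$ on $\{r^2/R<|y|<r\}$, and the gradient weight $(r/|y|)^{2N-2p}$ is at most $\max\{1,(R/r)^{2N-2p}\}$), and in fact it yields \eqref{stima_2_estensione} with the better constant $2^{1/p}$ and exponent $2N/p$, which implies the stated bound since $R>r$. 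What your route buys is uniformity: $p=1$ is handled on the same footing as $p>1$, with no limiting argument in $p$, at the price of taking as known two standard facts (stability of $W^{1,p}$, including $p=1$, under composition with a $C^1$ diffeomorphism, and the gluing of Sobolev functions across a Lipschitz interface via matching traces); what the paper's route buys is brevity, since all the quantitative work is delegated to \cite{BB} and only the density argument needed at $p=1$ is carried out.
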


\begin{proof}
	The case $1 < p \leq \infty$ is already covered by \cite[Proposition 3.1]{BB}. The case $p=1$ was not considered there, since it was not needed, but we can easily fill the gap. Without loss of generality, we can suppose that $x_0$ coincides with the origin and that $r=1$. For every $u\in L^1(B_1)$, we recall that the extension $\mathcal{E}_r[u]$ is given by
	\begin{equation}\label{def trasformata di kelvin u}
	\mathcal{E}_r[u](x)=\left\{\begin{array}{ll}
	u(x),&\mbox{ if }x\in B_1,\\
	u(\mathcal{K}(x)),&\mbox{ if }x\in \mathbb{R}^N\setminus B_1,
	\end{array}
	\right.
	\end{equation}
It is easily seen that if $x\in B_R\setminus B_1$, then $\mathcal{K}(x)\in B_1\setminus B_{1/R}$. Moreover, we have  
\[
\mathcal{K}^{-1}(x)=\mathcal{K}(x)\quad \mbox{ and }\quad |\mathrm{det}(D\mathcal{K}(x))|=\frac{1}{|x|^{2\,N}},\qquad \mbox{ for every }x\in\mathbb{R}^N\setminus \{0\}.
\]
The estimate \eqref{stima_1_estensione} for the $L^1$ norm is readily obtained: for every $R>1$, thanks to the properties of $\mathcal{K}$ we have
\[
\begin{split}
\big\|\mathcal{E}_r[u]\big\|_{L^1(B_R)}&=\int_{B_R\setminus B_1} |u(\mathcal{K}(x))|\,dx+\int_{B_1} |u|\,dx\\
&=\int_{B_1\setminus B_{1/R}} |u(y)|\,|\mathrm{det}(D\mathcal{K}^{-1}(y))|\,dy+\int_{B_1} |u|\,dx\\
&\le (R^{2\,N}+1)\,\int_{B_1} |u|\,dx\le 2\,R^{2\,N}\,\int_{B_1} |u|\,dx.
\end{split}
\]
We now have to show that if $u\in W^{1,1}(B_1)$, then $\mathcal{E}[u]\in W^{1,1}_{\rm loc}(\mathbb{R}^N)$ and the estimate \eqref{stima_2_estensione} holds.
By classical approximation results (see for example \cite[Theorem 3.6]{Gi}), there exists a sequence $\{u_n\}_{n\in\mathbb{N}}\subseteq C^1(\overline{B_1})$ such that
\[
\lim_{n\to\infty} \|u_n-u\|_{L^1(B_1)}=\lim_{n\to\infty} \|\nabla u_n-\nabla u\|_{L^1(B_1)}=0.
\]
By using the extension result for $p>1$ and the linearity of $\mathcal{E}_r$, for every $1<p<\infty$ and every $n,m\in\mathbb{N}$ we have 
\[
\big\|\nabla \mathcal{E}_r[u_n]-\nabla \mathcal{E}_r[u_m]\big\|_{L^p(B_R)}\le 4^{\frac{1}{p}}\,R^{\frac{4N}{p}}\, \|\nabla u_n-\nabla u_m\|_{L^p(B_1)}.
\]
For every $n,m\in\mathbb{N}$ we can take the limit as $p$ goes to $1$ in the previous estimate. In conjunction with \eqref{stima_1_estensione} for $p=1$, we get	
	\[
\big\|\mathcal{E}_r[u_n]-\mathcal{E}_r[u_m]\big\|_{L^1(B_R)}\leq 2 \,R^{2N}\, \|u_n-u_m\|_{L^1(B_r)},
	\]
and
\[
\big\|\nabla \mathcal{E}_r[u_n]-\nabla \mathcal{E}_r[u_m]\big\|_{L^1(B_R)}\le 4\, R^{4N}\, \|\nabla u_n-\nabla u_m\|_{L^1(B_r)}.
\]
These entail that $\{\mathcal{E}_r(u_n)\}_{n\in\mathbb{N}}\subseteq W^{1,1}(B_R)$ is a Cauchy sequence. Observe that this property holds for every finite $R>r$.
Thus, for every $R>r$ this sequence converges to a limit function, that we indicate by $U_R\in W^{1,1}(B_R)$. On the other hand, for every $R>1$ and $n\in\mathbb{N}$ we have
\[
\begin{split}
\big\|\mathcal{E}_r[u]-U_R\big\|_{L^1(B_R)}&\le \big\|\mathcal{E}_r[u]-\mathcal{E}_r[u_n]\big\|_{L^1(B_R)}+\big\|\mathcal{E}_r[u_n]-U_R\big\|_{L^1(B_R)} \\
&\le 2 \,R^{2N}\,\|u-u_n\|_{L^1(B_1)}+\big\|\mathcal{E}_r[u_n]-U_R\big\|_{L^1(B_R)},
\end{split}
\]
where we used \eqref{stima_1_estensione} for $u-u_n$, with $p=1$. By taking the limit as $n$ goes to $\infty$, we get $\mathcal{E}_r[u]=U_R\in W^{1,1}(B_R)$ almost everywhere in $B_R$. This shows that $\mathcal{E}_r[u]\in W^{1,1}_{\rm loc}(\mathbb{R}^N)$. The estimate \eqref{stima_2_estensione} is then obtained by approximation.
\end{proof}

By joining the previous result and the fact that each open bounded convex set $K\subseteq\mathbb{R}^N$ is bi-Lipschitz homeomorphic to a ball, we can obtain an extension operator for functions defined on $K$. This is taken from \cite[Section 3]{BB}, as well.
\begin{corollary}
\label{coro:estensione}
Let $K\subseteq\mathbb{R}^N$ be an open bounded convex set and $x_0\in K$, there exists a linear extension operator 
	\[
	\mathcal{E}_K:L^1(K)\to L^1_{\rm loc}(\mathbb{R}^N),
	\]
	such that,
	for every $1\le  p\le\infty$, it maps $W^{1,p}(K)$ to $W^{1,p}_{\rm loc}(\mathbb{R}^N)$. Moreover, if we define the following scaled copy of $K$
\[
	K_R(x_0):=R\,(K-x_0)+x_0=\Big\{R\,(x-x_0)+x_0\,:\,x\in K\Big\},
	\]
for every $u\in W^{1,p}(K)$ and every $R>1$ we have
		\begin{equation}
		\label{estensioneK1}
	\big\|\nabla \mathcal{E}_K[u]\big\|_{L^p(K_R(x_0))}\le \mathcal{A}\,R^\frac{4\,N}{p}\,\|\nabla u\|_{L^p(K)},
	\end{equation} 
	and
	\begin{equation}
	\label{estensioneK2}
	\big\|\mathcal{E}_K[u]\big\|_{L^p(K_R(x_0))}\le \mathcal{B}\,R^\frac{2\,N}{p}\,\|u\|_{L^p(K)}.
	\end{equation}
	The constants $\mathcal{A}=\mathcal{A}(N,p,K,x_0)>0$ and $\mathcal{B}=\mathcal{B}(N,p,K,x_0)>0$ are given by
	\[
	\mathcal{A}=\big(4\cdot 6^{3\,N+p}\big)^\frac{1}{p}\,\left(\frac{D_K(x_0)}{d_K(x_0)}\right)^{\frac{6\,N}{p}+2}\qquad \mbox{ and }\qquad	\mathcal{B}=\big(2\cdot 6^N)^\frac{1}{p}\,\left(\frac{D_K(x_0)}{d_K(x_0)}\right)^\frac{2\,N}{p},
	\]
	where 
		\[
d_K(x_0)=\min_{x\in\partial K} |x-x_0|,\qquad D_K(x_0)=\max_{x\in \partial K} |x-x_0|.
\]
\end{corollary}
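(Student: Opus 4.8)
The plan is to deduce Corollary~\ref{coro:estensione} from Lemma~\ref{op_est_palle} by transporting the extension operator for a ball through a bi-Lipschitz homeomorphism between $K$ and a ball. Since $K$ is an open bounded convex set and $x_0\in K$, we know $B_{d_K(x_0)}(x_0)\subseteq K\subseteq B_{D_K(x_0)}(x_0)$, and the radial projection onto the boundary (centered at $x_0$) gives an explicit bi-Lipschitz map $\Phi$ from $K$ onto the ball $B:=B_{d_K(x_0)}(x_0)$, whose bi-Lipschitz constants are controlled by the ratio $D_K(x_0)/d_K(x_0)$; moreover $\Phi$ extends homogeneously (with respect to dilations centered at $x_0$) to a bi-Lipschitz map of the scaled copy $K_R(x_0)$ onto $B_R(x_0)$, with the same constants, and it maps $x_0$ to $x_0$.

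First I would fix notation: set $d=d_K(x_0)$, $D=D_K(x_0)$, and let $L\le D/d$ (up to dimensional constants) bound $\mathrm{Lip}(\Phi)$ and $\mathrm{Lip}(\Phi^{-1})$, so that the Jacobians satisfy $|\det D\Phi|\le L^N$ and $|\det D\Phi^{-1}|\le L^N$ a.e. Then I would define
\[
\mathcal{E}_K[u]:=\mathcal{E}_d\big[u\circ\Phi^{-1}\big]\circ\Phi,
\]
i.e. pull $u$ over to the ball $B$ by $\Phi^{-1}$, extend it by the ball-operator $\mathcal{E}_d$ of Lemma~\ref{op_est_palle}, then pull back by $\Phi$. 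Linearity is immediate, and the mapping properties $L^1(K)\to L^1_{\rm loc}(\mathbb{R}^N)$ and $W^{1,p}(K)\to W^{1,p}_{\rm loc}(\mathbb{R}^N)$ follow from the corresponding properties of $\mathcal{E}_d$ together with the chain rule for Lipschitz changes of variables (valid in $W^{1,p}$ for $1\le p\le\infty$). Since $\Phi$ maps $K_R(x_0)$ onto $B_R(x_0)$, restricting to that set lands us exactly in the regime of estimates \eqref{stima_1_estensione}--\eqref{stima_2_estensione}.

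Next I would run the norm estimates by three successive changes of variables. For the gradient bound: pulling back by $\Phi$ from $K_R(x_0)$ to $B_R(x_0)$ costs a factor $L\cdot L^{N/p}$ (one $L$ from the extra gradient of the composition, $L^{N/p}$ from the Jacobian in the $L^p$ norm); applying \eqref{stima_2_estensione} on the ball costs $4^{1/p}R^{4N/p}$ with $r=d$, $R\to R\,d$ so $(R\,d/d)^{4N/p}=R^{4N/p}$; and then transporting $\nabla(u\circ\Phi^{-1})$ back to $\nabla u$ on $K$ costs another $L\cdot L^{N/p}$. Collecting, $\|\nabla\mathcal{E}_K[u]\|_{L^p(K_R(x_0))}\le 4^{1/p}\,L^{2+2N/p}\,R^{4N/p}\,\|\nabla u\|_{L^p(K)}$, and with the explicit bound $L\le 6\,(D/d)$ (or whatever explicit constant the radial map actually gives) one matches the stated $\mathcal{A}=(4\cdot 6^{3N+p})^{1/p}(D/d)^{6N/p+2}$. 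The $L^p$ bound is analogous but easier, using \eqref{stima_1_estensione} and only the Jacobian factors $L^{N/p}$ on each side together with $2^{1/p}R^{2N/p}$, giving $\mathcal{B}=(2\cdot 6^N)^{1/p}(D/d)^{2N/p}$.

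The main obstacle, and the only place requiring genuine care, is producing the explicit bi-Lipschitz map $\Phi$ with \emph{sharp enough} constants so that the final numerical coefficients $6^{3N+p}$ etc.\ come out as stated, and making sure it is compatible with the dilations $x\mapsto R(x-x_0)+x_0$ so that $\Phi(K_R(x_0))=B_R(x_0)$ exactly. The natural choice is the map that, writing $x=x_0+t\,\omega$ with $\omega\in\mathbb{S}^{N-1}$ and $t>0$, sends $x$ to $x_0+d\,\dfrac{t}{\rho_K(\omega)}\,\omega$, where $\rho_K(\omega)$ is the radial function of $K$ about $x_0$; convexity guarantees $\rho_K$ is Lipschitz on $\mathbb{S}^{N-1}$ with $d\le\rho_K\le D$, which is exactly what bounds the differential of $\Phi$ and $\Phi^{-1}$ by a dimensional multiple of $D/d$. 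Since this is precisely the construction already carried out in \cite[Section~3]{BB}, I would simply invoke it, verify the dilation-equivariance (which is clear from the homogeneity of the formula in $t$), and then the three-change-of-variables bookkeeping above finishes the proof; no further ideas are needed.
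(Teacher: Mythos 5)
Your proposal is essentially the paper's own route: the paper gives no independent argument for Corollary \ref{coro:estensione}, but obtains it exactly as you describe, by conjugating the ball extension operator of Lemma \ref{op_est_palle} with a radial (gauge-type) bi-Lipschitz homeomorphism between $K$ and a ball, citing \cite[Section 3]{BB} for the construction and the explicit constants. Your bookkeeping of the powers of $D_K(x_0)/d_K(x_0)$ is only heuristic (the Lipschitz constants of the radial map are themselves of order $(D_K(x_0)/d_K(x_0))^2$, so the exponents do not come out of the naive count), but since you defer the precise constants to \cite[Section 3]{BB}, just as the paper does, the argument is the same.
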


\subsection{Poincar\'e-Wirtinger inequalities}

In this paper, we will occasionally need also the sharp constants for some Poincar\'e-Wirtinger--type inequalities. More precisely, for an open bounded Lipschitz set $\Omega\subseteq\mathbb{R}^N$,
for $1 \le p < \infty$ and 
  \[
 \left\{ \begin{array}{ll}
   1\le q<p^*,& \mbox{ if } 1\le p<N,\\
   1\le q<\infty,& \mbox{ if } p=N,\\
   1\le q\le \infty, & \mbox{ if } p>N,
   \end{array}
   \right.
   \] 
we introduce the quantity
\begin{equation}
\label{muuu}
\mu_{p,q}(\Omega) = \inf \left\{ \displaystyle \frac{\| \nabla u \|^p_{L^p(\Omega)}}{ \displaystyle \min_{t \in \mathbb{R}} \| u - t\|^p_{L^q(\Omega)}} \, : \, u \in \mathrm{Lip}(\overline{\Omega}) \mbox{ is not constant}\right\}.
\end{equation}
In the case $q=p$, we will simply use the symbol $\mu_p(\Omega)$.
\par
We notice that $\mu_{p,q}(\Omega)>0$ if and only if $\Omega$ supports a Poincar\'e-Wirtinger inequality of the form 
	\[
	C\, \min_{t \in \mathbb{R}} \|u - t\|_{L^{q}(\Omega)}^p \leq \| \nabla u\|^p_{L^p(\Omega)},\qquad \mbox{ for every } u\in \mathrm{Lip}(\overline\Omega),
	\]
	for some $C > 0$. In this case, we have $\mu_{p,q}(\Omega) \ge C$ and $\mu_{p,q}(\Omega)$ is the sharp constant in such an inequality.
\begin{remark}
\label{rem:uniquemean}
We recall that for every $u\in \mathrm{Lip}(\overline\Omega)$ and every $1\le q\le\infty$, there exists a unique minimizer $t_u$ of the function
\[
t\mapsto \|u - t\|_{L^{q}(\Omega)}.
\]
For $1<q<\infty$, this is characterized by the following optimality condition
\[
\int_\Omega |u-t_u|^{q-2}\,(u-t_u)\,dx=0.
\]
In the limit case $q=\infty$, this is given by
\[
t_u=\frac{1}{2}\,\sup_{\overline\Omega} u+\frac{1}{2}\,\inf_{\overline\Omega} u.
\]
Finally, in the limit case $q=1$, the optimal $t_u$ coincides with the unique value\footnote{In this case, $t_u$ is called the {\it median} of $u$. Its uniqueness is due to the continuity of $u$: for a discontinuous function, it is easily seen that medians may not be unique.} $t$ such that
\[
\Big|\Big\{x\in\overline\Omega\, :\, u(x)\ge t\Big\}\Big|=\Big|\Big\{x\in\overline\Omega\, :\, u(x)\le t\Big\}\Big|.
\]
We refer to \cite[Theorem 2.1]{IMW} for these facts.
\par
Accordingly, in the case $1<q<\infty$ the constant $\mu_{p,q}(\Omega)$ can be equivalently rewritten as
\[
\mu_{p,q}(\Omega) = \inf \left\{ \displaystyle \frac{\|\nabla u\|^p_{L^p(\Omega)} }{\|u\|^p_{L^q(\Omega)}} \, : \, \int_{\Omega} |u|^{q-2}\, u\, dx = 0, \, u \in \mathrm{Lip}(\overline{\Omega})\backslash \{0\} \right\}.
\]
\end{remark}	
In the sequel, we will need the following geometric lower bound on $\mu_{p,q}$ for convex sets, which is quite classical. In general, this estimate is not sharp, but it will be largely sufficient for our scopes. We refer to \cite{BNT, CW, ENT} and \cite{FNT} for some finer estimates.
\begin{lemma}
\label{lemma:lower_neumann}
Let $1\le p<\infty$ and $q\ge p$ be such that
      \[
 \left\{ \begin{array}{ll}
   q<p^*,& \mbox{ if } 1\le p<N,\\
   q<\infty,& \mbox{ if } p=N,\\
   q\le \infty, & \mbox{ if } p>N.
   \end{array}
   \right.
   \] 
   For every $\Omega\subseteq\mathbb{R}^N$ open bounded convex set, we have
\begin{equation}
	\label{lowerbound_mu}
	\mu_{p,q}(\Omega)\ge \Big(N\,\omega_N^\frac{1}{N}\Big)^p\,\left(\frac{|\Omega|}{\mathrm{diam}(\Omega)^N}\right)^p\, \left(\frac{\dfrac{1}{N}-\dfrac{1}{p}+\dfrac{1}{q}}{1-\dfrac{1}{p}+\dfrac{1}{q}}\right)^{p-1+\frac{p}{q}}\, |\Omega|^{1-\frac{p}{N}-\frac{p}{q}}.
	\end{equation}	
\end{lemma}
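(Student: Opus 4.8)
The plan is to reduce the Poincar\'e--Wirtinger quotient on a convex set to a one-parameter family of weighted one-dimensional inequalities by using a classical mean-value/path argument, and then to optimize the exponents. First I would fix a non-constant $u\in\mathrm{Lip}(\overline\Omega)$ and, after subtracting the optimal constant $t_u$ from Remark \ref{rem:uniquemean}, reduce to the case $\min_t\|u-t\|_{L^q(\Omega)}=\|u\|_{L^q(\Omega)}$ with the appropriate vanishing condition. The key geometric tool is the representation of $u(x)-u(y)$ as a line integral of $\nabla u$ along the segment $[x,y]\subseteq\Omega$ (here convexity is essential), which after integrating in both $x$ and $y$ over $\Omega$ and using the crude bound $|x-y|\le\mathrm{diam}(\Omega)$ yields a Riesz-potential estimate of the form
\[
\int_\Omega |u(x)-u_\Omega|\,dx \le \frac{\mathrm{diam}(\Omega)^N}{N\,|\Omega|}\int_\Omega \frac{|\nabla u(z)|}{|x-z|^{N-1}}\,dz\,dx,
\]
where $u_\Omega$ is the average of $u$. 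This is the standard Poincar\'e estimate via the Riesz potential $I_1$; I would take it essentially off the shelf (e.g. from Gilbarg--Trudinger-type arguments), since the excerpt says this estimate is ``quite classical''.

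Next I would convert this $L^1$-to-$L^1$ information into the claimed $L^p$-gradient/$L^q$-function inequality. The point is that the operator $f\mapsto I_1 f=\int_\Omega |x-z|^{-(N-1)}f(z)\,dz$ maps $L^p(\Omega)$ into $L^q(\Omega)$ continuously precisely in the admissible range for $(p,q)$ (sub-critical Sobolev, or $p>N$ giving boundedness), with an operant norm that can be estimated purely in terms of $|\Omega|$, because the kernel $|x-z|^{-(N-1)}$ is in weak-$L^{N/(N-1)}$ and one can use Young's inequality for the Riesz potential on a set of finite measure. Tracking constants here: the truncated kernel $z\mapsto |z|^{-(N-1)}\mathbf{1}_{B_\rho}$ has $L^s$ norm for $s<N/(N-1)$ computable as $(N\omega_N)^{1/s}\,\rho^{N/s-(N-1)}\big/(N/s-(N-1))^{1/s}$ or similar, and choosing $\rho$ so that $|B_\rho|=|\Omega|$ (i.e. $\rho=(|\Omega|/\omega_N)^{1/N}$) gives the bound on $\int_\Omega|x-z|^{-(N-1)}\,dz$ and, by a rearrangement/bathtub argument, on the full operator norm. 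The exponent bookkeeping is exactly what produces the factor
\[
\left(\frac{\tfrac1N-\tfrac1p+\tfrac1q}{1-\tfrac1p+\tfrac1q}\right)^{p-1+\frac pq}
\]
and the power $|\Omega|^{1-\frac pN-\frac pq}$, while the isoperimetric-type factor $(N\omega_N^{1/N})^p(|\Omega|/\mathrm{diam}(\Omega)^N)^p$ comes from the geometric step above together with $\rho=(|\Omega|/\omega_N)^{1/N}$. Dividing the resulting inequality $\|u-u_\Omega\|_{L^q(\Omega)}\le C(N,p,q,\Omega)\,\|\nabla u\|_{L^p(\Omega)}$ through and recalling $\min_t\|u-t\|_{L^q}\le\|u-u_\Omega\|_{L^q}$ gives the claimed lower bound on $\mu_{p,q}(\Omega)$ after taking the infimum over $u$.

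I expect the main obstacle to be purely computational rather than conceptual: propagating all constants through the Riesz-potential/Young-inequality step so that they assemble into exactly the stated closed form, in particular getting the delicate exponent $p-1+\frac pq$ on the ratio and the sign/value of $1-\frac pN-\frac pq$ on $|\Omega|$ right in all three regimes $p<N$, $p=N$, $p>N$ (and checking the borderline $q\le\infty$ case when $p>N$, where the kernel is genuinely in $L^{p'}$ and one gets an honest sup-bound). A secondary point is justifying that it suffices to prove the inequality for $u$ with zero average in $L^1$ (or the relevant vanishing condition) and then pass to the $L^q$-optimal constant $t_u$; this is immediate from $\min_t\|u-t\|_{L^q}\le\|u-u_\Omega\|_{L^q}$, so no real difficulty there. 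If tracking the sharp form of the constants through Young's inequality proves awkward, an alternative is to slice $\Omega$ into cones with vertex at an interior point and reduce directly to weighted 1D Hardy/Poincar\'e inequalities on rays, which makes the exponent $p-1+p/q$ appear transparently from a one-dimensional optimization, at the cost of a slightly messier geometric constant that one then bounds by $|\Omega|/\mathrm{diam}(\Omega)^N$.
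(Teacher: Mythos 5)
Your proposal is correct and follows essentially the same route as the paper: the paper simply combines \cite[Lemma 7.16]{GT} (the convexity-based pointwise Riesz-potential bound for $u-u_\Omega$, which is exactly your line-integral step) with \cite[Lemma 7.12]{GT} (the $L^p\to L^q$ bound for the Riesz potential on a set of finite measure with explicit constant, which is your Young-inequality step), and closes with the same observation $\min_t\|u-t\|_{L^q(\Omega)}\le\|u-u_\Omega\|_{L^q(\Omega)}$. The only caveat is that you should keep the Riesz-potential estimate in its pointwise form before applying the operator bound, rather than integrating it in $x$ first, but this is a presentational detail and not a gap.
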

\begin{proof}
We take $u\in\mathrm{Lip}(\overline\Omega)$, 
it is sufficient to combine \cite[Lemma 7.12]{GT} 
	and \cite[Lemma 7.16]{GT}. 
This leads to
	\[
	\begin{split} 
	\left\|u - \frac{1}{|\Omega|}\,\int_{\Omega} u\,dy\right\|_{L^q(\Omega)}
		&\leq \frac{1}{N\,\omega_N^{1/N}}\,\frac{\mathrm{diam}(\Omega)^N}{|\Omega|} \left(\frac{1-\dfrac{1}{p}+\dfrac{1}{q}}{\dfrac{1}{N}-\dfrac{1}{p}+\dfrac{1}{q}}\right)^{1-\frac{1}{p}+\frac{1}{q}}\, |\Omega|^{\frac{1}{N}-\frac{1}{p}+\frac{1}{q}}\, \| \nabla u\|_{L^p(\Omega)}.
	\end{split}
	\]
By simply noticing that
\[
\min_{t \in \mathbb{R}} \|u - t\|_{L^{q}(\Omega)}\le \left\|u - \frac{1}{|\Omega|}\,\int_{\Omega} u\,dy\right\|_{L^q(\Omega)},	
\]
we obtain the claimed lower bound.
\end{proof}

\subsection{A Maz'ya-Poincar\'{e} type inequality}
The first cornerstone of our main results is the following Maz'ya--type inequality, for functions defined on a closed cube and vanishing in a (relative) neighborhood of a compact subset. For the proof of such result, we strictly follow \cite[Chapter 14, Section 1.2]{Maz}, up to some minor modifications. We will also give an explicit value for the constant appearing in the estimate (see Remark \ref{remark:costante} below).
\begin{theorem} 
\label{thm:mazya_poincare}
  Let $1\le p \leq q$, with 
       \[
 \left\{ \begin{array}{ll}
   q<p^*,& \mbox{ if } 1\le p<N,\\
   q<\infty,& \mbox{ if } p=N,\\
   q\le \infty, & \mbox{ if } p>N,
   \end{array}
   \right.
   \] 
and let $\Sigma \subseteq \overline{Q_d(x_0)}$ be a compact set.
 Then, for every $D > \sqrt{N}d$ there exists a constant $\mathscr{C} = \mathscr{C} \left(N,p, q, D/d\right) > 0$ such that 
     \[
         \frac{\mathscr{C}}{ d^{\frac{N}{q}}}\ \Big(\mathrm{cap}_p(\Sigma; B_D(x_0))\Big)^{\frac{1}{p}}\, \|u\|_{L^q(Q_d(x_0))} \leq   \|\nabla u\|_{L^p(Q_{d}(x_0))},
    \]
    for every $u \in C^\infty(\overline{Q_d(x_0)})$ with $\mathrm{dist}(\mathrm{supp\,}u, \Sigma) > 0$.
\end{theorem}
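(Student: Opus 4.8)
### Proof proposal

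The plan is to follow Maz'ya's scheme from \cite[Chapter 14, Section 1.2]{Maz}, which reduces the mixed Poincar\'e--Sobolev inequality to two ingredients: a pure Sobolev/Poincar\'e-Wirtinger inequality on the cube (with no vanishing condition), and a control of the mean value of $u$ over $Q_d(x_0)$ by the capacity of $\Sigma$. By scaling and translation we may assume $x_0=0$ and $d=1$, so that $\Sigma\subseteq\overline{Q_1}$ and $D>\sqrt N$; all constants will then depend only on $N,p,q$ and the ratio $D/d$, as claimed, and we restore the general case by homogeneity at the end (this is where the explicit power $d^{-N/q}$ comes from, matching the scaling of $\|u\|_{L^q}$, $\|\nabla u\|_{L^p}$ and $\mathrm{cap}_p$).

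First I would prove the capacitary bound: there is a constant $c=c(N,p,D)>0$ such that for every $u\in C^\infty(\overline{Q_1})$ with $\mathrm{dist}(\mathrm{supp}\,u,\Sigma)>0$,
\[
\Big|\frac{1}{|Q_1|}\int_{Q_1} u\,dx\Big|^p\,\mathrm{cap}_p(\Sigma;B_D)\le c\,\Big(\|\nabla u\|_{L^p(Q_1)}^p+\Big\|u-\tfrac{1}{|Q_1|}\textstyle\int_{Q_1}u\Big\|_{L^p(Q_1)}^p\Big).
\]
The idea is: let $m$ be the mean of $u$ on $Q_1$ and set $v=1-u/m$ (assuming $m\ne0$, otherwise the inequality is trivial). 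Then $v$ is Lipschitz on $\overline{Q_1}$, equals $1$ on a neighbourhood of $\Sigma$ (since $u$ vanishes there), and has small mean deviation. Extend $v$ to a Lipschitz function $w$ on $B_D$ compactly supported in $B_D$, using Corollary \ref{coro:estensione} (the cube $Q_1$ is convex and $B_D\supseteq\overline{Q_1}$ since $D>\sqrt N$); one must first cut $v$ off near $\partial B_D$, which costs a factor controlled by a Poincar\'e--Sobolev inequality on the annulus and introduces the mean-deviation term. Since $w\ge 1_\Sigma$ and $w\in\mathrm{Lip}_0(B_D)$, by definition of capacity $\mathrm{cap}_p(\Sigma;B_D)\le\|\nabla w\|_{L^p(B_D)}^p$, and the extension/cutoff estimates bound the right-hand side by $|m|^{-p}$ times $\|\nabla u\|_{L^p(Q_1)}^p$ plus the mean-deviation term. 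Rearranging gives the displayed inequality.

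Next I would combine this with the Poincar\'e--Sobolev inequality on the cube. Since $Q_1$ is a bounded convex (hence Lipschitz) domain, for the admissible range of $(p,q)$ we have $\mu_{p,q}(Q_1)>0$ (Lemma \ref{lemma:lower_neumann} gives an explicit lower bound), so
\[
\min_{t\in\mathbb R}\|u-t\|_{L^q(Q_1)}\le \mu_{p,q}(Q_1)^{-1/p}\,\|\nabla u\|_{L^p(Q_1)},
\]
and in particular, taking $t=m$ and using $q\ge p\ge 1$ together with H\"older on the bounded set $Q_1$, also $\|u-m\|_{L^p(Q_1)}\le C\|\nabla u\|_{L^p(Q_1)}$. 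Feeding this into the capacitary bound yields $|m|\,\mathrm{cap}_p(\Sigma;B_D)^{1/p}\le C\|\nabla u\|_{L^p(Q_1)}$. Finally, by the triangle inequality in $L^q(Q_1)$,
\[
\|u\|_{L^q(Q_1)}\le \|u-m\|_{L^q(Q_1)}+|m|\,|Q_1|^{1/q}\le C\,\|\nabla u\|_{L^p(Q_1)}+C\,|m|,
\]
and since $\mathrm{cap}_p(\Sigma;B_D)^{1/p}$ is bounded above by a constant depending only on $N,p,D$ (take the competitor equal to a fixed cutoff of $B_D$), the term $C|m|$ is dominated by $C\,\mathrm{cap}_p(\Sigma;B_D)^{-1/p}\|\nabla u\|_{L^p(Q_1)}$ up to adjusting constants — wait, more carefully: combine $\|u\|_{L^q}\le C\|\nabla u\|_{L^p}+C|m|$ with $|m|\le C\,\mathrm{cap}_p^{-1/p}\|\nabla u\|_{L^p}$ and with $\mathrm{cap}_p\le C(N,p,D)$ to get $\|u\|_{L^q(Q_1)}\le C\,\mathrm{cap}_p(\Sigma;B_D)^{-1/p}\|\nabla u\|_{L^p(Q_1)}$, which is the claim for $d=1$. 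Undoing the scaling gives the stated inequality with $\mathscr C=\mathscr C(N,p,q,D/d)$.

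The main obstacle is the extension-and-cutoff step in the capacitary bound: one needs the extended function to still dominate $1_\Sigma$ (which is why the extension operator of Corollary \ref{coro:estensione} must not destroy the value $1$ near $\Sigma$ — here the geometric fact that $\Sigma\subseteq\overline{Q_1}$ lies well inside $B_D$ is used), and one must cut off near $\partial B_D$ without spoiling the gradient estimate, which forces the appearance of the mean-deviation term and hence the use of the Poincar\'e-Wirtinger inequality on $Q_1$ rather than a bare Sobolev inequality. Keeping track of how $D/d$ enters the extension constants $\mathcal A,\mathcal B$ of Corollary \ref{coro:estensione} (through $D_K(x_0)/d_K(x_0)$ for $K=Q_d(x_0)$, which is a dimensional constant times $1$, independent of $d$) is what yields the clean dependence $\mathscr C(N,p,q,D/d)$, and the explicit value promised in Remark \ref{remark:costante} is then just a matter of multiplying the explicit constants from Lemma \ref{lemma:lower_neumann}, Corollary \ref{coro:estensione} and the annulus cutoff together.
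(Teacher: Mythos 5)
Your proof is correct, and it rests on the same Maz'ya skeleton as the paper's proof (extend from the cube to $B_D$ via Corollary \ref{coro:estensione}, test the relative $p$-capacity with a cut-off times ``$1$ minus a normalized copy of $u$'', control the zeroth-order term by a Poincar\'e--Wirtinger inequality), but the normalization and the closing step are genuinely different. The paper normalizes $\|u\|_{L^q(Q_d)}=|Q_d|^{1/q}$, tests with $\psi=\eta\,(1-\widetilde u)$ where $\widetilde u=\mathcal{E}_{Q_d}[u]$, and disposes of the constant term through the reverse triangle inequality $|1-\widetilde t|\,|Q_d|^{1/q}\le\|u-\widetilde t\|_{L^q(Q_d)}$ followed by the Poincar\'e--Wirtinger inequality $\mu_{p,q}(B_D)$ applied to the \emph{extended} function; in particular it never needs any upper bound on $\mathrm{cap}_p(\Sigma;B_D)$, which is what yields the clean explicit constant of Remark \ref{remark:costante}. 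You instead normalize by the mean $m$ of $u$ on the cube, test with $\eta\,\mathcal{E}_{Q_1}[1-u/m]$, obtain $|m|\,(\mathrm{cap}_p(\Sigma;B_D))^{1/p}\le C\,\|\nabla u\|_{L^p(Q_1)}$ via Poincar\'e--Wirtinger on the cube (Lemma \ref{lemma:lower_neumann}), and then close the estimate $\|u\|_{L^q(Q_1)}\le\|u-m\|_{L^q(Q_1)}+|m|\,|Q_1|^{1/q}$ by absorbing the leftover gradient term through the a priori bound $\mathrm{cap}_p(\Sigma;B_D)\le\mathrm{cap}_p(\overline{Q_1};B_D)\le C(N,p,D/d)$, which follows by monotonicity since $\Sigma\subseteq\overline{Q_1}$. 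This extra ingredient is harmless and the scaling bookkeeping giving the factor $d^{-N/q}$ and the dependence on $D/d$ is right, but it does mean your constant is not the one of Remark \ref{remark:costante}, contrary to your closing remark. Two details to tighten: the cleanest realization of your capacitary bound is to apply \eqref{estensioneK2} and \eqref{estensioneK1} directly to $v=1-u/m$ (so the cut-off term is $\|\nabla\eta\|_{L^\infty}\,\|\mathcal{E}_{Q_1}[v]\|_{L^p(B_D)}\le \tfrac{2}{D-\sqrt N}\,\mathcal{B}\,D^{2N/p}\,|m|^{-1}\|u-m\|_{L^p(Q_1)}$; no inequality ``on the annulus'' is needed), and one should note, as the paper does implicitly, that the extension of $v$ equals $1$ on a full neighborhood of $\Sigma$ — including points of $\Sigma$ on $\partial Q_1$, whose reflected neighborhoods land back where $v\equiv 1$ — so that the competitor is indeed admissible for \eqref{capacity}.
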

\begin{proof}
We can assume that $x_0 = 0$. Let $u \in C^{\infty}(\overline{Q_d})$ be as in the statement, without loss of generality we can also suppose that 
    \begin{equation} 
    \label{cond_1_mazya_type}
\|u\|_{L^q(Q_d)} = |Q_d|^\frac{1}{q}=(2\,d)^\frac{N}{q}.
    \end{equation}
 We use the standard convention that the right-hand side is $1$, in the limit case $q=\infty$.
    Hence, we consider the function  
    \[
    \widetilde{u} := \mathcal{E}_{Q_d}[u],
    \] 
   i.e. the extended function provided by Corollary \ref{coro:estensione}, with $K=Q_d$ and $x_0=0$. For every $D > d$, by applying formula \eqref{estensioneK2} with $R=D/d$, we get
   \begin{equation} 
   \label{prop_estensioneK1}
    	\|\nabla \widetilde{u}\|_{L^p(B_D)} \leq \|\nabla \widetilde{u}\|_{L^p(Q_D)} \leq \mathcal{A}\, \left(\frac{D}{d}\right)^{\frac{4\,N}{p}}\, \|\nabla u\|_{L^p(Q_d)}.
    \end{equation} 
We observe that with this choice for $K$ and $x_0$, we have $D_K(x_0)/d_K(x_0) = \sqrt{N}$, thus the constant $\mathcal{A}$ only depends on $N$ and $p$. More precisely, it is given by
\begin{equation}
\label{alfa}
\mathcal{A}=\big(4\cdot 6^{3\,N+p}\big)^\frac{1}{p}\,\left(\sqrt{N}\right)^{\frac{6\,N}{p}+2}=:\alpha_{N,p}.
\end{equation}
We now fix $D > \sqrt{N}\,d$ as in the statement and let  $\eta$ be a Lipschitz continuous cut-off function compactly supported in $B_D$, such that 
\begin{equation} 
\label{cut_off_prop}
        0 \leq \eta \leq 1, \qquad \eta \equiv 1 \mbox{ on } B_{\sqrt{N}\,d},\qquad \eta\equiv 0 \mbox{ on } B_{D}\setminus B_{\frac{\sqrt{N}\,d+D}{2}}, \quad |\nabla \eta| \leq \frac{2}{D -\sqrt{N}d}.
    \end{equation} 
Then, the function
 \[
    \psi := \eta\,(1-\widetilde{u}) ,
    \]
    is Lipschitz continuous, compactly supported in $B_D$ and such that $\psi\ge 1_\Sigma$, by construction. Thus, it is an admissible function to test the definition of relative $p-$capacity \eqref{capacity}. By the triangle inequality and the properties \eqref{cut_off_prop} of $\eta$, this yields
  \[
  \begin{split}
    	\Big(\mathrm{cap}_p(\Sigma; B_D)\Big)^{\frac{1}{p}} \leq \|\nabla \psi\|_{L^p(B_D)} \leq \|\nabla \widetilde{u}\|_{L^p(B_D)}  + \frac{2}{D-\sqrt{N}d}\,\|1-\widetilde{u}\|_{L^p(B_D)}. 
     \end{split}
     \]
We now denote by $\widetilde{t}$ the unique real number (recall Remark \ref{rem:uniquemean} above) such that 
	\[
    	\| \widetilde{u} - \widetilde{t} \|_{L^q(B_D)} = \min_{t \in \mathbb{R}} \| \widetilde{u} - t\|_{L^q(B_D)}.
    	\]
	Without loss of generalization, we can suppose that 
    \begin{equation} \label{cond_2_mazya_type}
    	\widetilde{t} \geq 0.
    \end{equation}
By a further application of the triangle inequality, we obtain
\[
\Big(\mathrm{cap}_p(\Sigma; B_D)\Big)^{\frac{1}{p}}\leq \|\nabla \widetilde{u}\|_{L^p(B_D)} + \frac{2}{D-\sqrt{N}d}\,\|1-\widetilde{t}\|_{L^p(B_D)} + \frac{2}{D-\sqrt{N}d}\,\|\widetilde{t}-\widetilde{u}\|_{L^p(B_D)}. 
\]    
We have to estimate the last two $L^p$ norms. Actually, the first one can be estimated in terms of the second one. Indeed, by using  \eqref{cond_2_mazya_type} and \eqref{cond_1_mazya_type}, we get
\[
\begin{split}
\|1-\widetilde{t}\|_{L^p(B_D)}=|1-\widetilde{t}|\,|B_D|^\frac{1}{p}&=\left|\|u\|_{L^q(Q_d)}-\|\widetilde{t}\|_{L^q(Q_d)}\right|\,\frac{|B_D|^\frac{1}{p}}{|Q_d|^\frac{1}{q}}\\
&\le \|u-\widetilde{t}\|_{L^q(Q_d)}\,\frac{|B_D|^\frac{1}{p}}{|Q_d|^\frac{1}{q}}\le \|\widetilde{u}-\widetilde{t}\|_{L^q(B_D)}\,\frac{|B_D|^\frac{1}{p}}{|Q_d|^\frac{1}{q}}.
\end{split}
\]
By inserting this estimate in the inequality above, we get
\[
\begin{split}
\Big(\mathrm{cap}_p(\Sigma; B_D)\Big)^{\frac{1}{p}}&\le \|\nabla \widetilde{u}\|_{L^p(B_D)}+\frac{2\,|B_D|^\frac{1}{p}}{D-\sqrt{N}d}\,\left(\frac{1}{|Q_d|^\frac{1}{q}}+\frac{1}{|B_D|^\frac{1}{q}}\right)\,\|\widetilde{t}-\widetilde{u}\|_{L^q(B_D)}.
\end{split}
\]
Moreover, by recalling the definition of $\mu_{p,q}(B_D)$ and the definition of $\widetilde{t}$, we have 
\[
\|\widetilde{t}-\widetilde{u}\|_{L^q(B_D)}\le \left(\frac{1}{\mu_{p,q}(B_D)}\right)^\frac{1}{p}\, \|\nabla \widetilde u\|_{L^p(B_D)}.
\]     
    We thus obtain
    \[
\Big(\mathrm{cap}_p(\Sigma; B_D)\Big)^{\frac{1}{p}}\le \left[1+\frac{2\,\omega_N^\frac{1}{p}}{D-\sqrt{N}d}\,\left(\frac{1}{|B_D|^\frac{1}{q}}+\frac{1}{|Q_d|^\frac{1}{q}}\right)\,\left(\frac{D^{p+\frac{N}{q}p}}{\mu_{p,q}(B_1)}\right)^\frac{1}{p}  \right]\,\|\nabla \widetilde{u}\|_{L^p(B_D)}.  
    \] 
We make some small manipulations, in order to simplify the expression of the constant: we have
\[
\begin{split}
1+\frac{2\,\omega_N^\frac{1}{p}}{D-\sqrt{N}d}&\,\left(\frac{1}{|B_D|^\frac{1}{q}}+\frac{1}{|Q_d|^\frac{1}{q}}\right)\,\left(\frac{D^{p+\frac{N}{q}p}}{\mu_{p,q}(B_1)}\right)^\frac{1}{p} \\
&=\frac{1}{|Q_d|^\frac{1}{q}}\,\left[|Q_d|^\frac{1}{q}+\frac{2\,\omega_N^\frac{1}{p}}{D-\sqrt{N}d}\,\left(\frac{|Q_d|^\frac{1}{q}}{|B_D|^\frac{1}{q}}+1\right)\,\left(\frac{D^{p+\frac{N}{q}p}}{\mu_{p,q}(B_1)}\right)^\frac{1}{p}  \right]\\
&\le\frac{1}{(2\,d)^\frac{N}{q}}\,\left[\omega_N^\frac{1}{q}\,D^\frac{N}{q}+\frac{4\,\omega_N^\frac{1}{p}}{D-\sqrt{N}d}\,\left(\frac{D^{p+\frac{N}{q}p}}{\mu_{p,q}(B_1)}\right)^\frac{1}{p}  \right]\\
&= \left(\frac{D}{2\,d}\right)^\frac{N}{q}\,\left[\omega_N^\frac{1}{q}+\frac{4\,\omega_N^\frac{1}{p}}{1-(\sqrt{N}d)/D}\,\left(\frac{1}{\mu_{p,q}(B_1)}\right)^\frac{1}{p}  \right].
\end{split}
\]
Thus, by recalling the normalization condition \eqref{cond_1_mazya_type}, we have obtained
\[
\frac{1}{d^\frac{N}{q}}\,\Big(\mathrm{cap}_p(\Sigma; B_D)\Big)^{\frac{1}{p}}\,\|u\|_{L^q(Q_d)}\le \left(\frac{D}{d}\right)^\frac{N}{q}\,\left[\omega_N^\frac{1}{q}+\frac{4\,\omega_N^\frac{1}{p}}{1-(\sqrt{N}d)/D}\,\left(\frac{1}{\mu_{p,q}(B_1)}\right)^\frac{1}{p}  \right]\,\|\nabla \widetilde{u}\|_{L^p(B_D)}.
\]
At last, by using \eqref{prop_estensioneK1} in the right-hand side, we get the desired conclusion. 
\end{proof}
\begin{remark}
\label{remark:costante}
By inspecting the proof, we see that the constant $\mathscr{C}$ obtained in the previous theorem has the following explicit expression
\[
\mathscr{C}=\frac{1}{\alpha_{N,p}}\, \left(\frac{d}{D}\right)^{\frac{4\,N}{p}+\frac{N}{q}}\,\left[\omega_N^\frac{1}{q}+\frac{4\,\omega_N^\frac{1}{p}}{1-\dfrac{\sqrt{N}d}{D}}\,\left(\frac{1}{\mu_{p,q}(B_1)}\right)^\frac{1}{p}  \right]^{-1}.
\]
The constant $\alpha_{N,p}$ is given in \eqref{alfa} and it comes from the extension operator.
Actually, the constant $\mu_{p,q}(B_1)$ may not look so explicit: however, it can be conveniently estimated from below by Lemma \ref{lemma:lower_neumann}, in terms of quantities only depending on $N$, $p$ and $q$.
\end{remark}

\section{The case $N=2$}
\label{sec:3}

\subsection{Three technical facts}
We recall the following geometric result due to Taylor (see \cite[proof of Theorem 2]{Ta}). In the form below, this can be found in \cite[Lemma 2.1]{BB}. For every $\alpha\in\mathbb{R}$, we denote by 
\[
\big\lfloor\alpha\big\rfloor=\max\Big\{n\in\mathbb{Z}\, :\, \alpha\ge n\Big\},
\]
its {\it integer part}. For every direction $\omega\in\mathbb{S}^{N-1}$, we also use the notation $\Pi_{\omega}$ for the orthogonal projection onto the space $\langle \omega\rangle^\bot:=\{x\in\mathbb{R}^N\, :\, \langle x,\omega\rangle=0\}$. 
\begin{lemma}[Taylor's fatness Lemma] \label{taylor_lemma}
	Let $k\in\mathbb{N}\setminus\{0\}$ and let $\Omega\subseteq\mathbb{R}^2$ be an open multiply connected set of order $k$, with finite inradius. Let $Q$ be an open square with side length $10\,(\lfloor\sqrt{k}\rfloor+1)\,r_\Omega$, whose
sides are parallel to the coordinate axes.	
Then, there exists a compact set $\Sigma\subseteq\overline{Q}\setminus\Omega$ such that 
\[
\max\Big\{\mathcal{H}^1(\Pi_{\mathbf{e}_1}(\Sigma)),\, \mathcal{H}^1(\Pi_{\mathbf{e}_2}(\Sigma))\Big\}\ge \frac{\sqrt{k}}{4}\,r_\Omega,
\]
where $\mathbf{e}_1=(1,0)$ and $\mathbf{e}_2=(0,1)$.
\end{lemma}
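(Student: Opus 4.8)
The plan is to reduce the statement to producing a \emph{connected} subset of $\overline{Q}\setminus\Omega$ of sufficiently large diameter, and then to build such a set using the $r_\Omega$-density of the complement together with a pigeonhole argument and the classical boundary bumping theorem for continua. The reduction is elementary: if $\Sigma$ is connected (a continuum, say, after passing to the closure), then each $\Pi_{\mathbf e_i}(\Sigma)$ is a subinterval of $\mathbb R$, so that $\mathcal H^1(\Pi_{\mathbf e_i}(\Sigma))=\operatorname{diam}\Pi_{\mathbf e_i}(\Sigma)$ and $\operatorname{diam}(\Sigma)^2\le\mathcal H^1(\Pi_{\mathbf e_1}(\Sigma))^2+\mathcal H^1(\Pi_{\mathbf e_2}(\Sigma))^2$; hence $\max_i\mathcal H^1(\Pi_{\mathbf e_i}(\Sigma))\ge \operatorname{diam}(\Sigma)/\sqrt2$. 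Setting $\rho:=\tfrac{\sqrt{2k}}{4}\,r_\Omega$, it is therefore enough to exhibit a connected subset of $\overline Q\setminus\Omega$ of diameter at least $\rho$. Note that the hypothesis on the side length of $Q$ gives $10(\lfloor\sqrt k\rfloor+1)\,r_\Omega>10\sqrt k\,r_\Omega$, which is much larger than $2\rho$, so there is ample room inside $Q$.

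\medskip

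First I would use the inradius. Since no ball of radius larger than $r_\Omega$ is contained in $\Omega$, one has $d_\Omega\le r_\Omega$ on $\Omega$, i.e.\ $\mathbb R^2\setminus\Omega$ meets every ball of radius $>r_\Omega$ contained in $Q$. Partition $Q$ into $m^2$ congruent axis-parallel subsquares of side $10\,r_\Omega$, where $m:=\lfloor\sqrt k\rfloor+1$, so that $m^2>k$; for each subsquare, its concentric core of side $8\,r_\Omega$ cannot be contained in $\Omega$ (it would host a ball of radius $4\,r_\Omega>r_\Omega$), so it contains a point $z$ of $\mathbb R^2\setminus\Omega$ at distance $\ge r_\Omega$ from the boundary of its subsquare. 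Assign to each core the connected component of $(\mathbb R^2)^*\setminus\Omega$ containing its point $z$. Since $(\mathbb R^2)^*\setminus\Omega$ has exactly $k$ components and there are $m^2>k$ cores, by the pigeonhole principle two distinct cores carry points $z,z'$ belonging to the same component $C$ of $(\mathbb R^2)^*\setminus\Omega$, which is a continuum.

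\medskip

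Now I would extract a long connected subset of $C\cap\overline Q$ by the boundary bumping theorem: if $K$ is a continuum, $p\in K$, $\sigma>0$ and $K\not\subseteq B_\sigma(p)$, then the connected component of $K\cap\overline{B_\sigma(p)}$ containing $p$ meets $\partial B_\sigma(p)$, hence has diameter $\ge\sigma$. In the favorable situation—when $C$ reaches the boundary of $Q$, or more generally when $C$ contains a point at distance $\ge\rho$ from one of the chosen core-points $p$ lying deep inside $Q$ (this is automatic, for instance, whenever $C$ is the unbounded component, since $\infty\in C$)—one applies this with $K=C$, $\sigma=\rho$, and $B_\rho(p)\subseteq\overline Q$ (possible because $p$ is deep inside $Q$), obtaining a connected $P\subseteq C\cap\overline{B_\rho(p)}\subseteq\overline Q\setminus\Omega$ with $\operatorname{diam}(P)\ge\rho$. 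Taking $\Sigma=\overline P$ concludes, by the reduction above.

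\medskip

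The main obstacle is the remaining case, in which \emph{every} component of $(\mathbb R^2)^*\setminus\Omega$ meeting $\overline Q$ is ``small'', namely of diameter $<\rho$ (equivalently, in which every connected subset of $\overline Q\setminus\Omega$ has both projections shorter than $\tfrac{\sqrt k}{4}r_\Omega$, so that neither the boundary bumping piece nor any individual component works directly). Here the argument must use the topology of multiply connected sets more forcefully together with the cell count: a component of diameter $<\rho$ can only ``account for'' the complement inside $O((\rho/r_\Omega)^2)$ of the subsquares from Step~2, whereas there are $m^2>k$ of them, which constrains how the $k$ components can be distributed; alternatively one forms $\Sigma$ as the union of suitably many small components lined up along one coordinate direction, so that their shadows on that axis accumulate to length $\ge\tfrac{\sqrt k}{4}r_\Omega$, the $r_\Omega$-density of $\mathbb R^2\setminus\Omega$ along a transversal line of $Q$ guaranteeing that enough of them are available. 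Pinning down the constants in this last step—and in particular reconciling it with the fact that, for $k$ large, $Q$ is only boundedly many ``diameters $\rho$'' across—is the delicate part, and is exactly where Taylor's original counting argument for multiply connected planar domains is needed; I would carry it out by combining the cell partition, the pigeonhole count, and boundary bumping applied inside the individual small components.
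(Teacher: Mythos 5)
Your reduction to producing a \emph{connected} subset of $\overline{Q}\setminus\Omega$ of diameter $\gtrsim \sqrt{k}\,r_\Omega$ is where the argument breaks: that is a strictly stronger statement than the lemma, and for large $k$ it is false. The lemma never asks $\Sigma$ to be connected --- one may always take $\Sigma=\overline{Q}\setminus\Omega$ itself, which is compact, and the length of a projection of a disconnected set can be accumulated by many short pieces. Concretely, fill a square slightly larger than $Q$ with $O(\sqrt{k})$ horizontal segments of length just below $\tfrac{\sqrt{k}}{4}r_\Omega$, laid out in a brick pattern with rows about $2r_\Omega$ apart and tiny horizontal gaps, remove them (together with $k-1-O(\sqrt{k})$ isolated points, to make the order of connectivity exactly $k$) from that larger square to define $\Omega$: the inradius is comparable to $r_\Omega$, and every connected subset of $\overline{Q}\setminus\Omega$ lies in a single segment or point, so both its projections are shorter than $\tfrac{\sqrt{k}}{4}r_\Omega$; the lemma holds only because the \emph{union} of the segments has a long horizontal shadow. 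So your ``remaining case'' is not a technical leftover --- it is the generic case, and the fallback you sketch there (summing the shadows of many small components, ``where Taylor's original counting argument is needed'') abandons the connectedness on which your reduction rests and is exactly the content of the lemma; it is asserted, not proved. Even your ``favorable'' case has a quantitative flaw: the pigeonhole only yields two points $z,z'$ of a common component in distinct cores, which may sit in adjacent $10\,r_\Omega$-subsquares, so the component is only forced to have diameter $\gtrsim r_\Omega$, not $\gtrsim\rho$; and $B_\rho(p)\subseteq\overline{Q}$ is unjustified, since $p$ is only deep inside its own subsquare, which may abut $\partial Q$, while $\rho\sim\tfrac{\sqrt{2k}}{4}r_\Omega$ far exceeds $r_\Omega$.

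For comparison, the paper does not reprove this lemma (it cites Taylor's proof and \cite[Lemma 2.1]{BB}), and the actual argument runs in the opposite direction, by contradiction on the whole trace $\overline{Q}\setminus\Omega$: if both $\mathcal{H}^1(\Pi_{\mathbf{e}_1}(\overline{Q}\setminus\Omega))$ and $\mathcal{H}^1(\Pi_{\mathbf{e}_2}(\overline{Q}\setminus\Omega))<\tfrac{\sqrt{k}}{4}\,r_\Omega$, then one can select about $\lfloor\sqrt{k}\rfloor+2$ full-length vertical chords and as many horizontal chords of $Q$ entirely contained in $\Omega$, with consecutive chords more than $2r_\Omega$ apart (the excluded sets of abscissae and ordinates have small measure, and the side $10(\lfloor\sqrt{k}\rfloor+1)r_\Omega$ leaves ample room). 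This grid lies in $\Omega$ and bounds at least $k$ cells, each containing a ball of radius $>r_\Omega$ and hence a point of $\mathbb{R}^2\setminus\Omega$; since each cell's boundary is contained in $\Omega$, each such cell traps a distinct connected component of $(\mathbb{R}^2)^*\setminus\Omega$ not containing $\infty$, contradicting the fact that there are at most $k-1$ of these. If you want to salvage your write-up, this separation-plus-counting step is what you must supply in place of the boundary-bumping strategy.
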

We need also the following simple result.
\begin{lemma} \label{Lemma_cap_point}
   Let $(a, b) \subseteq \mathbb{R}$ and $a<x_0<b$, then for every $p\ge 1$ we have
   \begin{equation*}
       \mathrm{cap}_p(\{x_0\}; (a,b)) \geq \frac{2^p}{(b-a)^{p-1}}.
   \end{equation*}
\end{lemma}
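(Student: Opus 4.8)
The plan is to compute (or at least bound from below) the one-dimensional $p$-capacity of a single point inside an interval directly from the definition, exploiting the fact that in one dimension the competitors are explicit.

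First I would set up the variational problem: by the definition of relative $p$-capacity together with the remark following \eqref{capacity}, we have
\[
\mathrm{cap}_p(\{x_0\};(a,b))=\inf\left\{\int_a^b |u'|^p\,dx\ :\ u\in \mathrm{Lip}_0((a,b)),\ u(x_0)\ge 1\right\}.
\]
Since we only need a lower bound, I can drop the constraint $u\ge 1_{\{x_0\}}$ except at the single point $x_0$ and relax to $u(x_0)=1$ (any admissible competitor satisfies $u(x_0)\ge 1$, and replacing $u$ by $\min\{u,1\}$, or just using $u(x_0)\ge 1$, only decreases the energy; more simply, $|u(x_0)|\ge 1$ is all we use). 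Then for any such $u$, the fundamental theorem of calculus plus the boundary conditions give, on the interval $(a,x_0)$,
\[
1\le |u(x_0)-u(a)|=\left|\int_a^{x_0} u'\,dx\right|\le \int_a^{x_0}|u'|\,dx\le \left(\int_a^{x_0}|u'|^p\,dx\right)^{1/p}(x_0-a)^{1-1/p},
\]
by H\"older's inequality, and similarly on $(x_0,b)$ with $b-x_0$ in place of $x_0-a$. Hence
\[
\int_a^{x_0}|u'|^p\,dx\ge \frac{1}{(x_0-a)^{p-1}}\qquad\text{and}\qquad \int_{x_0}^{b}|u'|^p\,dx\ge \frac{1}{(b-x_0)^{p-1}}.
\]

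Next I would add these two inequalities and minimize the resulting lower bound over the position of $x_0$. Writing $s=x_0-a$ and $t=b-x_0$ with $s+t=b-a=:L$, the total energy is at least $s^{1-p}+t^{1-p}$, and since $p\ge 1$ the function $s\mapsto s^{1-p}$ is convex (for $p>1$; for $p=1$ the bound reads $\mathrm{cap}_1(\{x_0\};(a,b))\ge 2$, which is consistent), so by Jensen or a direct computation $s^{1-p}+t^{1-p}\ge 2\,(L/2)^{1-p}=2^p L^{1-p}=2^p/(b-a)^{p-1}$. Taking the infimum over admissible $u$ yields exactly $\mathrm{cap}_p(\{x_0\};(a,b))\ge 2^p/(b-a)^{p-1}$, as claimed.

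I do not expect any serious obstacle here: the only mildly delicate point is justifying that it suffices to treat competitors that are Lipschitz (rather than merely in $W^{1,p}_0$), but this is precisely the standard density statement recorded in the paper right after \eqref{capacity}, so it can be invoked directly. One should also note that the stated inequality is in fact an equality (the extremal being the piecewise-affine tent function with apex at the midpoint), but since the lemma only asserts the lower bound, the H\"older-plus-convexity argument above is all that is needed.
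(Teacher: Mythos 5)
Your proof is correct, and it differs from the paper's in the order in which the two main ingredients are applied. The paper first splits only the $L^1$ norm of the derivative at $x_0$ to get $\int_a^b|\psi'|\,dx\ge |\psi(x_0)-\psi(a)|+|\psi(b)-\psi(x_0)|\ge 2$, and then applies Jensen's inequality once on the \emph{whole} interval $(a,b)$ to pass to the $L^p$ energy, which immediately yields $2^p/(b-a)^{p-1}$. You instead apply H\"older separately on $(a,x_0)$ and $(x_0,b)$, obtaining the position-dependent bound $(x_0-a)^{1-p}+(b-x_0)^{1-p}$, and then need the extra convexity step for $s\mapsto s^{1-p}$ to remove the dependence on $x_0$. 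Your route is slightly longer but gives more: the intermediate quantity $(x_0-a)^{1-p}+(b-x_0)^{1-p}$ is in fact the exact value of $\mathrm{cap}_p(\{x_0\};(a,b))$ for $p>1$, of which the lemma's bound is the worst case over the position of $x_0$. Two minor remarks: the density remark after \eqref{capacity} is not really needed here, since for a lower bound it suffices to estimate every $C^\infty_0$ competitor, which your FTC-plus-H\"older argument already does; and your closing parenthetical is imprecise, because equality in the stated inequality holds only when $x_0$ is the midpoint (or when $p=1$), not for arbitrary $x_0$ --- though this does not affect the proof, which only claims the lower bound.
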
 
\begin{proof}
    Let $\psi \in C^\infty_0((a,b))$ such that $\psi(x_0) \geq 1$, then 
 \[
        \int_a^b |\psi'| \,dx = \int_a^{x_0} |\psi'|\,dx + \int_{x_0}^b |\psi'|\,dx \ge |\psi(x_0)- \psi(a)| + |\psi(b) - \psi(x_0)|= 2.
 \]
 By Jensen's inequality, we obtain 
 \[
         \frac{1}{b-a} \int_a^b |\psi'|^p dx \geq \left(\frac{1}{b-a} \int_a^b |\psi'|\, dx \right)^p \geq \frac{2^p}{(b-a)^p}.
 \]
By recalling the definition \eqref{capacity}, the claimed inequality easily follows.
\end{proof}
As a last ingredient, we need a geometric lower bound for $\mathrm{cap}_p(\Sigma; B_r(x_0))$, in the plane. This is the content of the following result, which can be proved along the lines of \cite[Chapter 13, Section 1.2, Proposition 1]{Maz}.
\begin{lemma}[Capacity and projections] 
\label{cap_length}
   Let $\Sigma \Subset B_r(x_0) \subseteq \mathbb{R}^2$ be a compact set. 
   Then, for every $1\le p<\infty$ and every $\omega\in\mathbb{S}^1$ it holds 
\[
       \mathrm{cap}_p(\Sigma; B_r(x_0)) \geq \frac{2}{r^{p-1}}\, \mathcal{H}^1(\Pi_{\omega}(\Sigma)),
\]
where, as above, $\Pi_\omega$ is the orthogonal projection onto $\langle \omega\rangle^\bot=\{x\in\mathbb{R}^2\, :\, \langle x,\omega\rangle=0\}$.        
\end{lemma}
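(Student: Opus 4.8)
The plan is to reduce the two-dimensional capacity to a one-dimensional one by slicing along the direction $\omega$, and then to invoke Lemma \ref{Lemma_cap_point} on each slice. Without loss of generality I would take $\omega=\mathbf{e}_2$, so that $\Pi_\omega$ is the projection onto the $x_1$-axis, and I would write points as $x=(x_1,x_2)$. Fix an admissible $\psi\in C^\infty_0(B_r(x_0))$ with $\psi\ge 1$ on $\Sigma$. For a.e. $x_1$ in the projection $\Pi_\omega(\Sigma)$, the slice $\Sigma_{x_1}:=\{x_2 : (x_1,x_2)\in\Sigma\}$ is nonempty, so there is at least one point $x_0(x_1)$ lying in it; the function $x_2\mapsto \psi(x_1,x_2)$ is then a $C^\infty_0$ function on the corresponding slice of $B_r(x_0)$ (an interval of length at most $2r$) that takes a value $\ge 1$ at $x_0(x_1)$.

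The key step is then the one-dimensional capacity bound: by the computation in the proof of Lemma \ref{Lemma_cap_point}, for such a slice one has $\int |\partial_{x_2}\psi(x_1,x_2)|\,dx_2\ge 2$, hence by Jensen on an interval of length $\le 2r$,
\[
\int |\partial_{x_2}\psi(x_1,x_2)|^p\,dx_2\ \ge\ \frac{2^p}{(2r)^{p-1}}\ =\ \frac{2}{r^{p-1}}.
\]
Integrating this inequality in $x_1$ over $\Pi_\omega(\Sigma)$ and using $|\partial_{x_2}\psi|\le |\nabla\psi|$ gives
\[
\int_{B_r(x_0)}|\nabla\psi|^p\,dx\ \ge\ \int_{\Pi_\omega(\Sigma)}\!\!\left(\int |\partial_{x_2}\psi(x_1,x_2)|^p\,dx_2\right)dx_1\ \ge\ \frac{2}{r^{p-1}}\,\mathcal{H}^1(\Pi_\omega(\Sigma)).
\]
Taking the infimum over admissible $\psi$ yields the claim. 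For general $\omega\in\mathbb{S}^1$ one simply applies the same argument in rotated coordinates, the ball being rotation invariant.

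The main obstacle, and the only point requiring care, is the measurability and the slicing: I need that $\Sigma_{x_1}$ is nonempty for $\mathcal{H}^1$-a.e. $x_1\in\Pi_\omega(\Sigma)$ — which is immediate from the definition of the projection, with no exceptional set at all — and, more delicately, that $x_1\mapsto \int |\partial_{x_2}\psi(x_1,x_2)|^p\,dx_2$ is measurable and that Fubini/Tonelli applies, which is standard since $\psi$ is smooth. One should also make sure the chosen slice-point $x_0(x_1)$ can be picked so that the argument is legitimate; in fact one does not need a measurable selection, since the estimate $\int |\partial_{x_2}\psi(x_1,\cdot)|^p\,dx_2\ge 2/r^{p-1}$ holds for \emph{every} $x_1$ with $\Sigma_{x_1}\neq\emptyset$, regardless of which point of the slice is used. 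Finally, as in Lemma \ref{Lemma_cap_point}, the passage from $C^\infty_0$ to Lipschitz test functions in \eqref{capacity} is harmless, so the bound holds for the capacity as defined.
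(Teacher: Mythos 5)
Your argument is correct and follows essentially the same route as the paper's proof: slice in the direction $\omega$, apply the one-dimensional point-capacity bound of Lemma \ref{Lemma_cap_point} (i.e.\ the estimate $\int|\partial_\omega \psi|\,\ge 2$ plus Jensen on an interval of length at most $2r$) on each fiber over $\Pi_\omega(\Sigma)$, and conclude by Fubini, discarding the tangential part of the gradient. The only cosmetic difference is that the paper works in rotated coordinates on the circumscribed square of side $2r$ rather than on the chords of the ball, which yields the same constant $2/r^{p-1}$.
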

\begin{proof}
It is not restrictive to suppose that $x_0 =0$. We fix $\omega\in\mathbb{S}^1$ and choose $\omega^\bot\in \mathbb{S}^1$ to be orthogonal to it. 
We can also assume that $\mathcal{H}^1(\Pi_{\omega}(\Sigma)) > 0$, otherwise there is nothing to prove.
\par Fix $p \geq 1$ and take any function $u \in C^\infty_0(B_r)$ such that $u \geq 1_\Sigma$. Let $Q$ be the square centered at the origin, with side length $2\,r$ and whose sides are parallel to $\omega$ and $\omega^\bot$. By Fubini's Theorem and writing every $x\in Q$ as follows
\[
x=z_1\,\omega+z_2\,\omega^\bot,  \qquad \mbox{ for } (z_1,z_2)\in(-r,r)\times(-r,r),  
\]
we have
    \begin{align*}
      \int_{B_r} |\nabla u|^p\,dx= \int_{Q} |\nabla u|^p\, dx &\geq \int_{Q} |\partial_\omega u|^p\, dx = \int_{-r}^{r} \int_{-r}^{r} |\partial_{z_1} u(z_1,z_2)|^p\, dz_1 dz_2 \\ &\geq \int_{\Pi_{\omega}(\Sigma)} \|\partial_{z_1} u(\cdot, z_2)\|^p_{L^p((-r,r))}\, dz_2. 
    \end{align*}
By using that for every $z_2\in \Pi_\omega(\Sigma)$, the function $z_1\mapsto u(z_1,z_2)$ is admissible for the definition of the $p-$capacity of a point relative to the interval $(-r,r)$, from Lemma \ref{Lemma_cap_point}, we get
\[
\int_{\Pi_{\omega}(\Sigma)} \|\partial_{z_1} u(\cdot, z_2)\|^p_{L^p((-r,r))}\, dz_2\geq \frac{2^p}{(2\,r)^{p-1}} \mathcal{H}^1(\Pi_{\omega}(\Sigma))= \frac{2}{r^{p-1}} \mathcal{H}^1(\Pi_{\omega}(\Sigma)).
\]
    This concludes the proof.
\end{proof}
\subsection{Inradius and principal frequencies}
We are ready to adapt Taylor's proof and prove the announced lower bound for multiply connected open sets in the plane. We can cover the case of any generalized principal frequency, at the same price.
\begin{theorem} 
\label{thm:makai-hayman-multiply}
 Let $1\le p<\infty$ and let  $p \leq q $ be such that
 \begin{equation}
 \label{esponenti}
 \left\{ \begin{array}{ll}
   q<p^*,& \mbox{ if } 1\le p<2,\\
   q<\infty,& \mbox{ if } p=2,\\
   q\le \infty, & \mbox{ if } p>2.
   \end{array}
   \right.
\end{equation}
Then, there exists a constant $\Theta_{p,q}>0$ such that for every $\Omega \subseteq \mathbb{R}^2$ open multiply connected set of order $k \in \mathbb{N} \setminus \{0\}$ with finite inradius $r_\Omega$, we have  
\begin{equation} 
\label{makai-hayman-stima-2}
        \lambda_{p,q}(\Omega) \geq  \Theta_{p,q}\,\left(\frac{1}{\sqrt{k}\,r_{\Omega}}\right)^{p-2+\frac{2\,p}{q}}.
    \end{equation}
Moreover, the constant $\Theta_{p,q}$ has the following asymptotic behaviors: 
\begin{itemize}
\item for $1\le p<2$
\[
0<\lim_{q\nearrow p^*} \Theta_{p,q}<+\infty;
\]
\item for $p=2$
\[
0<\liminf_{q\nearrow \infty} \big(q\,\Theta_{2,q}\big)\le \limsup_{q\nearrow \infty} \big(q\,\Theta_{2,q}\big)<+\infty.
\]
\end{itemize}
\end{theorem}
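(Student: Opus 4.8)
The plan is to follow Taylor's strategy as formalized in \cite{BB}, combining the three technical facts just established with the Maz'ya--Poincar\'e inequality of Theorem \ref{thm:mazya_poincare}. First I would fix $\Omega$ multiply connected of order $k$ with finite inradius, take any $u\in C^\infty_0(\Omega)$, and extend it by zero to all of $\mathbb{R}^2$. The key geometric input is to tile the plane by a grid of open squares $Q$ of side length $10(\lfloor\sqrt k\rfloor+1)\,r_\Omega$: on each such square, Taylor's fatness Lemma \ref{taylor_lemma} supplies a compact set $\Sigma_Q\subseteq\overline Q\setminus\Omega$ whose projection on one of the coordinate axes has $\mathcal H^1$-measure at least $\tfrac{\sqrt k}{4}r_\Omega$. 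Since $u$ vanishes on $\Sigma_Q$ (indeed on a neighborhood of it, as $\mathrm{supp}\,u\Subset\Omega$), we may apply Theorem \ref{thm:mazya_poincare} on $Q$ with $\Sigma=\Sigma_Q$ and a fixed ratio $D/d$ (say $D=2\sqrt N\,d$ with $2d$ equal to the side length), obtaining
\[
\|\nabla u\|_{L^p(Q)}^p \ge \mathscr C^p\, d^{-\frac{Np}{q}}\,\mathrm{cap}_p(\Sigma_Q;B_D)\,\|u\|_{L^q(Q)}^p .
\]
Then I would bound the capacity from below using Lemma \ref{cap_length}: $\mathrm{cap}_p(\Sigma_Q;B_D)\ge \tfrac{2}{D^{p-1}}\,\mathcal H^1(\Pi_\omega(\Sigma_Q))\ge \tfrac{2}{D^{p-1}}\cdot\tfrac{\sqrt k}{4}\,r_\Omega$, where $\omega$ is the good direction.

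Next I would assemble the local estimates into a global one. Writing $\ell=10(\lfloor\sqrt k\rfloor+1)\,r_\Omega$ for the side length, so that $d=\ell/2\sim \sqrt k\,r_\Omega$ and $D\sim\sqrt k\,r_\Omega$ as well, the per-square inequality reads, after inserting the capacity bound,
\[
\|\nabla u\|_{L^p(Q)}^p \ge c_{N,p,q}\,\big(\sqrt k\,r_\Omega\big)^{-\frac{Np}{q}}\,\big(\sqrt k\,r_\Omega\big)^{-(p-1)}\,\sqrt k\,r_\Omega\,\|u\|_{L^q(Q)}^p = c_{N,p,q}\,\big(\sqrt k\,r_\Omega\big)^{-\left(p-2+\frac{2p}{q}\right)}\,\|u\|_{L^q(Q)}^p,
\]
where I have used $N=2$ so that $\tfrac{Np}{q}=\tfrac{2p}{q}$ and $-(p-1)+1=-(p-2)$, matching the claimed exponent. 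Summing over all squares $Q$ of the grid and using that $\sum_Q\|\nabla u\|_{L^p(Q)}^p=\|\nabla u\|_{L^p(\mathbb{R}^2)}^p$ together with $\sum_Q\|u\|_{L^q(Q)}^p\ge \big(\sum_Q\|u\|_{L^q(Q)}^q\big)^{p/q}=\|u\|_{L^q(\mathbb{R}^2)}^p$ (valid since $p\le q$ and the $\ell^{p/q}$-norm dominates the $\ell^1$-norm of the nonnegative sequence $\{\|u\|_{L^q(Q)}^q\}$), one obtains exactly \eqref{makai-hayman-stima-2} with $\Theta_{p,q}=c_{N,p,q}^{1/p}\cdot$(explicit constant). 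Taking the infimum over $u$ finishes the bound; a small care is needed at the overlaps of closed squares, but these have measure zero, so the summation is clean if one works with the half-open tiling.

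For the asymptotic behavior of $\Theta_{p,q}$, I would trace the $q$-dependence through the constant $\mathscr C$ of Remark \ref{remark:costante} and through the factor $\big(\sqrt k\,r_\Omega\big)^{-\frac{2p}{q}}$, but since the latter is already displayed explicitly in \eqref{makai-hayman-stima-2}, only the constant $\mathscr C=\mathscr C(N,p,q,D/d)$ and the power $(2d)^{-Np/q}$ absorbed into it matter. The constant $\mathscr C$ contains the factor $\big(\mu_{p,q}(B_1)\big)^{-1/p}$, and $\mu_{p,q}(B_1)$ is controlled from below via Lemma \ref{lemma:lower_neumann}: inspecting \eqref{lowerbound_mu} one sees that for $1\le p<2=N$ the bound stays bounded away from $0$ and $\infty$ as $q\nearrow p^*$, because no exponent blows up there (the critical feature $q\to p^*$ only matters for the Sobolev, not Poincar\'e--Wirtinger, side), yielding $0<\lim_{q\nearrow p^*}\Theta_{p,q}<\infty$. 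For $p=2=N$, as $q\nearrow\infty$ the lower bound \eqref{lowerbound_mu} behaves like a constant times $q^{-1}$ (the factor $\big(\tfrac1N-\tfrac1p+\tfrac1q\big)/\big(1-\tfrac1p+\tfrac1q\big)$ raised to the power $p-1+\tfrac pq$ tends to a positive constant, while $|\Omega|^{1-p/N-p/q}$ and the remaining pieces contribute the decay), so $\mu_{2,q}(B_1)^{-1/2}\sim q^{1/2}$ up to constants; squaring (since $\Theta$ enters through $\mathscr C^p$ with $p=2$) gives $\Theta_{2,q}\sim q^{-1}$, i.e. $q\,\Theta_{2,q}$ is bounded above and below by positive constants. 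The main obstacle I expect is precisely this bookkeeping of how every $q$-dependent factor — the extension power $D^{Np/q}$, the normalization $(2d)^{N/q}$, the capacity power $D^{-(p-1)}$, and $\mu_{p,q}(B_1)$ — contributes to $\Theta_{p,q}$, and verifying that their product has the asserted nondegenerate limit; the geometric summation over the grid is routine once the fatness lemma and Lemma \ref{cap_length} are in hand.
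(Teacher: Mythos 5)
Your Part 1 is essentially the paper's own argument: the same tiling by squares of side $10(\lfloor\sqrt{k}\rfloor+1)\,r_\Omega$, Taylor's fatness Lemma \ref{taylor_lemma} on each square, Theorem \ref{thm:mazya_poincare} with a fixed ratio $D/d$, the capacity--projection bound of Lemma \ref{cap_length}, and the subadditivity $\sum_Q\|u\|_{L^q(Q)}^p\ge\|u\|_{L^q(\Omega)}^p$ for $q\ge p$; your exponent bookkeeping ($-\frac{2p}{q}-(p-1)+1=-(p-2+\frac{2p}{q})$ for $N=2$) is correct, so the inequality \eqref{makai-hayman-stima-2} is established exactly as in the paper.

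The gap is in the asymptotics of $\Theta_{p,q}$, and it is genuine in the case $1\le p<2$. You propose to control $\mu_{p,q}(B_1)$ as $q\nearrow p^*$ through Lemma \ref{lemma:lower_neumann}, asserting that ``no exponent blows up there'' and that the criticality of $q\to p^*$ concerns only the Sobolev, not the Poincar\'e--Wirtinger, side. This is exactly backwards: in \eqref{lowerbound_mu} the numerator $\frac1N-\frac1p+\frac1q$ tends to $\frac1N-\frac1p+\frac1{p^*}=0$ as $q\nearrow p^*$ (here $N=2$), so the elementary lower bound on $\mu_{p,q}(B_1)$ degenerates precisely at the critical exponent and cannot yield $\liminf_{q\nearrow p^*}\Theta_{p,q}>0$. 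Some finer input is indispensable: the paper uses that $q\mapsto\mu_{p,q}(B_1)$ converges, as $q\nearrow p^*$, to $\mu_{p,p^*}(B_1)>0$ (quoting \cite[Lemma 1.2]{BNTapp}, with a quantitative lower bound from \cite{CW}); alternatively one could argue that $\mu_{p,p^*}(B_1)>0$ by the Sobolev embedding on the ball and pass to nearby $q$ by H\"older, but either way you need more than Lemma \ref{lemma:lower_neumann}.

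For $p=2$ your computation that \eqref{lowerbound_mu} behaves like $c/q$ is correct, and this does give $\liminf_{q\to\infty} q\,\Theta_{2,q}>0$. However, the other half of the claim, $\limsup_{q\to\infty} q\,\Theta_{2,q}<+\infty$, requires an \emph{upper} bound $\mu_{2,q}(B_1)\le C/q$ (since $\mathscr{C}$ is increasing in $\mu_{2,q}(B_1)$), which a one-sided estimate like \eqref{lowerbound_mu} cannot deliver; your assertion ``$\mu_{2,q}(B_1)^{-1/2}\sim q^{1/2}$'' silently assumes it. The paper obtains both directions from the two-sided asymptotics $4\pi e\le\liminf_{q\to\infty} q\,\mu_{2,q}(B_1)\le\limsup_{q\to\infty} q\,\mu_{2,q}(B_1)\le 8\pi e$ of \cite[Proposition 1.5]{BNTapp}. (One could also bound the constructed constant from above by testing \eqref{makai-hayman-stima-2} on $\Omega=B_1$, but then the decay of $q\,\lambda_{2,q}(B_1)$ must itself be justified, e.g. via the Moser--Trudinger/Ren--Wei asymptotics.)
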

\begin{proof}
We first prove inequality \eqref{makai-hayman-stima-2}, then by using the explicit expression of the constant $\Theta_{p,q}$, we will prove the second part of the statement.
\vskip.2cm\noindent
{\bf Part 1: inequality}.
   Up to a scaling, we can suppose that $r_\Omega = 1$. We take  $\delta = \lfloor \sqrt{k}\rfloor + 1 \in \mathbb{N}$ and consider the family of squares 
   \[
   Q_{ij} := Q_{5\delta}(10\, \delta\, i, 10\, \delta\, j), \qquad \mbox{ for every } (i,j) \in \mathbb{Z}^2.
   \]
We introduce the set of indices 
\[
\mathbb{Z}^2_\Omega = \{(i,j) \in \mathbb{Z}^2 \, :\, Q_{ij} \cap \Omega \neq \emptyset \},
\] 
and for every $(i,j)\in\mathbb{Z}^2_\Omega$ we take $\Sigma_{ij} \subseteq \overline{Q_{ij}}\setminus \Omega$ to be the compact set provided by Lemma \ref{taylor_lemma}. Let $u \in C^{\infty}_0(\Omega)$, then by Theorem \ref{thm:mazya_poincare} with $d=5\,\delta$ and $D = 2\,d=10\,\delta$, we have
   \[
\begin{split}
	\int_{\Omega} |\nabla u|^p dx &= \sum_{(i,j) \in \mathbb{Z}^2} \int_{Q_{ij}} |\nabla u|^p dx \geq \frac{\mathscr{C}^p}{(5\,\delta)^{\frac{2\,p}{q}}}\, \sum_{(i,j) \in \mathbb{Z}^2_{\Omega}} \mathrm{cap}_p(\Sigma_{ij}; \widetilde{B}_{ij})\, \|u\|_{L^q(Q_{ij})}^p,  
	\end{split}
\]
where we denoted with $\widetilde{B}_{ij}$ the ball with radius $D=2\,d=10\,\delta$, concentric with $Q_{ij}$. The key point now is to give a uniform bound from below on the capacity of the sets $\Sigma_{ij}$: by relying on Lemma \ref{cap_length} and Lemma \ref{taylor_lemma}, we can infer
\[
\mathrm{cap}_p(\Sigma_{ij};\widetilde{B}_{ij})\ge \frac{2}{(10\,\delta)^{p-1}}\,\max\Big\{\mathcal{H}^1(\Pi_{\mathbf{e}_1}(\Sigma_{ij})), \mathcal{H}^1(\Pi_{\mathbf{e}_2}(\Sigma_{ij}))\Big\}\ge \frac{\sqrt{k}}{2\cdot (10\,\delta)^{p-1}}.
\]
By collecting these estimates, we get
\[
\int_{\Omega} |\nabla u|^p dx\ge \frac{\mathscr{C}^p\,\sqrt{k}}{2^p\cdot (5\,\delta)^{p-1+\frac{2\,p}{q}}}\,\sum_{(i,j) \in \mathbb{Z}^2_{\Omega}} \|u\|_{L^q(Q_{ij})}^p.
\]
 and $\mathscr{C}$ is the same constant as in Theorem \ref{thm:mazya_poincare}. Since $k \geq 1$, we have 
\[
\frac{\sqrt{k}}{\delta^{p-1+\frac{2\,p}{q}}}=\frac{\sqrt{k}}{( \lfloor \sqrt{k}\rfloor + 1)^{p-1+\frac{2\,p}{q}}}\ge \frac{\sqrt{k}}{(2\,\sqrt{k})^{p-1+\frac{2\,p}{q}}}=\frac{1}{2^{p-1+\frac{2\,p}{q}}}\,\left(\frac{1}{\sqrt{k}}\right)^{p-2+\frac{2\,p}{q}}.
\]
In order to conclude the proof, we are only left to observe that $q\ge p$, thus the power $\tau\mapsto \tau^{p/q}$ is sub-additive. This entails that\footnote{In the limit case $q=\infty$, we just use that
\[
\sum_{(i,j) \in \mathbb{Z}^2_{\Omega}} \|u\|_{L^\infty(Q_{ij})}^p\ge \|u\|_{L^\infty(\Omega)}^p.
\]} 
\begin{equation}
\label{subadditive}
\sum_{(i,j) \in \mathbb{Z}^2_{\Omega}} \|u\|_{L^q(Q_{ij})}^p \geq \left(\sum_{(i,j) \in \mathbb{Z}^2_{\Omega}} \|u\|_{L^q(Q_{ij})}^q\right)^{\frac{p}{q}} = \|u\|_{L^q(\Omega)}^p,
\end{equation}
Then, we get 
\[
\int_{\Omega} |\nabla u|^p dx\ge  \frac{\mathscr{C}^{p}}{2^p \cdot 10^{p-1+\frac{2\,p}{q}}}\,\left(\frac{1}{\sqrt{k}}\right)^{p-2+\frac{2\,p}{q}}\, \|u\|_{L^q(\Omega)}^p,
\]
and \eqref{makai-hayman-stima-2} follows by definition of $\lambda_{p,q}(\Omega)$.
\vskip.2cm\noindent
{\bf Part 2: asymptotics for $\Theta_{p,q}$}. In {\bf Part 1} we have obtained the following constant
\[
\Theta_{p,q}=\frac{\mathscr{C}^{p}}{2^p \cdot 10^{p-1+\frac{2\,p}{q}}},
\]
with $\mathscr{C}$ as in Theorem \ref{thm:mazya_poincare}. Thus, in order to understand the asymptotic behaviour of $\Theta_{p,q}$ as $q$ goes to $p^*$ or to $\infty$, it is sufficient to focus on the same issue for the constant $\mathscr{C}^p$. By Remark \ref{remark:costante} and taking $N=2$, $d/D=1/2$, this is given by
\[
\mathscr{C}=\frac{1}{\alpha_{2,p}}\, \left(\frac{1}{2}\right)^{\frac{8}{p}+\frac{2}{q}}\,\left[\pi^\frac{1}{q}+\frac{4\,\pi^\frac{1}{p}}{1-\dfrac{\sqrt{2}}{2}}\,\left(\frac{1}{\mu_{p,q}(B_1)}\right)^\frac{1}{p}  \right]^{-1}.
\]
For $1\le p<2$, we have that (see \cite[Lemma 1.2]{BNTapp}) 
\[
\lim_{q\nearrow p^*} \mu_{p,q}(B_1)=\mu_{p,p^*}(B_1)>0.
\]
For a lower bound on the last constant, see for example \cite[Proposition 3.1]{CW}.
\par
The case $p=2$ is slightly more delicate. In this case, we have 
\[
\lim_{q\nearrow \infty} \mu_{2,q}(B_1)=0.
\]
More precisely, one can prove that
\[
4\,\pi\,e\le \liminf_{q\to \infty} \big(q\,\mu_{2,q}(B_1)\big)\le \limsup_{q\to \infty} \big(q\,\mu_{2,q}(B_1)\big)\le 8\,\pi\,e,
\]
see \cite[Proposition 1.5]{BNTapp}.
In light of the expression of $\mathscr{C}$, this is enough to conclude.
\end{proof}
\begin{remark}[Asymptotic optimality]
\label{rem:optimal}
Let $1\le p\le 2$ and let $p\le q$ satisfy \eqref{esponenti}. By proceeding as in \cite[Theorem 1.2, point (2)]{BB}, we can construct a sequence $\{\Omega_k\}_{k\in\mathbb{N}\setminus\{0\}}\subseteq \mathbb{R}^2$ of open sets such that $\Omega_k$ is multiply connected of order $k$
\[
r_{\Omega_k}\le C\qquad \mbox{ and }\qquad  \limsup_{k\to\infty} k^{\frac{p-2}{2}+\frac{2\,p}{q}}\,\lambda_{p,q}(\Omega_k)<+\infty.
\]
This shows that the lower bound \eqref{makai-hayman-stima-2} is sharp in its dependence on $k$, as $k$ goes to $\infty$. For $p>2$, we will see in the next section that this estimate can be considerably improved, by removing the dependence on $k$. 
\par
We also recall that for $1\le p<2$ we have
\[
\lim_{q\nearrow p^*} \lambda_{p,q}(\Omega)=\lambda_{p,p^*}(\Omega),
\]
and the latter is actually independent of the set $\Omega$: it simply coincides with the sharp constant in the Sobolev inequality for the whole space $\mathbb{R}^2$ (see for example \cite[Chapter I, Section 4.5]{St}). The asymptotic behaviour of the constant $\Theta_{p,q}$ in \eqref{makai-hayman-stima-2} is perfectly consistent with this fact.
\par
Finally, for $p=2$ we have that for a multiply connected planar set with finite inradius, it holds
\[
\lim_{q\nearrow \infty} q\,\lambda_{2,q}(\Omega)=8\,\pi\,e,
\]
see Corollary \ref{coro:MT} below. Thus, here as well, the asymptotic behaviour of the constant $\Theta_{2,q}$ is consistent with this limit.
\end{remark}

\begin{remark}[The case $1\le q<p$] 
\label{counterexample_brasco_ruffini}
We observe that the proof of Theorem \ref{thm:makai-hayman-multiply} does not for work for $q<p$: the main obstruction is the sub-additivity inequality \eqref{subadditive}. This is not a mere technicality:  
in the case $q < p$, {\it inequality \eqref{makai-hayman-stima-2} can not hold}. Indeed, it already fails for convex sets. The typical counter-example is given by the infinite strip $\Omega=\mathbb{R}\times(-1,1)$, for which we have 
\[
r_\Omega=1\qquad \mbox{ and }\qquad \lambda_{p,q}(\Omega)=0, \mbox{ for } 1\le q<p.	
\]
We refer to \cite[Proposition 6.1]{BrPini} for more details.
\end{remark}	
\subsection{Embeddings for homogeneous spaces}
In this subsection, we briefly discuss some consequences of Theorem \ref{thm:makai-hayman-multiply} on the embedding properties of the homogeneous Sobolev space $\mathscr{D}^{1,p}_0$. We recall that the latter is the completion of $C^\infty_0(\Omega)$, with respect to the norm
\[
\varphi\mapsto \|\nabla\varphi\|_{L^p(\Omega)},\qquad \mbox{ for every } \varphi\in C^\infty_0(\Omega).
\]
\begin{corollary}
\label{coro:immersione}
Let $k\in\mathbb{N}\setminus\{0\}$ and let $\Omega\subseteq\mathbb{R}^2$ be an open multiply connected set of order $k$. Let $1\le p\le 2$ and let $p\le q$ satisfy \eqref{esponenti}. Then we have
\[
\mathscr{D}^{1,p}_0(\Omega)\hookrightarrow L^q(\Omega)\qquad \Longleftrightarrow \qquad r_\Omega<+\infty.
\]
\end{corollary}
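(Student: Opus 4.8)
The plan is to prove the equivalence by establishing the two implications separately, using Theorem \ref{thm:makai-hayman-multiply} for the nontrivial direction and the standard upper bound \eqref{minchions} (adapted to $\lambda_{p,q}$) for the other.

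\emph{Sufficiency} ($r_\Omega<+\infty \Rightarrow \mathscr{D}^{1,p}_0(\Omega)\hookrightarrow L^q(\Omega)$). First I would observe that the chain of exponents in \eqref{esponenti} is exactly the one required by Theorem \ref{thm:makai-hayman-multiply}, so that theorem applies verbatim: since $\Omega$ is multiply connected of order $k$ and has finite inradius, we get $\lambda_{p,q}(\Omega)\geq \Theta_{p,q}\,(\sqrt{k}\,r_\Omega)^{-(p-2+2p/q)}>0$. By definition of $\lambda_{p,q}$, this means precisely that
\[
\|u\|_{L^q(\Omega)}\leq \left(\frac{1}{\lambda_{p,q}(\Omega)}\right)^{1/p}\|\nabla u\|_{L^p(\Omega)},\qquad u\in C^\infty_0(\Omega),
\]
and since $\mathscr{D}^{1,p}_0(\Omega)$ is the completion of $C^\infty_0(\Omega)$ under the gradient norm, this inequality passes to the whole space by density (here one should recall, as noted in the introduction via \cite{BPZ2}, that $\lambda_{p,q}(\Omega)>0$ guarantees the elements of $\mathscr{D}^{1,p}_0(\Omega)$ are genuine $L^q_{\mathrm{loc}}$ functions, so the limit identification is unambiguous). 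Hence the embedding holds, with operator norm controlled by $\lambda_{p,q}(\Omega)^{-1/p}$.

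\emph{Necessity} ($\mathscr{D}^{1,p}_0(\Omega)\hookrightarrow L^q(\Omega) \Rightarrow r_\Omega<+\infty$). I would argue by contraposition. If $r_\Omega=+\infty$, then $\Omega$ contains balls $B_R(x_R)$ of arbitrarily large radius $R$. Using monotonicity of $\lambda_{p,q}$ with respect to inclusion together with its scaling, $\lambda_{p,q}(\Omega)\leq \lambda_{p,q}(B_R(x_R))=R^{-(p-2+2p/q)}\,\lambda_{p,q}(B_1)$, which is the $\lambda_{p,q}$-analogue of \eqref{minchions}; one checks the scaling exponent by the substitution $u\mapsto u((\cdot-x_R)/R)$. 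Since $p-2+2p/q>0$ under \eqref{esponenti} (indeed $q\le p^*=2p/(2-p)$ forces $2p/q\ge 2-p$ when $p<2$, and for $p=2$ one has $2p/q=4/q>0$), letting $R\to\infty$ gives $\lambda_{p,q}(\Omega)=0$, so the embedding fails. This yields the contrapositive.

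The only mild subtlety — and the closest thing to an obstacle — is making sure the positivity $\lambda_{p,q}(\Omega)>0$ really is equivalent to the boundedness of the embedding operator $\mathscr{D}^{1,p}_0(\Omega)\hookrightarrow L^q(\Omega)$ and not merely of its restriction to $C^\infty_0(\Omega)$; this is exactly the density/completion point recalled in the introduction (citing \cite[Proposition 2.4]{BPZ2} and \cite[Lemma 2.6]{BPZ2}), so no new work is needed. Everything else is a direct application of Theorem \ref{thm:makai-hayman-multiply} in one direction and the elementary inradius upper bound in the other, so the corollary is essentially a repackaging of the main theorem.
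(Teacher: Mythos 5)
Your proof is correct and follows essentially the same route as the paper: the sufficiency is a direct consequence of Theorem \ref{thm:makai-hayman-multiply} via the equivalence between $\lambda_{p,q}(\Omega)>0$ and the embedding, and the necessity uses monotonicity and scaling of $\lambda_{p,q}$ on balls contained in $\Omega$ (the paper phrases this by taking the supremum over inscribed disks to get $\lambda_{p,q}(\Omega)\le \lambda_{p,q}(B_1)/r_\Omega^{p-2+2p/q}$, which is the same estimate as your contrapositive). Your extra checks (strict positivity of the exponent $p-2+2p/q$ under \eqref{esponenti} and the density/completion point) are fine and only make explicit what the paper leaves implicit.
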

\begin{proof}
The validity of the continuous embedding $\mathscr{D}^{1,p}_0(\Omega)\hookrightarrow L^q(\Omega)$ is equivalent to the fact that $\lambda_{p,q}(\Omega)>0$. Thus, the implication $\Longleftarrow$ is a direct consequence of \eqref{makai-hayman-stima-2}. For the converse implication, it is sufficient to observe that for every disk $B_r(x_0)\subseteq \Omega$, we have 
\[
\lambda_{p,q}(\Omega)\le \lambda_{p,q}(B_r(x_0))=\frac{\lambda_{p,q}(B_1)}{r^{p-2+\frac{2\,p}{q}}}.
\]
By taking the supremum over the disks contained in $\Omega$, we get
\[
\lambda_{p,q}(\Omega)\le \frac{\lambda_{p,q}(B_1)}{r^{p-2+\frac{2\,p}{q}}_\Omega},
\]
and thus the conclusion.
\end{proof}
We now focus on the case $p=2$. In this case, there is no limit Sobolev exponent, i.e. the exponent $q$ may become arbitrary large, but it can not attain $\infty$. In general, the limit embedding for $\mathscr{D}^{1,2}_0(\Omega)$ is on the scale of Orlicz spaces of exponential type. For example, for open planar sets with finite area, the Moser-Trudinger inequality asserts that 
\[
\sup_{u\in C^\infty_0(\Omega)} \left\{\int_\Omega \big(\exp(4\,\pi\,u^2)-1\big)\,dx\, :\, \int_\Omega |\nabla u|^2\,dx= 1\right\}<+\infty,
\]
see \cite[Theorem 1]{Mos}.
In \cite[Theorem 1.2]{MS}, the authors proved that for an open simply connected set $\Omega\subseteq\mathbb{R}^2$, we have 
\[
\sup_{u\in C^\infty_0(\Omega)} \left\{\int_\Omega \big(\exp(4\,\pi\,u^2)-1\big)\,dx\, :\, \int_\Omega |\nabla u|^2\,dx= 1\right\}<+\infty\qquad \Longleftrightarrow\qquad r_\Omega<+\infty.
\]
In the next result, we extend this characterization to planar sets with non-trivial topology.
\begin{corollary}[Moser-Trundinger]
\label{coro:MT}
Let $k\in\mathbb{N}\setminus\{0\}$ and let $\Omega\subseteq\mathbb{R}^2$ be an open multiply connected set of order $k$. Then, we have
\[
\sup_{u\in C^\infty_0(\Omega)} \left\{\int_\Omega \big(\exp(4\,\pi\,u^2)-1\big)\,dx\, :\, \int_\Omega |\nabla u|^2\,dx= 1\right\}<+\infty\qquad\Longleftrightarrow \qquad r_\Omega<+\infty.
\]
Moreover, if $r_\Omega<+\infty$ we have 
\[
\lim_{q\nearrow \infty} q\,\lambda_{2,q}(\Omega)=8\,\pi\,e.
\]
\end{corollary}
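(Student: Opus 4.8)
The plan is to treat the two assertions separately, and in both cases to go beyond Theorem \ref{thm:makai-hayman-multiply}: the constant $\Theta_{2,q}$ produced there is of the right order $q^{-1}$ but far from sharp, so it cannot by itself give finiteness of the Moser--Trudinger supremum, let alone the exact value $8\pi e$. Instead I would combine Moser's original radial test functions with the result of \cite[Theorem 1.2]{MS} for simply connected planar sets.

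\emph{Characterization.} The implication ``finiteness of the supremum $\Rightarrow r_\Omega<+\infty$'' I would get by contraposition: if $r_\Omega=+\infty$, then $\Omega$ contains balls $B_R$ of arbitrarily large radius $R$; extending competitors by zero and using that $v\mapsto\|\nabla v\|_{L^2}$ is scale invariant in the plane, the Moser--Trudinger supremum relative to $\Omega$ is at least $R^2$ times the one relative to $B_1$ (a strictly positive quantity by Moser's theorem \cite{Mos}), hence $+\infty$. For the converse I reduce to a simply connected set: let $C_0$ be the connected component of $(\mathbb{R}^2)^*\setminus\Omega$ containing the point at infinity, and let $\widetilde\Omega$ be the connected component of $(\mathbb{R}^2)^*\setminus C_0$ containing $\Omega$. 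Then $\widetilde\Omega$ is open, connected, simply connected, it contains $\Omega$, and $\widetilde\Omega\setminus\Omega$ is contained in the union of the remaining components of $(\mathbb{R}^2)^*\setminus\Omega$, which are compact; thus $\widetilde\Omega\setminus\Omega\subseteq B_M$ for some $M$. A short geometric argument then gives $r_\Omega<+\infty\Rightarrow r_{\widetilde\Omega}<+\infty$: any ball $B_R(y)\subseteq\widetilde\Omega$, after removing $B_M$, still contains a ball of radius at least $R/2-M$, and that smaller ball lies in $\Omega$. Now \cite[Theorem 1.2]{MS} yields finiteness of the Moser--Trudinger supremum on $\widetilde\Omega$, and monotonicity under inclusion (extension by zero) transfers it to $\Omega$.

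\emph{The limit.} For the upper bound $\limsup_{q\to\infty}q\,\lambda_{2,q}(\Omega)\le 8\pi e$ I would pick a ball $B_{r_0}(x_0)\subseteq\Omega$, so that $\lambda_{2,q}(\Omega)\le\lambda_{2,q}(B_{r_0}(x_0))=r_0^{-4/q}\lambda_{2,q}(B_1)$, and reduce to estimating $\lambda_{2,q}(B_1)$ from above. For $q=2n$ I test with the radial (Lipschitz, hence admissible) function obtained, through Moser's change of variables $|x|=e^{-s}$, from the profile on $[0,+\infty)$ that grows linearly on $[0,n/2]$ and is constant afterwards: it has unit Dirichlet energy, and keeping only its constant part gives $\|u\|_{L^{2n}(B_1)}^{2n}\ge \pi\,n^n(4\pi)^{-n}e^{-n}$, i.e. $\|u\|_{L^{2n}(B_1)}^2\ge \pi^{1/n}n/(4\pi e)$, so $2n\,\lambda_{2,2n}(B_1)\le 8\pi e\,\pi^{-1/n}\to 8\pi e$; the passage to all real $q$ is a routine H\"older interpolation on $B_1$. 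For the lower bound $\liminf_{q\to\infty}q\,\lambda_{2,q}(\Omega)\ge 8\pi e$, let $M_\Omega<+\infty$ be the (now finite) Moser--Trudinger supremum of $\Omega$; expanding the exponential, every $u\in C^\infty_0(\Omega)$ with $\|\nabla u\|_{L^2}=1$ satisfies $\tfrac{(4\pi)^n}{n!}\|u\|_{L^{2n}(\Omega)}^{2n}\le M_\Omega$ for all $n$, hence $\lambda_{2,2n}(\Omega)\ge 4\pi(n!\,M_\Omega)^{-1/n}$; for general $q$, with $n=\lfloor q/2\rfloor$, the pointwise bound $|u|^q\le|u|^{2n}+|u|^{2n+2}$ gives $\|u\|_{L^q(\Omega)}^q\le M_\Omega\,\tfrac{n!}{(4\pi)^n}\big(1+\tfrac{n+1}{4\pi}\big)$, so $\lambda_{2,q}(\Omega)\ge\big(M_\Omega\,\tfrac{n!}{(4\pi)^n}(1+\tfrac{n+1}{4\pi})\big)^{-2/q}$. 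Multiplying by $q$ and using Stirling's formula $(n!)^{1/n}=(n/e)(1+o(1))$, the right-hand side tends to $8\pi e$, which gives the lower bound; combined with the upper bound, $\lim_{q\to\infty}q\,\lambda_{2,q}(\Omega)=8\pi e$.

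\emph{Main obstacle.} I expect two genuinely delicate points. First, the metric-topological lemma that filling the bounded holes of $\Omega$ preserves a finite inradius: this is what legitimizes the reduction to \cite{MS}, and although intuitively clear it needs care to exclude thin tentacles running off to infinity. Second, pinning down the exact constant $8\pi e$: the upper bound requires the precise Moser competitor (a crude test function only recovers the right power of $q$, not the constant), and the matching lower bound must link the sharp Moser--Trudinger exponent $4\pi$ to the asymptotics of $\lambda_{2,q}$, with the extra technical nuisance of covering non-integer exponents $q$ without any finiteness assumption on $|\Omega|$.
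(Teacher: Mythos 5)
Your proposal is correct, but its first half follows a genuinely different route from the paper. For the equivalence, the paper simply invokes \cite[Theorem 2.2]{BM}, which characterizes finiteness of the Moser--Trudinger supremum for a connected planar open set in terms of positivity of $\lambda(\Omega)$, and then identifies this last condition with $r_\Omega<+\infty$ via Corollary \ref{coro:immersione} (i.e. Theorem \ref{thm:makai-hayman-multiply} with $p=q=2$); two lines in total. You instead bypass both of these: you fill the $k-1$ bounded components of the complement to get a simply connected hull $\widetilde\Omega\supseteq\Omega$ with $\widetilde\Omega\setminus\Omega\subseteq B_M$, check $r_{\widetilde\Omega}\le 2\,(r_\Omega+M)$ by the ball-shifting argument (which is sound: the ball $B_{R/2-M}(z)$ you produce lies in $\widetilde\Omega\setminus B_M\subseteq\Omega$), and then import \cite[Theorem 1.2]{MS} together with monotonicity of the supremum under extension by zero; the reverse implication by dilating Moser competitors inside arbitrarily large balls is also correct and plays the role of the paper's monotonicity of $\lambda$. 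Your route has the merit of being independent of Theorem \ref{thm:makai-hayman-multiply}, at the price of the topological hull lemma; the paper's route is shorter and deliberately reuses its own main estimate, which is the point of stating the corollary there. For the limit $\lim_{q\nearrow\infty} q\,\lambda_{2,q}(\Omega)=8\,\pi\,e$ the two proofs coincide in substance: the paper says one reproduces verbatim \cite[Lemma 2.2]{RW} once the Moser--Trudinger inequality on $\Omega$ is available, and what you write (Moser's truncated-logarithm test function on an inscribed disc for the upper bound, term-by-term expansion of the exponential plus Stirling for the lower bound, with the elementary bound $|u|^q\le |u|^{2n}+|u|^{2n+2}$ to cover non-even $q$ and no use of $|\Omega|<+\infty$) is exactly that argument, spelled out; note that the sharp exponent $4\,\pi$ inherited from \cite{MS} through your inclusion argument is what makes the constant come out as $8\,\pi\,e$ rather than a smaller value.
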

\begin{proof}
According to \cite[Theorem 2.2]{BM}, for an open connected set $\Omega\subseteq\mathbb{R}^2$ we have that 
\[
\sup_{u\in C^\infty_0(\Omega)} \left\{\int_\Omega \big(\exp(4\,\pi\,u^2)-1\big)\,dx\, :\, \int_\Omega |\nabla u|^2\,dx= 1\right\}<+\infty\qquad\Longleftrightarrow \qquad \lambda(\Omega)<+\infty.
\]
If $\Omega$ is multiply connected of order $k$, the last condition is equivalent to $r_\Omega<+\infty$, thanks to Corollary \ref{coro:immersione} with $p=q=2$.
\par
The second statement now follows by reproducing verbatim the argument of \cite[Lemma 2.2]{RW}: the first part of the proof assures that we have the Moser-Trudinger inequality at our disposal, which is sufficient to reproduce the argument in \cite{RW}.
\end{proof}


\section{The case $p > N$}
\label{sec:4}

\subsection{Punctured Poincar\'e constants}
Let $p>N\ge 1$ and let $K\subseteq\mathbb{R}^N$ be an open bounded convex set. For every $x_0\in\overline K$, we define the following Poincar\'e constants
\[
\Lambda_p(K\setminus\{x_0\})=\inf_{u\in \mathrm{Lip}(\overline K)} \left\{\int_K |\nabla u|^p\,dx\, :\, \| u \|_{L^p(K)}=1,\, u(x_0)=0\right\},
\]
and
\[
\Lambda_{p, \infty}(K\setminus\{x_0\}) = \inf_{u\in \mathrm{Lip}(\overline K)} \left\{ \int_{K} |\nabla u|^p\, dx \, : \, \| u \|_{L^\infty(K)} = 1, u(x_0)=0 \right\}.
\]
We observe that in the particular case $K=B_R(x_0)$, we have
\begin{equation}
\label{scala}
\Lambda_p(B_R(x_0)\setminus\{x_0\})=\frac{\Lambda_p(B_1\setminus\{0\})}{R^p}\quad \mbox{ and }\quad \Lambda_{p,\infty}(B_R(x_0)\setminus\{x_0\})=\frac{\Lambda_{p,\infty}(B_1\setminus\{0\})}{R^{p-N}}.
\end{equation}
The following simple result is instrumental to get a lower bound on this constant.
\begin{lemma}
	\label{lm:lowerboundo}
	Let $1\le N<p$ and $R>0$. For every $u\in \mathrm{Lip}([0,R])\setminus\{0\}$ such that $u(0)=0$, we have
	\[
	\frac{\Lambda_p(B_1\setminus\{0\})}{R^p}\,\int_0^R |u(t)|^p\,t^{N-1}\,dt\le \int_0^R |u'(t)|^p\,t^{N-1}\,dt.
	\] 
\end{lemma}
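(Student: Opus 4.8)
The plan is to reduce this one-dimensional weighted inequality to the very definition of $\Lambda_p(B_R\setminus\{0\})$, by passing to radial functions on the ball $B_R:=B_R(0)\subseteq\mathbb{R}^N$. Given $u\in\mathrm{Lip}([0,R])$ with $u(0)=0$, I would first set $v(x):=u(|x|)$ for $x\in\overline{B_R}$ and check that $v\in\mathrm{Lip}(\overline{B_R})$: if $L$ is a Lipschitz constant for $u$, then $|v(x)-v(y)|=|u(|x|)-u(|y|)|\le L\big||x|-|y|\big|\le L|x-y|$, so $v$ is Lipschitz with the same constant; moreover $v(0)=u(0)=0$, so $v$ is admissible in the definition of $\Lambda_p(B_R\setminus\{0\})$ (after the normalization $\|v\|_{L^p(B_R)}=1$, or simply by homogeneity of the Rayleigh quotient — which is unchanged if $u\equiv 0$ is excluded, as in the hypothesis).

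Next I would compute the two integrals in polar coordinates. Since $v$ is radial, $\nabla v(x)=u'(|x|)\,\dfrac{x}{|x|}$ for a.e.\ $x\in B_R$ (the chain rule for the composition of the Lipschitz map $u$ with $x\mapsto|x|$ is legitimate, as $u'$ exists a.e.\ and is bounded), hence $|\nabla v(x)|=|u'(|x|)|$ a.e., and therefore
\[
\int_{B_R}|\nabla v|^p\,dx=N\,\omega_N\int_0^R|u'(t)|^p\,t^{N-1}\,dt,\qquad \int_{B_R}|v|^p\,dx=N\,\omega_N\int_0^R|u(t)|^p\,t^{N-1}\,dt.
\]

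Then, by the definition of $\Lambda_p(B_R\setminus\{0\})$ — equivalently, by the fact that it is the sharp constant in the inequality $\Lambda_p(B_R\setminus\{0\})\,\|w\|_{L^p(B_R)}^p\le\|\nabla w\|_{L^p(B_R)}^p$ over $w\in\mathrm{Lip}(\overline{B_R})$ with $w(0)=0$ — applied to $w=v$ and dividing through by $N\,\omega_N$, I obtain
\[
\Lambda_p(B_R\setminus\{0\})\int_0^R|u(t)|^p\,t^{N-1}\,dt\le\int_0^R|u'(t)|^p\,t^{N-1}\,dt.
\]
Finally I would invoke the scaling identity $\Lambda_p(B_R\setminus\{0\})=\Lambda_p(B_1\setminus\{0\})/R^p$ recorded in \eqref{scala} to reach the claimed bound.

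The only point requiring mild care is the passage to radial functions — checking that $v$ is genuinely Lipschitz up to the origin and that the polar-coordinate formulas hold for a merely Lipschitz profile $u$ — but this is routine, and no real obstacle is expected: the statement is, in essence, just a restatement of the definition of $\Lambda_p(B_1\setminus\{0\})$ written in radial coordinates.
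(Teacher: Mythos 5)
Your argument is correct and is essentially the paper's own proof: define the radial function $U(x)=u(|x|)$ on $B_R$, test it in the definition of $\Lambda_p(B_R\setminus\{0\})$, pass to spherical coordinates, and conclude via the scaling identity \eqref{scala}. The extra checks you include (Lipschitz continuity of the radial extension, the polar-coordinate computation, homogeneity replacing the normalization) are exactly the routine details the paper leaves implicit.
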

\begin{proof}
	For every $u$ as in the statement, we define
	\[
	U(x)=u(|x-x_0|),\qquad \mbox{ for every } x\in B_R(x_0).
	\]
	By definition of $\Lambda_p(B_R(x_0);x_0)$, we have 
	\[
	\Lambda_p(B_R(x_0)\setminus\{x_0\})\,\int_{B_R(x_0)} |U|^p\,dx\le \int_{B_R(x_0)}|\nabla U|^p\,dx.
	\]
	By using spherical coordinates centered at $x_0$ and taking \eqref{scala} into account, we get the desired conclusion.
\end{proof}
We can thus prove the following sharp inequality, which is interesting in itself.
\begin{lemma} \label{lambdone_lemma}
	Let $1\le N<p$ and let $K\subseteq\mathbb{R}^N$ be an open bounded convex set. For every $x_0\in\overline K$, we have 
	\[
	\Lambda_p(K\setminus\{x_0\})\ge \frac{\Lambda_p(B_1\setminus\{0\})}{D_K(x_0)^p},\qquad \mbox{ where } D_K(x_0)=\max_{y\in\partial K} |x_0-y|.
	\]
	Moreover, we have equality for $K=B_{R}(x_0)$.
\end{lemma}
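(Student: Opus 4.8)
The plan is to reduce the $N$-dimensional statement to the one-dimensional inequality of Lemma \ref{lm:lowerboundo}, by slicing $K$ into rays issuing from $x_0$ and integrating in polar coordinates centered at $x_0$. The equality claim is immediate and should be dispatched first: when $K=B_R(x_0)$ one has $D_K(x_0)=R$, and the first identity in \eqref{scala} reads precisely $\Lambda_p(B_R(x_0)\setminus\{x_0\})=\Lambda_p(B_1\setminus\{0\})/R^p$, so the stated lower bound is attained.

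For the inequality, I would fix an admissible competitor $u\in\mathrm{Lip}(\overline K)$ with $u(x_0)=0$ and $\|u\|_{L^p(K)}=1$, and exploit that a bounded open convex set is star-shaped with respect to $x_0$. Thus for each $\omega\in\mathbb{S}^{N-1}$ the ray $\{x_0+t\,\omega:t>0\}$ meets $K$ in a (possibly empty) segment $0<t<\rho(\omega)$, and since the exit point $x_0+\rho(\omega)\,\omega$ lies on $\partial K$ one gets $\rho(\omega)\le D_K(x_0)$. Setting $g_\omega(t)=u(x_0+t\,\omega)$ on $[0,\rho(\omega)]$, this is a Lipschitz function with $g_\omega(0)=u(x_0)=0$, and for a.e. $t$ it satisfies $g_\omega'(t)=\nabla u(x_0+t\,\omega)\cdot\omega$, whence $|g_\omega'(t)|\le|\nabla u(x_0+t\,\omega)|$. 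Applying Lemma \ref{lm:lowerboundo} on $[0,\rho(\omega)]$ and using $\rho(\omega)\le D_K(x_0)$ together with $\Lambda_p(B_1\setminus\{0\})\ge 0$ and the monotonicity of $R\mapsto R^{-p}$, one obtains for each $\omega$
\[
\frac{\Lambda_p(B_1\setminus\{0\})}{D_K(x_0)^p}\int_0^{\rho(\omega)} |g_\omega(t)|^p\,t^{N-1}\,dt
\le \int_0^{\rho(\omega)} |g_\omega'(t)|^p\,t^{N-1}\,dt
\le \int_0^{\rho(\omega)} |\nabla u(x_0+t\,\omega)|^p\,t^{N-1}\,dt .
\]
Integrating this over $\omega\in\mathbb{S}^{N-1}$ with respect to $\mathcal{H}^{N-1}$ and recognizing both sides as polar-coordinate representations of integrals over $K$ gives
\[
\frac{\Lambda_p(B_1\setminus\{0\})}{D_K(x_0)^p}\int_K |u|^p\,dx \le \int_K |\nabla u|^p\,dx ,
\]
and taking the infimum over all such $u$ yields $\Lambda_p(K\setminus\{x_0\})\ge \Lambda_p(B_1\setminus\{0\})/D_K(x_0)^p$.

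The routine parts are the polar coordinate formula for the star-shaped set $K$ and the bound $\rho(\omega)\le D_K(x_0)$, which are elementary. The only genuinely technical point is justifying the chain rule $g_\omega'(t)=\nabla u(x_0+t\,\omega)\cdot\omega$ for a.e. $t$ and a.e. $\omega$: this follows from the a.e. differentiability of the Lipschitz function $u$ combined with a standard Fubini-type argument in polar coordinates. One should also note that the degenerate case $x_0\in\partial K$ causes no trouble, since then $\rho(\omega)=0$ on a set of directions and the corresponding slices simply contribute nothing to either side; all remaining directions are handled exactly as above, with $\rho(\omega)\le D_K(x_0)$ still in force.
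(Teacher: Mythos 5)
Your proposal is correct and follows essentially the same route as the paper: slicing in spherical coordinates centered at $x_0$, dominating the radial derivative by $|\nabla u|$, applying Lemma \ref{lm:lowerboundo} on each ray of length $r(\omega)\le D_K(x_0)$, and noting that the equality case for $K=B_R(x_0)$ is immediate from \eqref{scala}. The extra care you take with the chain rule along rays and the case $x_0\in\partial K$ is fine but not a departure from the paper's argument.
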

\begin{proof}
	Let $u$ be an admissible function for the problem which defines $\Lambda_p(K;\{x_0\})$. By using spherical coordinates centered at $x_0$, we get
	\[
	\begin{split}
		\int_K |\nabla u|^p\,dx&=\int_{\mathbb{S}^{N-1}}\int_0^{r(\omega)} \left[\left(\frac{\partial u}{\partial\varrho}\right)^2+\frac{1}{\varrho^2}\,|\nabla_\tau u|^2\right]^\frac{p}{2}\,\varrho^{N-1}\,d\varrho\,d\mathcal{H}^{N-1}(\omega) \\
		&\ge \int_{\mathbb{S}^{N-1}}\int_0^{r(\omega)} \left|\frac{\partial u}{\partial\varrho}\right|^p\,\varrho^{N-1}\,d\varrho\,d\mathcal{H}^{N-1}(\omega). 
	\end{split}
	\]
	By using Lemma \ref{lm:lowerboundo} in the innermost integral, we obtain
	\[
	\int_K |\nabla u|^p\,dx\ge  \Lambda_p(B_1\setminus\{0\})\,\int_{\mathbb{S}^{N-1}}\frac{1}{r(\omega)^p}\,\int_0^{r(\omega)} |u|^p\,\varrho^{N-1}\,d\varrho\,d\mathcal{H}^{N-1}(\omega).
	\]
	Finally, by noticing that 
	\[
	r(\omega)\le R_K(x_0),\qquad \mbox{ for every }\omega\in\mathbb{S}^{N-1},
	\]
	we get the desired conclusion.
\end{proof}

The following estimate on the quantities $\Lambda_p$ and $\Lambda_{p,\infty}$ will be useful in the sequel, in the particular case $K=B_1$ and $x_0=0$.

\begin{lemma}  
\label{lemma:Lambda_cap}
	Let $1\le N<p$ and let $K\subseteq\mathbb{R}^N$ be an open bounded convex set. For every $x_0\in\overline K$, we have the following estimates
	\begin{equation} 
	\label{porconi}
	|K|\,\Lambda_p(K\setminus\{x_0\}) \geq \Lambda_{p,\infty}(K\setminus\{x_0\})\ge \frac{\mu_{p,\infty}(K)}{2^p},
	\end{equation}
where we recall that $\mu_{p,\infty}(K)$ is defined in \eqref{muuu}.	
	Moreover, we have 
	\[
	\lim_{p\to\infty} \Big(\Lambda_p(K\setminus\{x_0\})\Big)^\frac{1}{p}=\lim_{p\to\infty} \Big(\Lambda_{p,\infty}(K\setminus\{x_0\})\Big)^\frac{1}{p}=\frac{1}{D_K(x_0)},
	\]
where as above $D_K(x_0)=\max_{x\in \partial K} |x-x_0|$.
\end{lemma}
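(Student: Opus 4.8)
The plan is to dispatch the two inequalities in \eqref{porconi} by elementary normalizations, and to establish the asymptotics by combining a single explicit test function (for the upper bound on the $p$-th roots) with a compactness argument that crucially exploits $p>N$ (for the matching lower bound).

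For \eqref{porconi}: given $v\in\mathrm{Lip}(\overline K)$ admissible for $\Lambda_p(K\setminus\{x_0\})$, i.e.\ $\|v\|_{L^p(K)}=1$ and $v(x_0)=0$, one has $\|v\|_{L^\infty(K)}\ge |K|^{-1/p}$, so $v/\|v\|_{L^\infty(K)}$ is admissible for $\Lambda_{p,\infty}(K\setminus\{x_0\})$ and hence $\Lambda_{p,\infty}(K\setminus\{x_0\})\le \|v\|_{L^\infty(K)}^{-p}\int_K|\nabla v|^p\,dx\le |K|\int_K|\nabla v|^p\,dx$; taking the infimum over $v$ gives $|K|\,\Lambda_p(K\setminus\{x_0\})\ge\Lambda_{p,\infty}(K\setminus\{x_0\})$. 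For the second inequality, let $u\in\mathrm{Lip}(\overline K)$ be admissible for $\Lambda_{p,\infty}(K\setminus\{x_0\})$; by Remark \ref{rem:uniquemean}, $\min_{t\in\mathbb{R}}\|u-t\|_{L^\infty(K)}=\frac{1}{2}\big(\sup_K u-\inf_K u\big)$, and since $u(x_0)=0$ forces $\sup_K u\ge 0\ge\inf_K u$ while $\max\{\sup_K u,-\inf_K u\}=\|u\|_{L^\infty(K)}=1$, we get $\sup_K u-\inf_K u\ge 1$, i.e.\ $\min_t\|u-t\|_{L^\infty(K)}\ge 1/2$; plugging the non-constant function $u$ into \eqref{muuu} yields $\mu_{p,\infty}(K)\le 2^p\int_K|\nabla u|^p\,dx$, and infimizing over $u$ gives $\Lambda_{p,\infty}(K\setminus\{x_0\})\ge\mu_{p,\infty}(K)/2^p$.

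Write $D:=D_K(x_0)$. For the upper bounds I use the test function $u_0(x)=|x-x_0|$: by convexity $\max_{\overline K}|x-x_0|=D$, so $u_0/D$ is admissible for $\Lambda_{p,\infty}(K\setminus\{x_0\})$, whence $\Lambda_{p,\infty}(K\setminus\{x_0\})\le |K|\,D^{-p}$ and $\limsup_{p}\Lambda_{p,\infty}(K\setminus\{x_0\})^{1/p}\le 1/D$; normalizing $u_0$ in $L^p(K)$ instead, $\Lambda_p(K\setminus\{x_0\})\le |K|\,\big\|\,|x-x_0|\,\big\|_{L^p(K)}^{-p}$, and since $\big\|\,|x-x_0|\,\big\|_{L^p(K)}\to D$ we also get $\limsup_p\Lambda_p(K\setminus\{x_0\})^{1/p}\le 1/D$. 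For the matching lower bounds I argue by compactness. Fix $p_n\to\infty$ and choose $u_n$ (resp.\ $v_n$) with $\|u_n\|_{L^\infty(K)}=1$, $u_n(x_0)=0$, $\|\nabla u_n\|_{L^{p_n}(K)}^{p_n}\le 2\,\Lambda_{p_n,\infty}(K\setminus\{x_0\})$ (resp.\ $\|v_n\|_{L^{p_n}(K)}=1$, $v_n(x_0)=0$, $\|\nabla v_n\|_{L^{p_n}(K)}^{p_n}\le 2\,\Lambda_{p_n}(K\setminus\{x_0\})$); passing to a subsequence, $\|\nabla u_n\|_{L^{p_n}(K)}\to L:=\liminf_n\Lambda_{p_n,\infty}(K\setminus\{x_0\})^{1/p_n}$ (and $\to L'$ for $v_n$, with $L,L'\le 1/D$ by the upper bounds). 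By Hölder's inequality $\{u_n\}$ and $\{v_n\}$ are bounded in $W^{1,q}(K)$ for every fixed $q\in(N,\infty)$, so by the compact Morrey embedding $W^{1,q}(K)\hookrightarrow\hookrightarrow C(\overline K)$ (available since $K$ is bounded and convex) a further subsequence converges uniformly on $\overline K$ to some $u_*$ (resp.\ $v_*$), with $u_*(x_0)=v_*(x_0)=0$ and $\|u_*\|_{L^\infty(K)}=1$. Weak lower semicontinuity of $w\mapsto\|\nabla w\|_{L^q(K)}$ along $W^{1,q}$-weak convergence, followed by $q\to\infty$, gives $\|\nabla u_*\|_{L^\infty(K)}\le L$ (resp.\ $\le L'$); since $K$ is convex, $u_*$ is then $L$-Lipschitz on $\overline K$, so $1=\|u_*\|_{L^\infty(K)}=\sup_{\overline K}|u_*-u_*(x_0)|\le L\sup_{\overline K}|x-x_0|=L\,D$, i.e.\ $L\ge 1/D$. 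For $v_*$, $|v_*|\le L'|x-x_0|\le L'D$ on $\overline K$, and combining this with $\|v_n\|_{L^{p_n}(K)}=1$ and uniform convergence (so $\limsup_n\|v_n\|_{L^{p_n}(K)}\le\|v_*\|_{L^\infty(K)}\le L'D$) forces $L'\ge 1/D$. Together with the upper bounds, both limits equal $1/D$. (For $\Lambda_p$ one could bypass the compactness step and reduce the lower bound to the case $K=B_1$, $x_0=0$ via Lemma \ref{lambdone_lemma}.)

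The main obstacle is precisely this lower bound in the asymptotics. The obvious shortcuts fail: there is no domain monotonicity for $\Lambda_{p,\infty}$ allowing a comparison with $B_D(x_0)$ (a thin convex spike joining $x_0$ to the farthest boundary point has $\Lambda_{p,\infty}$ much smaller than $B_D(x_0)$, even though the two $p$-th roots share the limit $1/D$), and the only other tool at hand, the estimate $\Lambda_{p,\infty}(K\setminus\{x_0\})\ge\mu_{p,\infty}(K)/2^p$ combined with Lemma \ref{lemma:lower_neumann}, merely yields $\liminf_p\Lambda_{p,\infty}(K\setminus\{x_0\})^{1/p}\ge 1/\mathrm{diam}(K)$, which is strictly worse than $1/D_K(x_0)$ whenever $D_K(x_0)<\mathrm{diam}(K)$. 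Hence one genuinely needs the $\Gamma$-convergence flavour of the compactness argument: $p>N$ makes minimizing sequences equi-Hölder, so the $L^p$-energies converge to the $L^\infty$ (Lipschitz) energy, whose extremal is exactly $|x-x_0|/D_K(x_0)$.
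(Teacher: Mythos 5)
Your proposal is correct and follows essentially the same route as the paper: the two inequalities in \eqref{porconi} by the same normalization/optimal-$t$ considerations, the upper bound via the test function $|x-x_0|$, and the lower bound via the standard $p\to\infty$ compactness scheme (uniform $W^{1,q}$ bounds for all $q>N$, uniform convergence to a limit, lower semicontinuity and $q\to\infty$ to get a Lipschitz limit, then evaluation at the farthest point to force the constant $1/D_K(x_0)$), which is exactly the argument the paper runs (it only does the compactness for $\Lambda_{p,\infty}$ and recovers $\Lambda_p$ from $|K|\,\Lambda_p\ge\Lambda_{p,\infty}$, while you also treat $\Lambda_p$ directly — an immaterial difference).
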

\begin{proof}
The leftmost inequality in \eqref{porconi} easily follows from H\"older's inequality. In order to prove the rightmost one,
let $u$ be a Lipschitz function on $\overline{K}$, such that $u(x_0)=0$ and $\|u\|_{L^\infty(K)}=1$. Let $t_u$ be such that
\[
\|u-t_u\|_{L^\infty(K)}=\min_{t\in\mathbb{R}} \|u-t\|_{L^\infty(K)}.
\]
By definition of $\mu_{p,\infty}(K)$, we have 
\[
|u(x)-t_u|\le \left(\frac{1}{\mu_{p,\infty}(K)}\right)^\frac{1}{p}\,\left(\int_{K} |\nabla u|^p\,dx\right)^\frac{1}{p},\qquad \mbox{ for every } x\in K.
\]
Thus, we obtain
\[
\begin{split}
|u(x)|=|u(x)-u(x_0)|&\le |u(x)-t_u|+|u(x_0)-t_u|\\
&\le 2\,\left(\frac{1}{\mu_{p,\infty}(K)}\right)^\frac{1}{p}\,\left(\int_{K} |\nabla u|^p\,dx\right)^\frac{1}{p},\quad \mbox{ for every } x\in K.
\end{split}
\]
By taking the supremum over $x\in K$ and recalling the normalization on $u$, we get
\[
\frac{\mu_{p,\infty}(K)}{2^p}\le \Lambda_{p,\infty}(K\setminus\{0\}),
\]
as desired.
\vskip.2cm\noindent
For the second part of the statement, we first observe that if $N<p_1<p_2$, then
\[
\left(\frac{\Lambda_{p_1,\infty}(K\setminus\{x_0\})}{|K|}\right)^\frac{1}{p_1}\le \left(\frac{\Lambda_{p_2,\infty}(K\setminus\{x_0\})}{|K|}\right)^\frac{1}{p_2},
\]
by H\"older's inequality. Thus, the limit
\[
\lim_{p\to\infty} \left(\frac{\Lambda_{p,\infty}(K\setminus\{x_0\})}{|K|}\right)^\frac{1}{p},
\] 
exists, by monotonicity. This in turn implies that $\lim_{p\to\infty} (\Lambda_{p,\infty}(K\setminus\{x_0\}))^{1/p}$ exists, as well. In order to estimate this limit from above, we notice that the function 
\[
u(x)=\left(\int_K |x-x_0|^p\,dx\right)^{-\frac{1}{p}}\,|x-x_0|,
\]
is admissible for $\Lambda_p(K\setminus\{x_0\})$. Thus, we get
\begin{equation}
\label{limsup}
\begin{split}
\lim_{p\nearrow\infty} \Big(\Lambda_{p,\infty}(K\setminus\{x_0\})\Big)^\frac{1}{p}&\le \lim_{p\nearrow \infty} \Big(\Lambda_p(K\setminus\{x_0\})\Big)^\frac{1}{p}\\
&\le \lim_{p\nearrow \infty} |K|^\frac{1}{p}\,\left(\int_K |x-x_0|^p\,dx\right)^{-\frac{1}{p}}=\frac{1}{D_K(x_0)}.
\end{split}
\end{equation}
Observe that we used that 
\[
D_K(x_0)=\max_{x\in \partial K} |x-x_0|=\max_{x\in \overline K} |x-x_0|,
\]
thanks to the convexity of $K$.
\par
The estimate from below is more elaborated, but the argument is nowadays quite standard (see for example \cite[Section 2]{BdBM}). For every $m\in \mathbb{N}$ such that $m\ge N+1$, let us take $u_{m}\in \mathrm{Lip}(\overline K)$ such that
\[
\|u_{m}\|_{L^\infty(K)}=1, \qquad u_{m}(x_0)=0,\qquad\int_K |\nabla u_{m}|^m\,dx<2\,\Lambda_{m,\infty}(K\setminus\{x_0\}).
\]
By H\"older's inequality, for every $m\ge N+1$ we have
\[
\int_K |\nabla u_m|^{N+1}\,dx\le |K|^{1-\frac{N+1}{m}}\,\left(\int_K |\nabla u_m|^m\,dx\right)^\frac{N+1}{m}\le |K|^{1-\frac{N+1}{m}}\,\Big(2\,\Lambda_{m,\infty}(K\setminus\{x_0\})\Big)^\frac{N+1}{m}.
\]
In light of \eqref{limsup}, this shows that $\{u_m\}_{m\ge N+1}$ is a bounded sequence in $W^{1,N+1}(K)$. By the Morrey-Sobolev compact embedding (see \cite[Theorem 12.61]{Leonibook}), we have that there exists a subsequence $\{u_{m^1_n}\}_{n\in \mathbb{N}}\subseteq \{u_m\}_{m\ge N+1}$ and a limit function $u_{\infty}\in W^{1,N+1}(K)\cap C(\overline{K})$, such that $u_{m^1_n}$ converges weakly in $W^{1,N+1}(K)$ and uniformly on $\overline{K}$ to $u_{\infty}$. Thus, we still have
\[
\|u_{\infty}\|_{L^\infty(K)}=1, \qquad u_{\infty}(x_0)=0.
\]
Moreover, by lower semicontinuity and \eqref{limsup}, we have 
\[
\left(\int_K |\nabla u_{\infty}|^{N+1}\,dx\right)^\frac{1}{N+1}\le \frac{|K|^\frac{1}{N+1}}{D_K(x_0)}.
\]
We can now recursively repeat the previous argument: we take $N+\ell+1$ for $\ell\in\mathbb{N}\setminus\{0\}$ and extract a subsequence $\{u_{m^{\ell+1}_n}\}_{n\in\mathbb{N}}$ from the previous one $\{u_{m^{\ell}_n}\}_{n\in\mathbb{N}}$. Indeed, at each step, we have 
\begin{equation}
\label{}
\int_K |\nabla u_{m_n^\ell}|^{N+\ell+1}\,dx\le |K|^{1-\frac{N+\ell+1}{m_n^\ell}}\,\Big(2\,\Lambda_{m_n^\ell,\infty}(K\setminus\{x_0\})\Big)^\frac{N+\ell+1}{m_n^\ell},
\end{equation}
which shows that $\{u_{m^\ell_n}\}_{n\in\mathbb{N}}$ is a bounded sequence in $W^{1,N+\ell+1}(K)$. As before, there exists a subsequence $\{u_{m^{\ell+1}_n}\}_{n\in\mathbb{N}}$ which converges weakly in $W^{1,N+\ell+1}(K)$ and uniformly on $\overline{K}$. By construction, the limit function must still coincide with the original limit function $u_{\infty}$. This shows that $u_{\infty}\in W^{1,N+\ell+1}$ for every $\ell\in\mathbb{N}$ and that
\begin{equation}
\label{rottoilca}
\begin{split}
\left(\int_K |\nabla u_{\infty}|^{N+\ell+1}\,dx\right)^\frac{1}{N+\ell+1}&\le \lim_{n\to\infty}|K|^{\frac{1}{N+\ell+1}-\frac{1}{m_n^\ell}}\,\Big(2\,\Lambda_{m_n^\ell,\infty}(K\setminus\{x_0\})\Big)^\frac{1}{m_n^\ell}\\
&\le\frac{|K|^\frac{1}{N+\ell+1}}{D_K(x_0)}.
\end{split}
\end{equation}
By taking the limit as $\ell$ goes to $\infty$, we get that $u_\infty\in \mathrm{Lip}(\overline{K})$, with 
\[
\|\nabla u_\infty\|_{L^\infty(K)}\le \frac{1}{D_K(x_0)},\qquad \|u_\infty\|_{L^\infty(K)}=1, \qquad u_\infty(x_0)=0.
\]
Actually, the last two properties show that the first one can be improved.
Indeed, let $\overline{x}\in\overline{K}$ be a maximum point of $|u_\infty|$. We then have\footnote{As already observed, by convexity of $K$, we have
\[
D_K(x_0)=\max_{x\in \partial K} |x-x_0|=\max_{x\in \overline{K}} |x-x_0|.
\]} 
\[
1=|u_\infty(\overline{x})|=|u_\infty(\overline{x})-u_\infty(x_0)|\le \|\nabla u_\infty\|_{L^\infty(K)}\,|\overline{x}-x_0|\le \frac{|\overline{x}-x_0|}{D_K(x_0)}\le 1.
\]
This implies that equality must hold everywhere. In particular, we get 
\begin{equation}
\label{scassatoilca}
\|\nabla u_\infty\|_{L^\infty(K)}= \frac{1}{D_K(x_0)}.
\end{equation}
With this information at hand, we can now conclude: we go back to \eqref{rottoilca} and observe that 
\[
\begin{split}
\lim_{n\to\infty}|K|^{\frac{1}{N+\ell+1}-\frac{1}{m_n^\ell}}\,\Big(2\,\Lambda_{m_n^\ell,\infty}(K\setminus\{0\})\Big)^\frac{1}{m_n^\ell}
&=|K|^\frac{1}{N+\ell+1}\,\lim_{m\to\infty}\Big(\Lambda_{m,\infty}(K\setminus\{0\})\Big)^\frac{1}{m}.
\end{split}
\]
Thus, we obtain
\[
\lim_{m\to\infty}\Big(\Lambda_{m,\infty}(K\setminus\{0\})\Big)^\frac{1}{m}\ge |K|^{-\frac{1}{N+\ell+1}}\,\left(\int_K |\nabla u_\infty|^{N+\ell+1}\,dx\right)^\frac{1}{N+\ell+1}.
\]
By taking the limit as $\ell$ goes to $\infty$ and using \eqref{scassatoilca}, we conclude.
\end{proof}
We conclude this part, by observing that $\Lambda_p(Q_1\setminus\{0\})$ actually coincides with $\lambda_p$ of a suitable ``pepper'' set. More precisely, we have the following
\begin{lemma}
\label{lemma:forelli}
For $1\le N<p$, we have
\[
\lambda_{p}(\mathbb{R}^N\setminus \mathbb{Z}^N)=\Lambda_{p}(Q_{1/2}\setminus\{0\})=2^p\,\Lambda_{p}(Q_1\setminus\{0\}).
\]
\end{lemma}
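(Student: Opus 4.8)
The plan is to prove the two equalities separately. The second one, $\Lambda_p(Q_{1/2}\setminus\{0\}) = 2^p\,\Lambda_p(Q_1\setminus\{0\})$, is pure scaling: if $u\in\mathrm{Lip}(\overline{Q_1})$ with $u(0)=0$, then $v(x):=u(2x)$ lies in $\mathrm{Lip}(\overline{Q_{1/2}})$ with $v(0)=0$, and a change of variables gives $\int_{Q_{1/2}}|\nabla v|^p\,dx = 2^{p-N}\int_{Q_1}|\nabla u|^p\,dx$ together with $\|v\|_{L^p(Q_{1/2})}^p = 2^{-N}\|u\|_{L^p(Q_1)}^p$. Taking the ratio, the $2^{-N}$ cancels and one is left with a factor $2^p$; minimizing over admissible functions (which are in bijection under this dilation) yields the claimed identity. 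This is the routine part.

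The heart of the matter is the first equality $\lambda_p(\mathbb{R}^N\setminus\mathbb{Z}^N) = \Lambda_p(Q_{1/2}\setminus\{0\})$. For the inequality $\lambda_p(\mathbb{R}^N\setminus\mathbb{Z}^N)\le \Lambda_p(Q_{1/2}\setminus\{0\})$: I would take a near-optimal $u\in\mathrm{Lip}(\overline{Q_{1/2}})$ for $\Lambda_p(Q_{1/2}\setminus\{0\})$, extend it $\mathbb{Z}^N$-periodically to all of $\mathbb{R}^N$ (the periodic extension is still Lipschitz since all the lattice points where matching occurs are the vanishing points $u=0$, so there is no jump — this is where $p>N$ is not even needed, just continuity), and then truncate by a cutoff supported on a large cube $Q_M$ of integer side length, so as to land in $W^{1,p}_0$ of a set whose closure avoids $\mathbb{Z}^N$. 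Actually a cleaner route: the periodic function $U$ vanishes on $\mathbb{Z}^N$, and on the large cube $(-M+\tfrac12, M-\tfrac12)^N$ one has $\int|\nabla U|^p / \int |U|^p \to \Lambda_p(Q_{1/2}\setminus\{0\})$ as $M\to\infty$ by periodicity; a standard cutoff/Poincaré argument (the error terms from the cutoff gradient are of lower order in $M$) shows $\lambda_p(\mathbb{R}^N\setminus\mathbb{Z}^N)\le \int|\nabla U|^p/\int|U|^p + o(1)$, and letting $M\to\infty$ then using a near-optimal $u$ gives the bound. The key technical point to check is that $U$ genuinely belongs to $\mathscr{D}^{1,p}_0(\mathbb{R}^N\setminus\mathbb{Z}^N)$ after truncation, i.e. that it can be approximated by $C^\infty_0(\mathbb{R}^N\setminus\mathbb{Z}^N)$ functions — here one uses that points have positive $p$-capacity when $p>N$, so $W^{1,p}_0(\mathbb{R}^N\setminus\mathbb{Z}^N)$ coincides with $W^{1,p}_0$ of the full cube region and removing the lattice points costs nothing.

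For the reverse inequality $\lambda_p(\mathbb{R}^N\setminus\mathbb{Z}^N)\ge \Lambda_p(Q_{1/2}\setminus\{0\})$: given $\varphi\in C^\infty_0(\mathbb{R}^N\setminus\mathbb{Z}^N)$, decompose $\mathbb{R}^N = \bigcup_{z\in\mathbb{Z}^N} (z + \overline{Q_{1/2}})$ into unit cubes centered at the lattice points. On each such cube $z+Q_{1/2}$, the translated function $\varphi(\cdot + z)$ restricted to $\overline{Q_{1/2}}$ is Lipschitz and vanishes at the center $0$ (since $\varphi$ vanishes near $\mathbb{Z}^N$), hence is admissible for $\Lambda_p(Q_{1/2}\setminus\{0\})$, giving
\[
\Lambda_p(Q_{1/2}\setminus\{0\})\,\int_{z+Q_{1/2}} |\varphi|^p\,dx \le \int_{z+Q_{1/2}} |\nabla\varphi|^p\,dx.
\]
Summing over $z\in\mathbb{Z}^N$ yields $\Lambda_p(Q_{1/2}\setminus\{0\})\,\|\varphi\|_{L^p}^p \le \|\nabla\varphi\|_{L^p}^p$, and taking the infimum over $\varphi$ gives the desired bound. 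I expect the main obstacle to be the density/capacity argument in the first inequality — making precise that the periodic (then truncated) competitor is an admissible function for $\lambda_p(\mathbb{R}^N\setminus\mathbb{Z}^N)$, which is exactly where $p>N$ enters via the fact that $\mathbb{Z}^N$ has zero relevance for the Sobolev space; everything else is elementary scaling and tiling.
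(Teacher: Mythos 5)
Your tiling argument for $\lambda_p(\mathbb{R}^N\setminus\mathbb{Z}^N)\ge \Lambda_p(Q_{1/2}\setminus\{0\})$ and the scaling identity are fine and coincide with the paper's. The gap is in the other direction, precisely at the step you flagged as unproblematic: the $\mathbb{Z}^N$-periodic extension of a generic admissible $u\in\mathrm{Lip}(\overline{Q_{1/2}})$ is \emph{not} Lipschitz, nor even continuous. The translated copies $u(\cdot+\mathbf{i})$ are glued along the faces of the cubes $\mathbf{i}+\overline{Q_{1/2}}$, i.e.\ along the hyperplanes with half-integer coordinates --- not at the lattice points, which sit in the interior of the cubes. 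Periodicity of the extension requires the values of $u$ on opposite faces of $\overline{Q_{1/2}}$ to match, and a near-minimizer has no reason to satisfy this; across those faces the extension generically jumps, so it fails to lie in $W^{1,p}_{\rm loc}$ and is not an admissible competitor. Your justification ``the matching occurs at the vanishing points $u=0$'' misidentifies where the gluing happens, so as written the upper bound is not proved.

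This is exactly what the paper's Appendix Lemma \ref{lemma:cube_symmetry} is for: using Benguria's hidden convexity, one may assume without loss of generality that the near-minimizer $u$ is non-negative and even in each coordinate; for such a $u$ the values on opposite faces automatically agree, the periodic sum $U_m=\sum_{\mathbf{i}}u(\cdot+\mathbf{i})$ is Lipschitz, and the cutoff argument then runs as you sketched. (An alternative repair, not taken in the paper, is to glue by iterated even reflections across the cube faces rather than by translations: the reflected extension of \emph{any} Lipschitz $u$ is Lipschitz, it vanishes at every lattice point because the reflections map the center $0$ to the neighbouring lattice points, and each unit cube contributes the same Dirichlet and $L^p$ integrals as $Q_{1/2}$, so the ratio is preserved.) One further small correction: your remark that ``removing the lattice points costs nothing'' because of capacity is backwards --- for $p>N$ points have \emph{positive} $p$-capacity, which is the whole reason $\Lambda_p(Q_{1/2}\setminus\{0\})>0$ and the lemma is non-trivial. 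What you actually need is only the elementary fact that a continuous, compactly supported $W^{1,p}$ function vanishing on $\mathbb{Z}^N$ belongs to $W^{1,p}_0(\mathbb{R}^N\setminus\mathbb{Z}^N)$, which follows by truncation and mollification, with no capacity argument.
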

\begin{proof}
The rightmost equality simply follows by scaling. Let us prove the leftmost one.
\par
Let $u\in C^\infty_0(\mathbb{R}^N\setminus \mathbb{Z}^N)$. By tiling the space with the cubes
\[
Q_{1/2}(\mathbf{i}),\qquad \mbox{ with }\mathbf{i}\in\mathbb{Z}^N,
\]
we easily see that $u$ is admissible for the variational problem which defines $\Lambda_p(Q_{1/2}(\mathbf{i})\setminus\{\mathbf{i}\})$. Thus, we get
\[
\int_\Omega |\nabla u|^p\,dx=\sum_{\mathbf{i}\in\mathbb{Z}^N} \int_{Q_{1/2}(\mathbf{i})} |\nabla u|^p\,dx\ge \sum_{\mathbf{i}\in\mathbb{Z}^N} \Lambda_p(Q_{1/2}(\mathbf{i})\setminus\{\mathbf{i}\})\,\int_{Q_{1/2}(\mathbf{i})} |u|^p\,dx.
\]
Since we have 
\[
\Lambda_p(Q_{1/2}(\mathbf{i})\setminus\{\mathbf{i}\})=\Lambda_p(Q_{1/2}\setminus\{0\}),\qquad \mbox{ for every } \mathbf{i}\in\mathbb{Z}^N,
\]
we can infer 
\[
\int_\Omega |\nabla u|^p\,dx\ge \Lambda_p(Q_{1/2}\setminus\{0\})\,\int_{\Omega} |u|^p\,dx.
\]
By the arbitrariness of $u$, this yields $\lambda_{p}(\mathbb{R}^N\setminus \mathbb{Z}^N)\ge \Lambda_{p}(Q_{1/2}\setminus\{0\})$. 
\par
In order to prove the reverse inequality, we take $u\in \mathrm{Lip}(\overline{Q_{1/2}})$ such that $u(0)=0$ and $\|u\|_{L^p(Q_{1/2})}=1$. According to Lemma \ref{lemma:cube_symmetry}, we can further suppose that $u$ is non-negative and symmetric with respect to each variable.
For every $m\in\mathbb{N}$, we define
\[
\mathbb{Z}^N_m=\Big\{\mathbf{i}=(i_1,\dots,i_N)\in\mathbb{Z}^N\, :\, |\mathbf{i}|_{\ell^\infty}:=\max_{k=1,\dots,N}|i_k|\le m\Big\},
\]
and then we set
\[
U_m(x)=\sum_{\mathbf{i}\in\mathbb{Z}^N_m} u(x+\mathbf{i}).
\] 
Observe that this is Lipschitz function on the cube $Q_{m+1/2}$ (thanks to the symmetries of $u$), vanishing at each point $\mathbf{i}\in \mathbb{Z}^N_m$. We then take $\eta_m$ a $1$-Lipschitz cut-off function, such that
\[
0\le \eta_m\le 1,\qquad \eta_m\equiv 1 \mbox{ on }  Q_{m-1/2},\qquad \eta_m=0 \mbox{ on } \partial Q_{m+1/2}.
\]
By construction, we get that $\eta_m\,U_m\in W^{1,p}_0(\mathbb{R}^N\setminus \mathbb{Z}^N)$, where we extend it by $0$ outside $Q_{m+1/2}$.
Thus, we get
\[
\begin{split}
\Big(\lambda_{p}(\mathbb{R}^N\setminus \mathbb{Z}^N)\Big)^\frac{1}{p}&\le \frac{\left(\displaystyle \int_{\mathbb{R}^N} |\nabla (\eta_m\,U_m)|^p\,dx\right)^\frac{1}{p}}{\displaystyle \left(\int_{\mathbb{R}^N}|\eta_m\,U_m|^p\,dx\right)^\frac{1}{p}}\\
&\le \frac{\left(\displaystyle \sum_{\mathbf{i}\in\mathbb{Z}^N_m}\int_{Q_{1/2}(\mathbf{i})} |\nabla \eta_m|^p\,|U_m|^p\,dx\right)^\frac{1}{p}}{\displaystyle \left(\sum_{\mathbf{i}\in\mathbb{Z}^N_m}\int_{Q_{1/2}(\mathbf{i})}|\eta_m\,U_m|^p\,dx\right)^\frac{1}{p}}+\frac{\left(\displaystyle \sum_{\mathbf{i}\in\mathbb{Z}^N_m}\int_{Q_{1/2}(\mathbf{i})} |\nabla U_m|^p\,|\eta_m|^p\,dx\right)^\frac{1}{p}}{\displaystyle \left(\sum_{\mathbf{i}\in\mathbb{Z}^N_m}\int_{Q_{1/2}(\mathbf{i})}|\eta_m\,U_m|^p\,dx\right)^\frac{1}{p}}.
\end{split}
\]
We now observe that, thanks to the properties of $\eta_m$, we have
\[
\begin{split}
\sum_{\mathbf{i}\in\mathbb{Z}^N_m}\int_{Q_{1/2}(\mathbf{i})}|\eta_m\,U_m|^p\,dx\ge \sum_{\mathbf{i}\in\mathbb{Z}^N_{m-1}}\int_{Q_{1/2}(\mathbf{i})}|\eta_m\,U_m|^p\,dx&=\sum_{\mathbf{i}\in\mathbb{Z}^N_{m-1}}\int_{Q_{1/2}(\mathbf{i})}|U_m|^p\,dx\\
&=(2\,m-1)^N\,\int_{Q_{1/2}}|u|^p\,dx.
\end{split}
\]
We also used that $U_m$ coincides with a translated copy of the original function $u$ defined on $Q_{1/2}$. As for the first integral at the numerator, since $\eta_m$ is $1-$Lipschitz and is constant on $Q_{m-1/2}$, we get
\[
\sum_{\mathbf{i}\in\mathbb{Z}^N_m}\int_{Q_{1/2}(\mathbf{i})} |\nabla \eta_m|^p\,|U_m|^p\,dx\le \sum_{|\mathbf{i}|_{\ell^\infty}=m}\int_{Q_{1/2}(\mathbf{i})} |U_m|^p\,dx=\Big[(2\,m+1)^N-(2\,m-1)^N\Big]\,\int_{Q_{1/2}} |u|^p\,dx.
\]
Finally, by using that $|\eta_m|\le 1$, we have 
\[
\sum_{\mathbf{i}\in\mathbb{Z}^N_m}\int_{Q_{1/2}(\mathbf{i})} |\nabla U_m|^p\,|\eta_m|^p\,dx\le \sum_{\mathbf{i}\in\mathbb{Z}^N_m}\int_{Q_{1/2}(\mathbf{i})} |\nabla U_m|^p\,dx=(2\,m+1)^N\,\int_{Q_{1/2}} |\nabla u|^p\,dx.
\]
By using these estimates, we get
\[
\Big(\lambda_{p}(\mathbb{R}^N\setminus \mathbb{Z}^N)\Big)^\frac{1}{p}\le \left(\left(\frac{2\,m+1}{2\,m-1}\right)^N-1\right)^\frac{1}{p}+\left(\frac{2\,m+1}{2\,m-1}\right)^\frac{N}{p}\frac{\|\nabla u\|_{L^p(Q_{1/2})}}{\|u\|_{L^p(Q_{1/2})}}.
\]
If we now take the limit as $m$ goes to $\infty$, this yields
\[
\Big(\lambda_{p}(\mathbb{R}^N\setminus \mathbb{Z}^N)\Big)^\frac{1}{p}\le \frac{\|\nabla u\|_{L^p(Q_{1/2})}}{\|u\|_{L^p(Q_{1/2})}}.
\]
Since $u$ is arbitrary, we get $\lambda_p(\mathbb{R}^N\setminus\mathbb{Z}^N)\le \Lambda_p(Q_1\setminus\{0\})$, as well.
\end{proof}

\subsection{Inradius and principal frequencies}
By using the punctured Poincar\'e constants of the previous subsection, we will now derive a lower bound on $\lambda_{p,q}$ for $p>N$ in terms of the inradius, which is valid for every open set.
This generalizes \cite[Theorem 1.4.1]{Po1}, by means of a different proof. Moreover, we pay due attention to the quality of the constant obtained. In particular, we prove that it has the correct asymptotic behaviour, in the regimes $p\searrow N$ and $p\nearrow \infty$.
\begin{theorem}[Endpoint cases]
\label{thm:endpoint}
	Let $1\le N < p$. Then, for every $\Omega \subseteq \mathbb{R}^N$ open set with finite inradius $r_{\Omega}$, we have 
	\begin{equation} 
	\label{makai_super_case}
		\lambda_p(\Omega) \geq \frac{\beta_{N,p}}{r_{\Omega}^p},\qquad \mbox{ with }\beta_{N,p} = \max \left\{ \frac{ \Lambda_p(B_1\setminus\{0\})}{(\sqrt{N} + 1)^p} , \left(\frac{p-N}{p}\right)^p \right\}>0,
	\end{equation}
	and
		\begin{equation} 
	\label{lambda_p_inf}
 		\lambda_{p, \infty}(\Omega) \geq \frac{\Lambda_{p, \infty}(B_1\setminus\{0\})}{r_{\Omega}^{p-N}}.
 	\end{equation}
Moreover, the constants $\beta_{N,p}$ and $\Lambda_{p,\infty}(B_1\setminus\{0\})$ have the following asymptotic behaviours  
	\[
	0<\liminf_{p\searrow N} \frac{\beta_{N,p}}{(p-N)^{p-1}}\le \limsup_{p\searrow N} \frac{\beta_{N,p}}{(p-N)^{p-1}}<+\infty,
	\]
	\[
0<\liminf_{p\searrow N} \frac{\Lambda_{p, \infty}(B_1; 0)}{(p-N)^{p-1}}\le \limsup_{p\searrow N} \frac{\Lambda_{p, \infty}(B_1; 0)}{(p-N)^{p-1}}<+\infty,
\]
and
	 \[
	\lim_{p\to\infty} \Big(\beta_{N,p}\Big)^\frac{1}{p}=\lim_{p\to\infty} \Big(\Lambda_{p, \infty}(B_1\setminus \{0\})\Big)^\frac{1}{p}=1.
	\]
\end{theorem}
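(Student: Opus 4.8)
\emph{Strategy.} The plan is to prove the two endpoint inequalities \eqref{makai_super_case} and \eqref{lambda_p_inf} by covering arguments, and then to read off the asymptotics from Lemmas \ref{lambdone_lemma}, \ref{lemma:Lambda_cap} and \ref{lemma:lower_neumann} together with one explicit radial test function. We may assume $0<r_\Omega<+\infty$, the case $r_\Omega=0$ (i.e.\ $\Omega=\emptyset$) being vacuous.

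\emph{The $L^\infty$ bound.} For \eqref{lambda_p_inf} no tiling is needed, the point being that the $L^\infty$ norm does not see the size of the domain. Given $u\in C^\infty_0(\Omega)$, extended by $0$, let $x^\ast$ be a maximum point of $|u|$ and $y^\ast\in\partial\Omega$ a nearest boundary point, so that $|x^\ast-y^\ast|=d_\Omega(x^\ast)\le r_\Omega$ and $u(y^\ast)=0$. Since $x^\ast\in\overline{B_{r_\Omega}(y^\ast)}$ we get $\|u\|_{L^\infty(B_{r_\Omega}(y^\ast))}=\|u\|_{L^\infty(\Omega)}$, and $u$ restricted to $\overline{B_{r_\Omega}(y^\ast)}$ is admissible for $\Lambda_{p,\infty}(B_{r_\Omega}(y^\ast)\setminus\{y^\ast\})$; hence, by the scaling in \eqref{scala},
\[
\int_\Omega|\nabla u|^p\,dx\ \ge\ \int_{B_{r_\Omega}(y^\ast)}|\nabla u|^p\,dx\ \ge\ \Lambda_{p,\infty}\big(B_{r_\Omega}(y^\ast)\setminus\{y^\ast\}\big)\,\|u\|_{L^\infty(\Omega)}^p\ =\ \frac{\Lambda_{p,\infty}(B_1\setminus\{0\})}{r_\Omega^{\,p-N}}\,\|u\|_{L^\infty(\Omega)}^p,
\]
and \eqref{lambda_p_inf} follows by taking the infimum over $u$; positivity of $\Lambda_{p,\infty}(B_1\setminus\{0\})$ follows from Lemma \ref{lemma:Lambda_cap} and Lemma \ref{lemma:lower_neumann}.

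\emph{The $\lambda_p$ bound.} Here I would establish the two competitors in $\beta_{N,p}$ separately. The bound $\lambda_p(\Omega)\ge((p-N)/p)^p\,r_\Omega^{-p}$ is the Hardy inequality for $p>N$ combined with $d_\Omega\le r_\Omega$, recorded in \cite[Theorem 5.4 and Remark 5.5]{BPZ2}, which I would simply invoke. For the bound involving $\Lambda_p(B_1\setminus\{0\})$, tile $\mathbb{R}^N$ by the cubes $Q_{r_\Omega}(\mathbf z)$, $\mathbf z\in 2\,r_\Omega\,\mathbb{Z}^N$. Since $\Omega$ is open with inradius $r_\Omega$, the closed inscribed ball $\overline{B_{r_\Omega}(\mathbf z)}$ cannot be contained in $\Omega$ (otherwise $\Omega$ would contain a ball of radius strictly bigger than $r_\Omega$), so we may pick $x_{\mathbf z}\in\overline{B_{r_\Omega}(\mathbf z)}\setminus\Omega\subseteq\overline{Q_{r_\Omega}(\mathbf z)}\setminus\Omega$ with $|x_{\mathbf z}-\mathbf z|\le r_\Omega$. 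The geometric gain of choosing $x_{\mathbf z}$ \emph{inside the inscribed ball} is that then
\[
D_{Q_{r_\Omega}(\mathbf z)}(x_{\mathbf z})\ \le\ |x_{\mathbf z}-\mathbf z|+\max_{y\in\partial Q_{r_\Omega}(\mathbf z)}|\mathbf z-y|\ \le\ r_\Omega+\sqrt N\,r_\Omega\ =\ (\sqrt N+1)\,r_\Omega,
\]
so Lemma \ref{lambdone_lemma} yields $\Lambda_p\big(Q_{r_\Omega}(\mathbf z)\setminus\{x_{\mathbf z}\}\big)\ge\Lambda_p(B_1\setminus\{0\})\,\big((\sqrt N+1)\,r_\Omega\big)^{-p}$. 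For $u\in C^\infty_0(\Omega)$ one restricts to each cube (on which $u$ is Lipschitz and vanishes at $x_{\mathbf z}$), applies this inequality, sums over $\mathbf z$, and uses $\sum_{\mathbf z}\|u\|_{L^p(Q_{r_\Omega}(\mathbf z))}^p=\|u\|_{L^p(\Omega)}^p$; taking the infimum over $u$ and then the maximum with the Hardy bound gives \eqref{makai_super_case}, with $\beta_{N,p}>0$ since $((p-N)/p)^p>0$.

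\emph{Asymptotics, and the main obstacle.} The limits as $p\to\infty$ are immediate from Lemma \ref{lemma:Lambda_cap} with $K=B_1$, $x_0=0$ (so $D_{B_1}(0)=1$): it gives $(\Lambda_{p,\infty}(B_1\setminus\{0\}))^{1/p}\to1$ and $(\Lambda_p(B_1\setminus\{0\}))^{1/p}\to1$, whence $(\beta_{N,p})^{1/p}=\max\{(\Lambda_p(B_1\setminus\{0\}))^{1/p}(\sqrt N+1)^{-1},(p-N)/p\}\to\max\{(\sqrt N+1)^{-1},1\}=1$. For $p\searrow N$, the lower bounds come from chaining Lemma \ref{lemma:Lambda_cap}, namely $\Lambda_p(B_1\setminus\{0\})\ge\omega_N^{-1}\Lambda_{p,\infty}(B_1\setminus\{0\})\ge\omega_N^{-1}2^{-p}\mu_{p,\infty}(B_1)$, with the geometric lower bound of Lemma \ref{lemma:lower_neumann} for $\mu_{p,\infty}(B_1)$: there the factor $\big((1/N-1/p)/(1-1/p)\big)^{p-1}=\big((p-N)/(N(p-1))\big)^{p-1}$ produces exactly the power $(p-N)^{p-1}$, all remaining factors tending to positive constants. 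For the matching upper bounds I would test both punctured constants on the radial function $u$ equal to $0$ on $B_\epsilon$ and to $\log(|x|/\epsilon)/\log(1/\epsilon)$ on $B_1\setminus B_\epsilon$, with radial profile $g$, making the optimal choice $\log(1/\epsilon)=p/(p-N)$; a direct computation then gives $\int_{B_1}|\nabla u|^p\,dx=N\omega_N\,(p-N)^{p-1}(e^p-1)/p^p$, while $\|u\|_{L^\infty(B_1)}=1$ and $\|u\|_{L^p(B_1)}^p=N\omega_N\int_0^1 g(r)^p r^{N-1}\,dr\to\omega_N$ as $p\searrow N$. This last point is the delicate part of the proof: one must pick $\epsilon$ so as to balance the two competing effects in $\int_0^1|g'(r)|^p r^{N-1}\,dr$ and then check that the $L^p$ normalization stays bounded away from $0$ uniformly for $p$ near $N$. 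Combining these with the explicit form of $\beta_{N,p}$, and noting $((p-N)/p)^p/(p-N)^{p-1}=(p-N)/p^p\to0$, yields the claimed two-sided estimates for $\beta_{N,p}/(p-N)^{p-1}$ and $\Lambda_{p,\infty}(B_1\setminus\{0\})/(p-N)^{p-1}$.
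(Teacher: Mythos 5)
Your proposal is correct and follows essentially the same strategy as the paper: the Hardy bound plus a cube tiling with the punctured constants of Lemma \ref{lambdone_lemma} for $q=p$, a punctured ball centered at a nearest boundary point for $q=\infty$, and the asymptotics read off from Lemmas \ref{lemma:Lambda_cap} and \ref{lemma:lower_neumann} together with an explicit radial test function. The only differences are cosmetic: you use closed balls and cubes of size exactly $r_\Omega$ instead of the paper's $\varepsilon$-enlarged ones (and then let $\varepsilon\to0$), a truncated logarithm with $\log(1/\epsilon)=p/(p-N)$ in place of the paper's smoothed power $(\varepsilon^2+|x|^2)^{\frac{p-N}{2(p-1)}}-\varepsilon^{\frac{p-N}{p-1}}$ for the upper bound as $p\searrow N$, and Lemma \ref{lemma:Lambda_cap} directly (rather than the $\infty$-eigenvalue limit of \cite{JLM}) for $\lim_{p\to\infty}(\beta_{N,p})^{1/p}=1$; all of these are valid.
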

\begin{proof} We divide the proof in four parts.
\vskip.2cm\noindent
{\bf Part 1: inequality for $q=p$}. 
From \cite[Theorem 5.4 \& Remark 5.5]{BPZ2}, we already have
\begin{equation}
\label{mezza2_stima_super_case}
\lambda_p(\Omega)\ge \left(\frac{p-N}{p}\right)^p\,\frac{1}{r_\Omega^p}.
\end{equation}
This is a plain consequence of the Hardy inequality contained in \cite[Theorem 1.1]{GPP}. 
Unfortunately, the constant obtained in this way has a sub-optimal behaviour as $p\searrow N$. In order to rectify this fact, we give a different proof, based on Taylor's idea of tiling the space with cubes ``large enough''. We will see that for $p>N$, the situation is simpler.
\par
Without loss of generalization, we can assume $r_{\Omega} = 1$. We fix $\varepsilon>0$ and consider the tiling of $\mathbb{R}^N$ made by the cubes 
\[
\mathcal{Q}_{\mathbf{i},\varepsilon} := Q_{1+\varepsilon}((2+2\,\varepsilon)\,\mathbf{i}), \qquad \mbox{ for } \mathbf{i} \in \mathbb{Z}^N.
\]
We also consider the set of indices 
\[
\mathbb{Z}^N_{\Omega,\varepsilon} = \Big\{\mathbf{i}=(i_1, \ldots, i_N) \in \mathbb{Z}^N \,: \, \mathcal{Q}_{\mathbf{i},\varepsilon} \cap \Omega \neq \emptyset \Big\}.
\]
Let $u \in C_0^{\infty}(\Omega)$, we observe that for every $\mathbf{i} \in \mathbb{Z}^N_{\Omega,n}$ there must exist 
\[
x_{\mathbf{i},\varepsilon}\in B_{1+\varepsilon}((2+2\,\varepsilon)\,\mathbf{i})\setminus \Omega,
\]
thanks to the fact that $r_\Omega=1$: this implies that a ball of radius $1+\varepsilon$ can not be entirely contained in $\Omega$.
Then, by the tiling property of the collection $\{\mathcal{Q}_{\mathbf{i},\varepsilon}\}_{i \in \mathbb{Z}^N}$, the definition of $\Lambda_p(\mathcal{Q}_{\mathbf{i},\varepsilon}\setminus\{x_{\mathbf{i},\varepsilon}\})$ and Lemma \ref{lambdone_lemma} applied to each cube of this collection,  we get
\[
\begin{split}
		\int_{\Omega} |\nabla u|^p dx &= \sum_{\mathbf{i}\in \mathbb{Z}^N_{\Omega, \varepsilon}} \int_{\mathcal{Q}_{\mathbf{i},\varepsilon}} |\nabla u|^p dx \\  &\geq  \sum_{\mathbf{i}\in \mathbb{Z}^N_{\Omega, \varepsilon}} \Lambda_p(\mathcal{Q}_{\mathbf{i},\varepsilon}\setminus\{x_{\mathbf{i},\varepsilon}\})\, \|u\|^p_{L^p(\mathcal{Q}_{\mathbf{i},\varepsilon})} \geq \frac{\Lambda_p(B_1\setminus\{0\})}{\left(\left(1+\varepsilon\right)\,\sqrt{N} + 1 + \varepsilon\right)^p}\, \|u\|^p_{L^p(\Omega)}.
	\end{split}
	\]
Observe that we used that $x_{\mathbf{i},\varepsilon}\in B_{1+\varepsilon}((2+2\,\varepsilon)\,\mathbf{i})$, to infer that	
\[
\max_{y\in\partial \mathcal{Q}_{\mathbf{i},\varepsilon}} |x_0-y|\le \left(1+\varepsilon\right)\,\sqrt{N} + 1 + \varepsilon.
\]	
By taking the limit as $\varepsilon$ goes to $0$ in the estimate above, we obtain 
	\begin{equation} 
	\label{mezza_stima_super_case}
			\|\nabla u\|^p_{L^p(\Omega)}  \geq \frac{\Lambda_p(B_1\setminus\{0\})}{(\sqrt{N} + 1)^p}\, \|u\|^p_{L^p(\Omega)}. 
	\end{equation}
This is enough to obtain the lower bound on $\lambda_p(\Omega)$. 
Finally, by joining the two estimates \eqref{mezza_stima_super_case} and \eqref{mezza2_stima_super_case} we obtain \eqref{makai_super_case}, with the claimed constant $\beta_{N,p}$. 
\vskip.2cm\noindent
{\bf Part 2: inequality for $q=\infty$.} We can now give the counterpart of \eqref{makai_super_case}, for the endpoint case $q=\infty$. The argument is extremely simple, based on the properties of the $L^\infty$ norm and on a basic geometric fact.  As before, up to scaling, we can assume that $r_{\Omega} = 1$.  For every $\varepsilon > 0$, we consider the family of balls 
\[
\Big\{B_{1+\varepsilon}(y) \, : \, y \in \partial \Omega\Big\}.
\] 
It is not difficult to see that this is a covering of $\Omega$. Indeed, by definition of inradius, for every $x \in \Omega$, there exists $y\in \partial \Omega$ such that 
\[
d_\Omega(x)=|x-y|\le r_\Omega=1.
\]
In particular, this implies that $x \in B_{1+\varepsilon}(y)$. By arbitrariness of $x\in\Omega$, we get the claimed covering property.
\par
We now take $u \in C^{\infty}_0(\Omega)$. In particular, this is a continuous compactly supported function. Hence, there exists $\overline{x} \in \Omega$ such that 
\[
|u(\overline{x})|=\|u\|_{L^\infty(\Omega)}.
\] 
Thanks to the previous discussion, there exists $\overline{y} \in \partial \Omega$ such that $\overline{x} \in B_{1+\varepsilon}(\overline{y})$. Thus, we obtain 
\[
\begin{split}
 	\| u \|^p_{L^\infty(\Omega)} =|u(\overline{x})|^p&= \|u\|^p_{L^\infty(B_{1+\varepsilon}(\overline{y}))} \\ &\leq \frac{1}{\Lambda_{p, \infty}(B_{1+\varepsilon}(\overline{y}), \overline{y})} \int_{\Omega} |\nabla u|^p dx = \frac{(1+\varepsilon)^{p-N}}{\Lambda_{p, \infty}(B_1; 0)} \int_{\Omega} |\nabla u|^p dx.
 	\end{split}
	\]
In the last equality we used \eqref{scala}.	
By letting $\varepsilon$ go to $0$, we obtain \eqref{lambda_p_inf}.
\vskip.2cm\noindent
{\bf Part 3: asymptotics for $\Lambda_{p,\infty}(B_1\setminus\{0\})$.} By Lemma \ref{lemma:Lambda_cap}, we know that 
\[
\Lambda_{p,\infty}(B_1\setminus\{0\})\ge \frac{\mu_{p,\infty}(B_1)}{2^p}.
\]
In turn, the right-hand side can be bounded from below thanks to \eqref{lowerbound_mu}. Thus, we obtain
\[
\Lambda_{p,\infty}(B_1\setminus\{0\})\ge \frac{N}{2^p}\,\left(\frac{\omega_N}{2^N}\right)^p\, \left(\frac{p-N}{p-1}\right)^{p-1}\, \omega_N,
\]
which gives the claimed asymptotic behaviour from below, as $p$ goes to $N$.
\par
On the other hand, by testing the definition of $\Lambda_{p, \infty}(B_1\setminus\{0\})$ with 
\begin{equation}
\label{smussala!}
u_\varepsilon(x):= \Big(\varepsilon^2+|x|^2\Big)^{\frac{p-N}{2\,(p-1)}}-\varepsilon^\frac{p-N}{p-1},\qquad \mbox{ with } \varepsilon>0,
\end{equation}
we get 
\[
\Lambda_{p, \infty}(B_1\setminus\{0\}) \leq \lim_{\varepsilon\searrow 0}\frac{\displaystyle\int_{B_1} |\nabla u_\varepsilon|^p\,dx}{\|u_\varepsilon\|_{L^\infty(B_1)}^p}=N\,\omega_N\,\left(\frac{p-N}{p-1}\right)^{p-1}.
\]
This gives the desired asymptotic behaviour from above, as well. Finally, for the limit $p\nearrow \infty$ it is sufficient to use Lemma \ref{lemma:Lambda_cap} with $K=B_1$ and $x_0=0$.
\vskip.2cm\noindent
{\bf Part 4: asymptotics for $\beta_{N,p}$.} We recall that this is given by
\[
	\beta_{N,p} =  \max \left\{ \frac{ \Lambda_p(B_1\setminus\{0\})}{(\sqrt{N} + 1)^p} , \left(\frac{p-N}{p}\right)^p \right\}.
	\]
	In particular, by Lemma \ref{lemma:Lambda_cap} we have 
\[			
\beta_{N,p} \geq \max \left\{ \frac{\Lambda_{p,\infty}(B_1\setminus\{0\})}{\omega_N\,(\sqrt{N} + 1)^p} , \left(\frac{p-N}{p}\right)^{p}\right\}.
\]
Thus, the information
\[
0<\liminf_{p\searrow N} \frac{\beta_{N,p}}{(p-N)^{p-1}},
\]	
comes from {\bf Part 3}. The related upper bound can be proved as before, by using \eqref{smussala!} as a test function and taking the limit as $\varepsilon$ goes to $0$.
\par
Finally, as for the limit $p\nearrow \infty$, we observe that by its definition
\[
\liminf_{p\nearrow \infty} \Big(\beta_{N,p}\Big)^\frac{1}{p}\ge \liminf_{p\nearrow \infty} \frac{p-N}{p}=1.
\]
On the other hand, by using \eqref{makai_super_case} with $\Omega=B_1$, we get
\[
\limsup_{p\nearrow \infty} \Big(\beta_{N,p}\Big)^\frac{1}{p}\le \lim_{p\nearrow \infty} \Big(\lambda_{p}(B_1)\Big)^\frac{1}{p}=1,
\]
thanks to \cite[Lemma 1.5]{JLM}.
This concludes the proof.
\end{proof}

\begin{remark}[Asymptotic optimality]
\label{rem:asympN}
We recall that for {\it every} open set $\Omega\subseteq\mathbb{R}^N$, we have 
\[
\lim_{p\to\infty} \Big(\lambda_{p}(\Omega)\Big)^\frac{1}{p}=\lim_{p\to\infty} \Big(\lambda_{p,\infty}(\Omega)\Big)^\frac{1}{p}=\frac{1}{r_{\Omega}},
\]
see \cite[Corollary 6.1 and Corollary 6.4]{BPZ2}.
Thus, the estimates \eqref{makai_super_case} and \eqref{lambda_p_inf} becomes identities in the limit as $p$ goes to $\infty$. 
\par
As for the case when $p$ goes to $N$: from Lemma \ref{lemma:forelli} we know that
\[
\lambda_{p}(\mathbb{R}^N\setminus \mathbb{Z}^N)=\Lambda_{p}(Q_{1/2}\setminus\{0\}).
\]
The last quantity can be estimated from above by using the test function \eqref{smussala!}, as before. This gives
\[
\limsup_{p\searrow N} \frac{\lambda_{p}(\mathbb{R}^N\setminus \mathbb{Z}^N)}{(p-N)^{p-1}}<+\infty,
\]
and thus the constant $\beta_{N,p}$ in \eqref{makai_super_case} vanishes with the sharp decay rate. Finally, from both the definition of $\lambda_{p,\infty}$ and that of $p-$capacity, we have 
\[
\lambda_{p,\infty}(B_1)\le \mathrm{cap}_p(\{0\};B_1)=N\,\omega_N\,\left(\frac{p-N}{p-1}\right)^{p-1} .
\]
Thus, also the constant in \eqref{lambda_p_inf} has the sharp decay rate to $0$.
\end{remark}

By a simple interpolation argument, we can now get a geometric lower bound for the generalized principal frequencies in the whole range $p\le q \le \infty$, whenever $p > N$. Thanks to the previous discussion, this comes with a constant having a good behaviour in both asymptotics regime $p\searrow N$ and $p\nearrow \infty$.
 \begin{corollary}[Interpolation]
 \label{coro:interpol}
 Let $1\le N<p\le q\le \infty$ and let $\Omega \subseteq \mathbb{R}^N$ be an open set, with finite inradius $r_\Omega$. Then 
 \begin{equation}
 		\lambda_{p, q}(\Omega) \geq \Big(\beta_{N,p}\Big)^{\frac{p}{q}}\, \Big(\Lambda_{p, \infty}(B_1\setminus\{0\})\Big)^{1-\frac{p}{q}}\, \left(\frac{1}{r_{\Omega}}\right)^{p-N+N\frac{p}{q}},
 	\end{equation}
	where $\beta_{N,p}$ is the same as in \eqref{makai_super_case}.
 \end{corollary}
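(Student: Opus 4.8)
The plan is to obtain the estimate directly from the two endpoint inequalities of Theorem~\ref{thm:endpoint}, by interpolating Lebesgue norms. First I would fix an arbitrary $u\in C^\infty_0(\Omega)$ and set $\theta:=p/q\in(0,1]$. Since $p\le q\le\infty$, one has $\frac1q=\frac{\theta}{p}+\frac{1-\theta}{\infty}$, so the classical interpolation inequality gives
\[
\|u\|_{L^q(\Omega)}\le \|u\|_{L^p(\Omega)}^{\theta}\,\|u\|_{L^\infty(\Omega)}^{1-\theta},
\]
and hence, raising to the $p$-th power, $\|u\|_{L^q(\Omega)}^p\le \|u\|_{L^p(\Omega)}^{p\theta}\,\|u\|_{L^\infty(\Omega)}^{p(1-\theta)}$.

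Next I would insert the two bounds furnished by Theorem~\ref{thm:endpoint}, namely $\|u\|_{L^p(\Omega)}^p\le \beta_{N,p}^{-1}\,r_\Omega^{p}\,\|\nabla u\|_{L^p(\Omega)}^p$, coming from \eqref{makai_super_case}, and $\|u\|_{L^\infty(\Omega)}^p\le \Lambda_{p,\infty}(B_1\setminus\{0\})^{-1}\,r_\Omega^{p-N}\,\|\nabla u\|_{L^p(\Omega)}^p$, coming from \eqref{lambda_p_inf}. Taking the $\theta$-power of the first and the $(1-\theta)$-power of the second and multiplying, the gradient term recombines with exponent $p\theta+p(1-\theta)=p$, while the powers of the inradius combine to $r_\Omega^{\,p\theta+(p-N)(1-\theta)}=r_\Omega^{\,p-N+N\theta}$. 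This yields
\[
\|u\|_{L^q(\Omega)}^p\le \beta_{N,p}^{-\theta}\,\Lambda_{p,\infty}(B_1\setminus\{0\})^{-(1-\theta)}\,r_\Omega^{\,p-N+N\theta}\,\|\nabla u\|_{L^p(\Omega)}^p,
\]
so dividing by $\|u\|_{L^q(\Omega)}^p$ and taking the infimum over $u\in C^\infty_0(\Omega)\setminus\{0\}$ gives exactly the claimed lower bound for $\lambda_{p,q}(\Omega)$ with $\theta=p/q$.

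Finally I would record the boundary cases: for $q=\infty$ one has $\theta=0$ and the statement is precisely \eqref{lambda_p_inf}, whereas for $q=p$ one has $\theta=1$ and it reduces to \eqref{makai_super_case}; for $p<q<\infty$ the positivity of both $\beta_{N,p}$ and $\Lambda_{p,\infty}(B_1\setminus\{0\})$, already established in Theorem~\ref{thm:endpoint}, ensures the right-hand side is a genuine positive quantity. There is no real obstacle here — the proof is a one-line interpolation combined with the two endpoint estimates; the only point deserving a moment of care is the exponent bookkeeping for the powers of $\|\nabla u\|_{L^p(\Omega)}$ and of $r_\Omega$, which is the elementary computation sketched above.
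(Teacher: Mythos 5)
Your proposal is correct and follows essentially the same route as the paper: the interpolation inequality $\|u\|_{L^q(\Omega)}\le \|u\|_{L^p(\Omega)}^{p/q}\,\|u\|_{L^\infty(\Omega)}^{1-p/q}$ yields $\lambda_{p,q}(\Omega)\ge \big(\lambda_{p,\infty}(\Omega)\big)^{1-p/q}\big(\lambda_p(\Omega)\big)^{p/q}$, which is then combined with the two endpoint bounds \eqref{makai_super_case} and \eqref{lambda_p_inf}. The exponent bookkeeping $p\,\tfrac{p}{q}+(p-N)\big(1-\tfrac{p}{q}\big)=p-N+N\,\tfrac{p}{q}$ is exactly as in the paper, so there is nothing to add.
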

 \begin{proof}
 	For every $u \in C^{\infty}_0(\Omega) \backslash \{0\}$ we have \[
 	\| u\|_{L^q(\Omega)}^p \leq \left( \| u\|_{L^{\infty}(\Omega)}^{1-\frac{p}{q}} \right)^p \left(\| u\|_{L^p(\Omega)}^{\frac{p}{q}} \right)^p,
 	\]
 	then
 	\[
 	\frac{\|\nabla u\|^p_{L^p(\Omega)}}{\|u\|^p_{L^q(\Omega)}} \geq \left(\frac{\|\nabla u\|^p_{L^p(\Omega)}}{\| u\|^p_{L^{\infty}(\Omega)}}\right)^{1-\frac{p}{q}} \left(\frac{\|\nabla u\|^p_{L^p(\Omega)}}{\| u\|^p_{L^{p}(\Omega)}}\right)^{\frac{p}{q}}.
 	\]
	This entails that
 	 \[
 	\lambda_{p,q}(\Omega) \geq \Big(\lambda_{p, \infty}(\Omega)\Big)^{1-\frac{p}{q}}\, \Big(\lambda_p(\Omega)\Big)^{\frac{p}{q}}.
 	\]
 	Hence, the thesis follows by combining \eqref{makai_super_case} and \eqref{lambda_p_inf}.
	 \end{proof}

\section{Cheeger's constant and Buser's inequality}
\label{sec:5}
We recall the definition of {\it Cheeger constant} for an open set $\Omega\subseteq\mathbb{R}^N$. This is given by 
\[
h(\Omega)=\inf\left\{\frac{\mathcal{H}^{N-1}(\partial E)}{|E|}\, :\, E\Subset\Omega \mbox{ has a smooth boundary}\right\}.
\]
We point out that other definitions are possible, see for example the survey papers \cite{Leo} and \cite{Pa2}. The above definition is in the spirit of the original analogous quantity introduced by Cheeger in \cite{cheeger} (and especially by Buser, see \cite[equation (1.5)]{Bu2}) in the context of Riemannian manifolds. 
\par
Our choice is motivated by the following result, which is quite classical. Its proof is based on the Coarea Formula, see for example \cite[Theorem 2.3.1]{Maz}. It asserts that the geometric constant $h(\Omega)$ coincides with a generalized principal frequency.
\begin{lemma}
\label{lemma:cheegerclassic}
Let $N\ge 1$ and let $\Omega\subseteq\mathbb{R}^N$ be an open set. Then 
\[
\lambda_{1,1}(\Omega)=h(\Omega).
\]
\end{lemma}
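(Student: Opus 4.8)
The plan is to prove the equality $\lambda_{1,1}(\Omega)=h(\Omega)$ by establishing the two inequalities separately, using the Coarea Formula as the bridge between the $BV$-type functional on the left and the perimeter-based geometric quantity on the right.

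\textbf{The inequality $h(\Omega)\le\lambda_{1,1}(\Omega)$.} First I would take any $u\in C^\infty_0(\Omega)\setminus\{0\}$ and, without loss of generality, assume $u\ge 0$ by replacing $u$ with $|u|$ (this does not increase $\|\nabla u\|_{L^1}$ and does not change $\|u\|_{L^1}$). For $t>0$ set $E_t=\{u>t\}$; by Sard's Lemma almost every $t$ is a regular value, so that $E_t$ is an open set with smooth boundary and $E_t\Subset\Omega$. The Coarea Formula gives
\[
\int_\Omega|\nabla u|\,dx=\int_0^{+\infty}\mathcal{H}^{N-1}(\partial E_t)\,dt\ge\int_0^{+\infty}h(\Omega)\,|E_t|\,dt=h(\Omega)\,\int_0^{+\infty}|E_t|\,dt=h(\Omega)\,\|u\|_{L^1(\Omega)},
\]
where the last equality is the layer-cake (Cavalieri) formula $\int_0^\infty|\{u>t\}|\,dt=\int_\Omega u\,dx$. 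Dividing by $\|u\|_{L^1(\Omega)}$ and taking the infimum over $u$ yields $\lambda_{1,1}(\Omega)\ge h(\Omega)$.

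\textbf{The inequality $\lambda_{1,1}(\Omega)\le h(\Omega)$.} For this I would start from an arbitrary competitor $E\Subset\Omega$ with smooth boundary in the definition of $h(\Omega)$ and approximate its characteristic function $1_E$ by smooth compactly supported functions in $\Omega$. Concretely, one takes a mollification or a suitable cutoff/truncation of the signed distance function to $\partial E$ to produce $u_\varepsilon\in C^\infty_0(\Omega)$ with $u_\varepsilon\to 1_E$ in $L^1(\Omega)$ and $\int_\Omega|\nabla u_\varepsilon|\,dx\to\mathcal{H}^{N-1}(\partial E)=\mathrm{Per}(E)$; this is the standard fact that $1_E\in BV$ and that for a smooth bounded $E$ the total variation of $D1_E$ equals $\mathcal{H}^{N-1}(\partial E)$, combined with density of smooth functions (with strict convergence of the energies). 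Then
\[
\lambda_{1,1}(\Omega)\le\liminf_{\varepsilon\to 0}\frac{\int_\Omega|\nabla u_\varepsilon|\,dx}{\|u_\varepsilon\|_{L^1(\Omega)}}=\frac{\mathcal{H}^{N-1}(\partial E)}{|E|},
\]
and taking the infimum over all admissible $E$ gives $\lambda_{1,1}(\Omega)\le h(\Omega)$. Combining the two inequalities proves the claim.

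\textbf{Main obstacle.} The genuinely delicate point is the approximation step in the second inequality: one must build $u_\varepsilon\in C^\infty_0(\Omega)$ whose gradient $L^1$-norms converge to the perimeter of $E$ (not merely are bounded by it, which lower semicontinuity would anyway give for free on the other side), while keeping the supports inside $\Omega$. Since $E\Subset\Omega$ has smooth boundary, this is handled classically—e.g.\ compose the signed distance to $\partial E$ with a sequence of smooth increasing approximations of the Heaviside function supported in a shrinking tubular neighbourhood of $\partial E$, and use the Coarea Formula again together with $\mathcal H^{N-1}(\{d_{\partial E}=s\})\to \mathcal H^{N-1}(\partial E)$ as $s\to 0$—but it is the one place where the smoothness of $\partial E$ and the strict containment $E\Subset\Omega$ are really used. (Alternatively, one can invoke the known identity $\lambda_{1,1}(\Omega)=\inf\{\mathrm{Per}(E)/|E|:E\subseteq\Omega\text{ of finite perimeter}\}$ and reconcile the two definitions of the Cheeger constant, but the direct smoothing argument is self-contained.)
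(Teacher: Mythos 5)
Your proof is correct and is precisely the classical coarea-based argument which the paper does not spell out but merely cites (Maz'ya, Theorem 2.3.1): coarea plus the layer-cake formula give $h(\Omega)\le\lambda_{1,1}(\Omega)$, and mollification of $1_E$ (with $\varepsilon<\mathrm{dist}(\overline E,\partial\Omega)$, so that $\limsup_\varepsilon\int_\Omega|\nabla u_\varepsilon|\,dx\le \mathcal{H}^{N-1}(\partial E)$ while $u_\varepsilon\to 1_E$ in $L^1$) gives the reverse inequality. One small repair in the first step: after replacing $u$ by $|u|$ you cannot invoke Sard's Lemma for the merely Lipschitz function $|u|$; apply it to the smooth $u$ instead, so that for a.e. $t>0$ both $t$ and $-t$ are regular values, $\{|u|>t\}=\{u>t\}\cup\{u<-t\}$ is open with smooth boundary, compactly contained in $\Omega$, and $|\nabla |u||=|\nabla u|$ a.e., after which your computation goes through verbatim.
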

By combining Lemma \ref{lemma:cheegerclassic} and Theorem \ref{thm:makai-hayman-multiply}, we immediately get the following lower bound on the Cheeger constant of a set, in terms of both its inradius and topology.
\begin{corollary}
\label{coro:cheeger}
Let $k\in\mathbb{N}\setminus\{0\}$, for every $\Omega \subseteq \mathbb{R}^2$ open multiply connected set of order $k$ with finite inradius $r_\Omega$, we have 
\begin{equation} 
\label{cheeger_lower}
        h(\Omega) \geq  \frac{\Theta_{1,1}}{\sqrt{k}}\,\frac{1}{r_{\Omega}},
    \end{equation}
    where $\Theta_{1,1}$ is the same constant as in Theorem \ref{thm:makai-hayman-multiply}.
\end{corollary}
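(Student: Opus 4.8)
The plan is simply to chain together the two results established just before the statement. First I would verify that the degenerate choice $p=q=1$ is admissible in Theorem \ref{thm:makai-hayman-multiply}: we are in the regime $1\le p<2$, so the requirement is $q<p^{*}=2$ together with $q\ge p$; both hold since $q=1<2$ and $1\ge 1$. Hence Theorem \ref{thm:makai-hayman-multiply} may be applied with $p=q=1$ to any $\Omega\subseteq\mathbb{R}^2$ multiply connected of order $k$ with finite inradius.

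Second, I would compute the exponent appearing in \eqref{makai-hayman-stima-2} for this choice, namely
\[
p-2+\frac{2\,p}{q}=1-2+2=1,
\]
so that Theorem \ref{thm:makai-hayman-multiply} gives directly
\[
\lambda_{1,1}(\Omega)\ge \Theta_{1,1}\,\frac{1}{\sqrt{k}\,r_\Omega}.
\]
Finally, I would invoke Lemma \ref{lemma:cheegerclassic}, which identifies $\lambda_{1,1}(\Omega)=h(\Omega)$, and substitute this into the left-hand side to obtain \eqref{cheeger_lower} with exactly the constant $\Theta_{1,1}$ from Theorem \ref{thm:makai-hayman-multiply}.

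There is essentially no obstacle here: the statement is a genuine corollary, and the only point deserving a word of care is the bookkeeping for the boundary value $p=q=1$ of the exponent range (checking admissibility and that the power of $1/(\sqrt{k}\,r_\Omega)$ collapses to $1$). One might additionally remark that the optimality of the dependence on $k$, already recorded in Remark \ref{rem:optimal}, transfers verbatim to \eqref{cheeger_lower}, and indeed underlies the discussion of Buser's inequality in the remainder of Section \ref{sec:5}.
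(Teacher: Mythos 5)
Your argument is exactly the paper's: the corollary is obtained by applying Theorem \ref{thm:makai-hayman-multiply} with $p=q=1$ (which is admissible since $q=1<p^*=2$ and $q\ge p$, with exponent $p-2+\tfrac{2p}{q}=1$) and then invoking the identification $\lambda_{1,1}(\Omega)=h(\Omega)$ of Lemma \ref{lemma:cheegerclassic}. The proposal is correct and matches the paper's (unwritten, immediate) proof.
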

\begin{remark}
It is easily seen that the geometric lower bound \eqref{cheeger_lower} {\it is not possible} for the following alternative definition of Cheeger's constant
\[
h_{\rm DG}(\Omega)=\inf\left\{\frac{P(E)}{|E|}\, :\, E\subseteq \Omega \mbox{ with } |E|>0\right\},
\]
where $P(E)$ is the {\it distributional perimeter} of $E$, in the sense of De Giorgi. This is another possible definition of Cheeger's constant, considered in many papers (in addition to the aforementioned references \cite{Leo} and \cite{Pa2}, we refer for example to \cite{CS, Gri, KLV, KP, LeNeSa}  and \cite{LePr} among others). In general, we have $h_{\rm DG}(\Omega)< h(\Omega)$, see for example \cite[Section 3]{LuMaRu}.
\par
Since the notion of distributional perimeter is not affected by the removal of sets with zero $N-$dimensional Lebesgue measure, we can easily build a counter-example to the validity of \eqref{cheeger_lower} for $h_{\rm DG}$. For example, by taking the following {\it infinite complement comb}
\[
\Omega=\mathbb{R}^2\setminus\{x=(x_1,x_2)\in\mathbb{R}^2\, :\, |x_1|\ge 1, x_2\in\mathbb{Z}\},
\] 
we see that this is a simply connected open set, such that
\[
r_{\Omega}=\sqrt{2}\qquad \mbox{ and }\qquad h_{\rm DG}(\Omega)\le\lim_{n\to\infty} \frac{P((-n,n)\times(-n,n))}{|(-n,n)\times(-n,n)|}=\lim_{n\to\infty}\frac{8\,n}{4\,n^2}=0.
\]
\end{remark}
\medskip
As explained in the Introduction, the constant $h$ plays an important role in Spectral Geometry, because of the celebrated universal lower bound \eqref{cheeger}. 
For general open sets, it is not possible to revert the estimate \eqref{cheeger}, i.e. to get a so-called {\it Buser inequality} of the form
\[
\lambda(\Omega)\le C\,\Big(h(\Omega)\Big)^2.
\]
As already seen, this becomes feasible when suitable geometric assumptions are taken on the open sets. 
Thanks to Corollary \ref{coro:cheeger}, we can easily obtain Buser's inequality for multiply connected open sets in the plane. As simple as the proof is, the result deserves to be stated explicitly. 
\begin{theorem}[Buser's inequality for planar sets]
\label{thm:buser}
For every $\Omega \subseteq \mathbb{R}^2$ open multiply connected set of order $k \in \mathbb{N} \backslash \{0\}$, we have 
\[
\lambda(\Omega)\le \left(\frac{j_{0,1}}{\Theta_{1,1}}\right)^2\,k\,\Big(h(\Omega)\Big)^2,
\]
where $\Theta_{1,1}$ is the same constant as in Theorem \ref{thm:makai-hayman-multiply} and $j_{0,1}$ is the first zero of the Bessel function of the first kind $J_0$ (see for example \cite[page 11]{Hen} for an approximate value).
\end{theorem}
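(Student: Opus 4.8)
The plan is to simply combine the trivial upper bound \eqref{minchions} with the lower bound on the Cheeger constant already established in Corollary \ref{coro:cheeger}; no new idea is required, the statement merely repackages that corollary spectrally. First I would dispose of the degenerate case: if $r_\Omega=+\infty$, then $\Omega$ contains balls of arbitrarily large radius, so by the monotonicity of $\lambda$ with respect to set inclusion together with its scaling properties we get $\lambda(\Omega)=0$, and the claimed inequality holds trivially (note that in this case one also has $h(\Omega)=0$, consistently with \eqref{cheeger_lower}). Hence from now on I may assume $r_\Omega<+\infty$.

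Next I would recall that $\lambda(B_1(0))=j_{0,1}^2$, i.e. the first Dirichlet eigenvalue of the planar unit disk equals the square of the first zero of the Bessel function $J_0$. Inserting this into \eqref{minchions} gives
\[
\lambda(\Omega)\le \frac{j_{0,1}^2}{r_\Omega^2}.
\]
On the other hand, Corollary \ref{coro:cheeger} yields $h(\Omega)\ge \Theta_{1,1}/(\sqrt{k}\,r_\Omega)$, which rearranges to
\[
\frac{1}{r_\Omega}\le \frac{\sqrt{k}}{\Theta_{1,1}}\,h(\Omega).
\]
Squaring the last inequality and substituting into the previous display produces
\[
\lambda(\Omega)\le j_{0,1}^2\cdot\frac{k}{\Theta_{1,1}^2}\,\big(h(\Omega)\big)^2=\left(\frac{j_{0,1}}{\Theta_{1,1}}\right)^2 k\,\big(h(\Omega)\big)^2,
\]
which is the asserted estimate.

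I do not expect any genuine obstacle: the only points demanding a word of care are the identification $\lambda(B_1(0))=j_{0,1}^2$ and the handling of the degenerate case $r_\Omega=+\infty$, both entirely routine. The whole content of the theorem is already carried by Corollary \ref{coro:cheeger} (equivalently, by Theorem \ref{thm:makai-hayman-multiply} in the case $p=q=1$), so the proof is essentially a two-line computation chaining that lower bound with the universal upper bound \eqref{minchions}.
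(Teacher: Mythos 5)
Your argument is correct and coincides with the paper's own proof: both handle the case $r_\Omega=+\infty$ via the trivial bound from an inscribed disk, and in the case $r_\Omega<+\infty$ chain the upper bound $\lambda(\Omega)\le j_{0,1}^2/r_\Omega^2$ from \eqref{minchions} with the Cheeger lower bound \eqref{cheeger_lower} of Corollary \ref{coro:cheeger}. Nothing is missing.
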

\begin{proof}
We first observe that if $B_{r}(x_0)\subseteq\Omega$, then by monotonicity with respect to set inclusion we have 
\[
\lambda(\Omega)\le \frac{\lambda(B_1)}{r^2}=\frac{(j_{0,1})^2}{r^2}\qquad \mbox{ and }\qquad h(\Omega)\le \frac{\mathcal{H}^{N-1}(\partial B_r(x_0))}{|B_r(x_0)|}=\frac{N}{r}.
\]
For the value of $\lambda(B_1)$ we refer to \cite[Proposition 1.2.14]{Hen}. 
\par
Thus, if $\Omega$ has infinite inradius, from the previous upper bounds we get $\lambda(\Omega)=h(\Omega)=0$ and the result trivially follows. In the case $r_\Omega<+\infty$, it is sufficient to combine \eqref{cheeger_lower} with 
\[
\lambda(\Omega)\le \frac{(j_{0,1})^2}{r^2_\Omega}.
\]
This concludes the proof.
\end{proof}
\begin{remark}
We observe that with exactly the same proof, one can obtain the following Buser--type inequality, for the whole family of generalized principal frequencies: for every $\Omega \subseteq \mathbb{R}^2$ open multiply connected set of order $k \in \mathbb{N} \backslash \{0\}$ and every $1\le q$ which satisfies \eqref{esponenti}, we have
\[
\lambda_{p,q}(\Omega) \leq C\, k^{\frac{p-2}{2} +\frac{p}{q}}\,\Big( h(\Omega)\Big)^{p-2+2\frac{p}{q}},
\]
with the constant $C$ given by 
\[
\frac{\lambda_{p,q}(B_1)}{(\Theta_{1,1})^{p-2+2\frac{p}{q}}}.
\]
Observe that this is now valid for the sub-homogeneous regime $1\le q<p$, as well. In particular, by recalling that $\lambda_{2,1}(\Omega)$ coincides with the reciprocal of the so-called {\it torsional rigidity} $T(\Omega)$, we get the following inequality
\[
\frac{1}{C\,k^2}\,\le \Big(h(\Omega)\Big)^4\,T(\Omega),\qquad \mbox{ with } C=(\Theta_{1,1})^4\,\frac{\pi}{8}.
\]
We also used that $T(B_1)=1/\lambda_{2,1}(B_1)=\pi/8$, in dimension $N=2$.
We refer to \cite{LuMaRu} for a study of this kind of inequality, sometimes called {\it Cheeger--Kohler-Jobin inequality}.
\end{remark}
For every $k\in\mathbb{N}\setminus\{0\}$, we define the sharp constant in the previous Buser inequality, i.e. we set
\[
\mathcal{C}_{\rm B}(k):=\sup\left\{\frac{\lambda(\Omega)}{\Big(h(\Omega)\Big)^2}\, :\, \Omega\subseteq\mathbb{R}^2 \mbox{ multiply connected of order } k \mbox{ with } r_\Omega<+\infty\right\}.
\]
Its precise value is known for $k=1$ and $k=2$ only, see the recent paper \cite{CLS}. For large values of $k$,  
in light of Theorem \ref{thm:buser}, we know that such a constant grows at most like $k$. In the next result, we show that this growth is ``essentially'' sharp.
\begin{proposition}
\label{prop:buserconstant}
The quantity $k\mapsto \mathcal{C}_{\rm B}(k)$ is monotone non-decreasing. Moreover, for every $0<\alpha<1$, we have 
\[
\lim_{k\to\infty} \frac{\mathcal{C}_{\rm B}(k)}{k^\alpha}=+\infty.
\]
\end{proposition}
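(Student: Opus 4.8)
The plan is to prove the two assertions separately, using the monotonicity in the proof of the second one.

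\emph{Monotonicity.} Fix $\Omega\subseteq\mathbb R^2$ multiply connected of order $k$ with $r_\Omega<+\infty$ and pick $x_0\in\Omega$. For small $\varepsilon>0$ set $\Omega_\varepsilon:=\Omega\setminus\overline{B_\varepsilon(x_0)}$: this is open, connected, its complement in the one-point compactification of $\mathbb R^2$ has exactly $k+1$ connected components, and $r_{\Omega_\varepsilon}\le r_\Omega<+\infty$, so $\Omega_\varepsilon$ is admissible in the definition of $\mathcal C_{\rm B}(k+1)$. By monotonicity under set inclusion, $\lambda(\Omega_\varepsilon)\ge\lambda(\Omega)$. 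For the Cheeger constant, given any smooth $E\Subset\Omega$ with $\mathcal H^1(\partial E)/|E|$ close to $h(\Omega)$, possibly after an arbitrarily small dilation of $E$ so that $x_0$ becomes interior, the set $E\setminus\overline{B_{2\varepsilon}(x_0)}$ is a smooth set compactly contained in $\Omega_\varepsilon$ whose perimeter‑to‑volume ratio is at most $(\mathcal H^1(\partial E)+4\pi\varepsilon)/(|E|-4\pi\varepsilon^2)$; letting $\varepsilon\to0^+$ and then optimizing in $E$ gives $\limsup_{\varepsilon\to0^+}h(\Omega_\varepsilon)\le h(\Omega)$. Since $h(\Omega)>0$ by Corollary \ref{coro:cheeger}, these two facts yield
\[
\liminf_{\varepsilon\to0^+}\frac{\lambda(\Omega_\varepsilon)}{h(\Omega_\varepsilon)^2}\ \ge\ \frac{\lambda(\Omega)}{h(\Omega)^2},
\]
whence $\mathcal C_{\rm B}(k+1)\ge\lambda(\Omega)/h(\Omega)^2$; taking the supremum over all such $\Omega$ gives $\mathcal C_{\rm B}(k+1)\ge\mathcal C_{\rm B}(k)$.

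\emph{Divergence.} For each integer $m\ge3$ I would use the perforated square
\[
\Omega_m:=(0,m)^2\setminus\bigcup_{\mathbf i\in\{1,\dots,m\}^2}\overline{B_{1/m}\!\left(\mathbf i-\tfrac12(1,1)\right)},
\]
which is connected, multiply connected of order $m^2+1$, and has finite inradius. To bound $\lambda(\Omega_m)$ from below, apply Theorem \ref{thm:mazya_poincare} with $N=p=q=2$ on each of the $m^2$ unit cubes $Q_{1/2}(\mathbf i-\tfrac12(1,1))$, taking $\Sigma$ equal to the hole contained in that cube and $D=1$: any $u\in C^\infty_0(\Omega_m)$ restricts on each cube to a smooth function whose support is at positive distance from that hole, and, computing the capacitary potential (a truncated logarithm), $\mathrm{cap}_2(\overline{B_{1/m}(y)};B_1(y))=2\pi/\log m$. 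Summing the resulting inequalities over the $m^2$ cubes, which tile $(0,m)^2\supseteq\Omega_m$, gives
\[
\int_{\Omega_m}|\nabla u|^2\,dx\ \ge\ \frac{8\pi\,\mathscr C^2}{\log m}\int_{\Omega_m}|u|^2\,dx\qquad\text{for all }u\in C^\infty_0(\Omega_m),
\]
where $\mathscr C=\mathscr C(2,2,2,2)>0$ is the constant of Theorem \ref{thm:mazya_poincare}, so that $\lambda(\Omega_m)\ge 8\pi\mathscr C^2/\log m$. To bound $h(\Omega_m)$ from above, use (a standard inner smooth approximation of) $\Omega_m$ itself as a competitor: $\mathcal H^1(\partial\Omega_m)=4m+2\pi m$ and $|\Omega_m|=m^2-\pi\ge m^2/2$, so $h(\Omega_m)\le(4+2\pi)m/(m^2/2)=(8+4\pi)/m$.

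Combining, $\mathcal C_{\rm B}(m^2+1)\ge\lambda(\Omega_m)/h(\Omega_m)^2\ge c_\star\,m^2/\log m$ for an absolute constant $c_\star>0$. For arbitrary large $k$, choosing $m=\lfloor\sqrt{k-1}\rfloor$ one has $m^2+1\le k$ and $m\ge\tfrac13\sqrt k$, so by the monotonicity proved above $\mathcal C_{\rm B}(k)\ge\mathcal C_{\rm B}(m^2+1)\ge c_\star\,m^2/\log m\ge c\,k/\log k$; dividing by $k^\alpha$ and letting $k\to\infty$ gives $\mathcal C_{\rm B}(k)/k^\alpha\to+\infty$ for every $0<\alpha<1$. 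The two points needing care are, on the Cheeger side, the justification that the piecewise‑smooth set $\Omega_m$ may be replaced by a smooth set compactly contained in it with essentially the same perimeter‑to‑volume ratio (a routine inner approximation), and, on the spectral side, the necessity of using the \emph{sharp} logarithmic capacity $2\pi/\log m$ rather than the cruder projection bound of Lemma \ref{cap_length}: the latter would only give $\lambda(\Omega_m)\gtrsim 1/m$, hence $\mathcal C_{\rm B}(k)\gtrsim\sqrt k$, which is too weak to beat $k^\alpha$ for $\alpha\in[1/2,1)$.
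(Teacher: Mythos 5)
Your proposal is correct, and its core — the divergence estimate — follows essentially the same route as the paper: a periodically perforated square, the Maz'ya--Poincar\'e inequality of Theorem \ref{thm:mazya_poincare} applied cell by cell with the logarithmically small relative capacity of the holes to get $\lambda\gtrsim 1/\log$, and the perforated square itself (after a standard inner smooth approximation) as a Cheeger competitor to get $h\lesssim k^{-1/2}$, yielding $\mathcal{C}_{\rm B}\gtrsim k/\log k$. The differences are minor. For monotonicity, you remove small closed disks $\overline{B_\varepsilon(x_0)}$ and pass to the limit $\varepsilon\to 0^+$, modifying Cheeger competitors by hand; the paper simply removes the single point $x_0$, observing that points in the plane have zero $p$-capacity for $1\le p\le 2$, so that both $\lambda$ and $h=\lambda_{1,1}$ are literally unchanged — slicker, but your limiting argument works (the only delicate point, smoothness of $E\setminus\overline{B_{2\varepsilon}(x_0)}$ when $x_0\in\partial E$, is handled by your small dilation, or by choosing a generic radius and smoothing). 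For divergence, your family only realizes the orders $k=m^2+1$, and you use monotonicity to interpolate to all $k$; the paper instead attaches a perforated strip so as to hit every order directly, takes hole radius $k^{-\beta}$ with $\beta>1/2$, and moreover proves matching upper bounds ($\lambda(\Omega_k)\lesssim 1/\log k$ and $h(\Omega_k)\gtrsim k^{-1/2}$) which are not needed for the proposition but motivate the open problem on the sharp growth $k/\log k$. Note that your choice of radius $1/m$ (i.e.\ $\beta=1/2$ in the paper's notation) is still fine for the Cheeger upper bound, since the total hole perimeter $2\pi m$ is then merely comparable to, rather than negligible with respect to, the outer perimeter $4m$, which only affects the constant; and your sharp concentric value $\mathrm{cap}_2(\overline{B_{1/m}};B_1)=2\pi/\log m$ plays the role of the paper's citation of Maz'ya's formula for the relative capacity of a disk.
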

\begin{proof}
For the monotonicity part, it is sufficient to proceed as follows: if $\Omega\subseteq\mathbb{R}^2$ is admissible for $\mathcal{C}_{\rm B}(k)$, then the set $\widetilde{\Omega}=\Omega\setminus\{x_0\}$ with $x_0\in\Omega$ is admissible for $\mathcal{C}_{\rm B}(k+1)$ and we have
\[
\frac{\lambda(\Omega)}{\Big(h(\Omega)\Big)^2}=\frac{\lambda(\widetilde\Omega)}{\Big(h(\widetilde\Omega)\Big)^2}.
\] 
This is due to the fact that points in dimension $N=2$ have zero $p-$capacity, for every $1\le p\le 2$.
\vskip.2cm\noindent
For the second part of the statement, we are going to exhibit a sequence of open sets $\{\Omega_k\}_{k\ge 2}$ such that each $\Omega_k$ is multiply connected of order $k+1$, it has finite inradius and 
\begin{equation}
\label{francesco}
\lim_{k\to\infty} \frac{\lambda(\Omega_k)}{k^\alpha\,\Big(h(\Omega_k)\Big)^2}=+\infty, \qquad \mbox{ for every } 0<\alpha<1.
\end{equation}
At this aim, we will slightly modify the construction of \cite[Theorem 1.2, point (2)]{BB}. We will produce a sequence of enlarging periodically perforated sets, such that the radius of the perforation shrinks ``not too fast'' as the sets grow.
\par
Let $k\ge 2$ be a natural number and let $\varepsilon_k=k^{-\beta}$ for some fixed $\beta>1/2$, we indicate by
\[
\mathring{Q}_k:=\Big([0,1]\times[0,1]\Big)\setminus B_{\varepsilon_k}\left(\frac{1}{2},\frac{1}{2}\right).
\]
The parameter $\varepsilon_k$ will be the shrinking radius of the perforation.
If we set 
\[
\mathcal{I}_k=\Big\{\mathbf{i}=(i_1,i_2)\in\mathbb{N}^2\, :\, \max\{i_1,\,i_2\}\le {\lfloor \sqrt{k}\rfloor}-1\Big\},
\]
we define
\[
\mathcal{Q}_k=\bigcup_{\mathbf{i}\in \mathcal{I}_k} (\mathring{Q}_k+\mathbf{i}).
\] 
Observe that this is a square with side length $\lfloor \sqrt{k}\rfloor$, containing $(\lfloor \sqrt{k}\rfloor)^2$ equally spaced circular holes of radius $\varepsilon_k$. To this set, whenever $\sqrt{k}\not\in\mathbb{N}$, we attach the perforated horizontal strip
\[
\mathcal{S}_k=\bigcup_{j=0}^{k - \lfloor \sqrt{k} \rfloor^2-1}(\mathring{Q}_k-\mathbf{e}_2+j\,\mathbf{e}_1),
\] 
where $\mathbf{e}_1=(1,0)$ and $\mathbf{e}_2=(0,1)$.
At last, we define 
\[
\Omega_k := \text{int}(\mathcal{Q}_k \cup \mathcal{S}_k),
\]
i.e. the interior of this union (see Figure \ref{fig:optimality_buser_fig}). By construction, this is an open multiply connected set of order $k+1$. Also observe that the inradius $r_{\Omega_k}$ is uniformly bounded, with respect to $k$.
\vskip.2cm\noindent
{\it Estimate for $\lambda(\Omega_k)$.} For every $u \in C^{\infty}_0(\Omega_k)$, by applying Theorem \ref{thm:mazya_poincare} with $d=1/2$ and $D=1$, we get
\[
\begin{split}
\int_{\Omega_k} |\nabla u|^2\,dx&=\int_{\mathcal{Q}_k} |\nabla u|^2\,dx+\int_{\mathcal{S}_k} |\nabla u|^2\,dx\\
&=\sum_{\mathbf{i}\in \mathcal{I}_k}\,\int_{\mathring{Q}_k+\mathbf{i}} |\nabla u|^2\,dx+\sum_{j=0}^{k - \lfloor \sqrt{k} \rfloor^2-1}\,\int_{\mathring{Q}_k-\mathbf{e}_2+j\,\mathbf{e}_1} |\nabla u|^2\,dx\\
&\ge C\,\mathrm{cap}_2(B_{\varepsilon_k};B_1)\,\left(\sum_{\mathbf{i}\in \mathcal{I}_k}\,\int_{\mathring{Q}_k+\mathbf{i}} |u|^2\,dx+\sum_{j=0}^{k - \lfloor \sqrt{k} \rfloor^2-1}\,\int_{\mathring{Q}_k-\mathbf{e}_2+j\,\mathbf{e}_1} |u|^2\,dx\right)\\
&= C\,\mathrm{cap}_2(B_{\varepsilon_k};B_1)\,\int_{\Omega_k} |u|^2\,dx.
\end{split}
\]
By arbitrariness of $u$ and by using \cite[formula (2.2.14)]{Maz} for the relative capacity of a disk, we can infer existence of a constant $C_0>0$ such that 
\begin{equation}
\label{lambda_2_omega_k}
\lambda(\Omega_k)\ge \frac{C_0}{|\log \varepsilon_k|}=\frac{C_0}{\beta\,|\log k|},
\end{equation}
since $\varepsilon_k=k^{-\beta}$.
   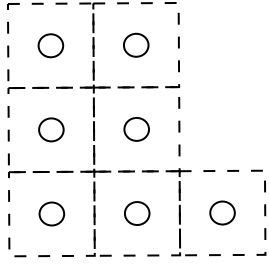
\begin{figure}
   	\centering
   	
   	\tikzset{every picture/.style={line width=0.75pt}} 
   	
   	\begin{tikzpicture}[x=0.3pt,y=0.3pt,yscale=-1,xscale=1]
   		
   		\draw  [dash pattern={on 4.5pt off 4.5pt}] (225.17,76.72) -- (332.77,76.1) -- (333.38,182.25) -- (225.78,182.87) -- cycle ;
   		\draw  [dash pattern={on 4.5pt off 4.5pt}] (117.57,77.34) -- (225.17,76.72) -- (225.78,182.86) -- (118.18,183.48) -- cycle ;
   		\draw  [dash pattern={on 4.5pt off 4.5pt}] (118.19,183.47) -- (225.79,182.85) -- (226.4,288.99) -- (118.8,289.61) -- cycle ;
   		\draw  [dash pattern={on 4.5pt off 4.5pt}] (225.78,182.86) -- (333.39,182.24) -- (334,288.38) -- (226.4,289) -- cycle ;
   		\draw    (152.34,111.55) ;
   		\draw   (156.18,130.1) .. controls (156.18,138.1) and (163.12,144.6) .. (171.68,144.6) .. controls (180.24,144.6) and (187.18,138.1) .. (187.18,130.1) .. controls (187.18,122.09) and (180.24,115.6) .. (171.68,115.6) .. controls (163.12,115.6) and (156.18,122.09) .. (156.18,130.1) -- cycle ;
   		\draw   (263.77,129.48) .. controls (263.77,137.49) and (270.71,143.98) .. (279.27,143.98) .. controls (287.83,143.98) and (294.77,137.49) .. (294.77,129.48) .. controls (294.77,121.48) and (287.83,114.98) .. (279.27,114.98) .. controls (270.71,114.98) and (263.77,121.48) .. (263.77,129.48) -- cycle ;
   		\draw   (156.8,236.23) .. controls (156.8,244.24) and (163.74,250.73) .. (172.3,250.73) .. controls (180.86,250.73) and (187.8,244.24) .. (187.8,236.23) .. controls (187.8,228.22) and (180.86,221.73) .. (172.3,221.73) .. controls (163.74,221.73) and (156.8,228.22) .. (156.8,236.23) -- cycle ;
   		\draw   (264.39,235.62) .. controls (264.39,243.63) and (271.33,250.12) .. (279.89,250.12) .. controls (288.45,250.12) and (295.39,243.63) .. (295.39,235.62) .. controls (295.39,227.61) and (288.45,221.12) .. (279.89,221.12) .. controls (271.33,221.12) and (264.39,227.61) .. (264.39,235.62) -- cycle ;
   		\draw  [dash pattern={on 4.5pt off 4.5pt}] (118.8,289.61) -- (226.4,288.99) -- (227.02,395.13) -- (119.41,395.75) -- cycle ;
   		\draw  [dash pattern={on 4.5pt off 4.5pt}] (226.4,288.99) -- (334.01,288.37) -- (334.62,394.51) -- (227.02,395.13) -- cycle ;
   		\draw  [dash pattern={on 4.5pt off 4.5pt}] (334.01,288.37) -- (441.61,287.75) -- (442.22,393.89) -- (334.62,394.51) -- cycle ;
   		\draw   (265.01,341.75) .. controls (265.01,349.76) and (271.95,356.25) .. (280.51,356.25) .. controls (289.07,356.25) and (296.01,349.76) .. (296.01,341.75) .. controls (296.01,333.74) and (289.07,327.25) .. (280.51,327.25) .. controls (271.95,327.25) and (265.01,333.74) .. (265.01,341.75) -- cycle ;
   		\draw   (157.41,342.37) .. controls (157.41,350.38) and (164.35,356.87) .. (172.91,356.87) .. controls (181.47,356.87) and (188.41,350.38) .. (188.41,342.37) .. controls (188.41,334.36) and (181.47,327.87) .. (172.91,327.87) .. controls (164.35,327.87) and (157.41,334.36) .. (157.41,342.37) -- cycle ;
   		\draw   (372.62,341.13) .. controls (372.62,349.14) and (379.56,355.63) .. (388.12,355.63) .. controls (396.68,355.63) and (403.62,349.14) .. (403.62,341.13) .. controls (403.62,333.12) and (396.68,326.63) .. (388.12,326.63) .. controls (379.56,326.63) and (372.62,333.12) .. (372.62,341.13) -- cycle ;
   	\end{tikzpicture}
   	\caption{The set $\Omega_k$ for $k=7$}
   	\label{fig:optimality_buser_fig}
   \end{figure}  
We now also prove a similar upper bound for $\lambda(\Omega_k)$. We proceed similarly as in the proof of Lemma \ref{lemma:forelli} above. We first observe that
\[
\lambda(\Omega_k)\le \lambda(\mathrm{int}(\mathcal{Q}_k)).
\]
We take the following Lipschitz function defined on $\mathring{Q}_k$ by
\[
u_k(x)=\left(\log\left(\frac{1}{2\,\varepsilon_k}\right)\right)^{-1}\,\min\left\{\log\left(\frac{1}{2\,\varepsilon_k}\right),\log\left(\dfrac{1}{\varepsilon_k}\,\sqrt{\left(x_1-\dfrac{1}{2}\right)^2+\left(x_2-\dfrac{1}{2}\right)^2}\right)\right\}.
\]
Observe that this identically vanishes on $\partial B_{\varepsilon_k}(1/2,1/2)$ and coincides with $1$ on $((0,1)\times(0,1))\setminus B_{1/2}(1/2,1/2)$, see Figure \ref{fig:trombetta}.
\begin{figure}
\includegraphics[scale=.3]{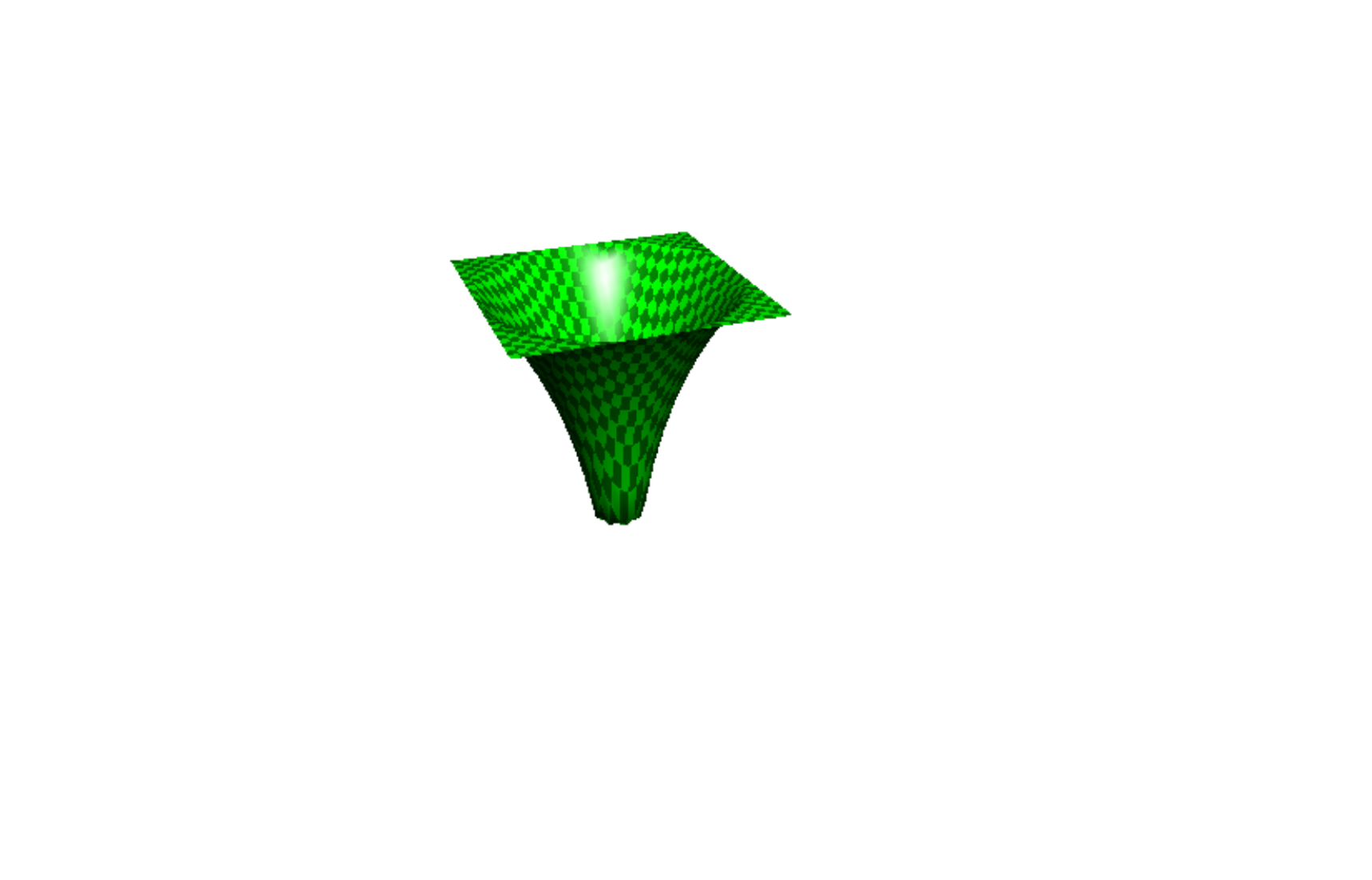}
\caption{The graph of the funnel--type function $u_k$.}
\label{fig:trombetta}
\end{figure}
Then, we periodically repeat it, i.e. we consider
\[
U_k(x)=\sum_{\mathbf{i}\in \mathcal{I}_k} u_k(x+\mathbf{i}).
\]
Finally, we take $\eta_k$ a $1-$Lipschitz cut-off function such that
\[
0\le \eta_k\le 1,\qquad \eta_k\equiv 1 \mbox{ on }\widetilde{\mathcal{Q}_{k}},\qquad \eta_k\equiv 0 \mbox{ on } \partial\mathcal{Q}_k,
\]
where\footnote{In what follows, we suppose that $k\ge 9$. In view of our scopes, this is not restrictive.}
\[
\widetilde{\mathcal{Q}_k}=\bigcup_{\mathbf{i}\in \widetilde{\mathcal{I}}_k} (\mathring{Q}_k+\mathbf{i}),\qquad \mbox{ with }\widetilde{\mathcal{I}}_k=\Big\{\mathbf{i}=(i_1,i_2)\in\mathbb{N}^2\, :\, 1\le \max\{i_1,\,i_2\}\le {\lfloor \sqrt{k}\rfloor}-2\Big\},
\] 
see Figure \ref{fig:optimality_buser_fig2}.
It is easy to see that $\varphi=\eta_k\,U_k\in W^{1,2}_0(\mathrm{int}(\mathcal{Q}_k))$. Thus, by definition of $\lambda$, we have
\begin{equation}
\label{sigillo}
\sqrt{\lambda(\Omega_k)}\le \sqrt{\lambda(\mathrm{int}(\mathcal{Q}_k))}\le \frac{\displaystyle \left(\int_{\mathcal{Q}_k} |\nabla \eta_k|^2\,|U_k|^2\,dx\right)^\frac{1}{2}+\left(\int_{\mathcal{Q}_k} |\nabla U_k|^2\,|\eta_k|^2\,dx\right)^\frac{1}{2}}{\displaystyle\left(\int_{\mathcal{Q}_k}|\eta_k\,U_k|^2\,dx\right)^\frac{1}{2}}
\end{equation}
By using the properties of both $U_k$ and $\eta_k$, we have
\[
\int_{\mathcal{Q}_k}|\eta_k\,U_k|^2\,dx\ge \int_{\widetilde{\mathcal{Q}}_k}|U_k|^2\,dx=\Big({\lfloor \sqrt{k}\rfloor}-2\Big)^2\,\int_{(0,1)\times(0,1)} |u_k|^2\,dx
\]
and
\[
\int_{\mathcal{Q}_k} |\nabla U_k|^2\,|\eta_k|^2\,dx\le \int_{\mathcal{Q}_k} |\nabla U_k|^2\,dx=\Big(\lfloor \sqrt{k}\rfloor\Big)^2\,\int_{(0,1)\times(0,1)}|\nabla u_k|^2\,dx.
\]
Similarly, by recalling that $\eta_k$ is constant on $\widetilde{\mathcal{Q}_k}$, we have 
\[
\begin{split}
\int_{\mathcal{Q}_k} |\nabla \eta_k|^2\,|U_k|^2\,dx&=\int_{\mathcal{Q}_k\setminus\widetilde{\mathcal{Q}_k}} |\nabla \eta_k|^2\,|U_k|^2\,dx\le \int_{\mathcal{Q}_k\setminus\widetilde{\mathcal{Q}_k}}|U_k|^2\,dx\\
&=\left[\Big(\lfloor \sqrt{k}\rfloor\Big)^2-\Big({\lfloor \sqrt{k}\rfloor}-2\Big)^2\right]\,\int_{(0,1)\times(0,1)}|u_k|^2\,dx.
\end{split}
\]
We still need to compute the $W^{1,2}$ norm of $u_k$. From its definition, there exists a constant $C_1>0$ such that we have
\[
\begin{split}
\int_{\mathring{Q}_k}|u_k|^2\,dx&\ge \frac{1}{\big(\log(2\,\varepsilon_k)\big)^2}\int_{B_{1/2}\setminus B_{\varepsilon_k}}\left(\log\left(\dfrac{|x|}{\varepsilon_k}\right)\right)^2\,dx\\
&=\frac{\varepsilon_k^2}{\big(\log(2\,\varepsilon_k)\big)^2}\int_{B_{1/2\varepsilon_k}\setminus B_{1}}\log^2|y|\,dy=\frac{2\,\pi\,\varepsilon_k^2}{\big(\log(2\,\varepsilon_k)\big)^2}\int_1^\frac{1}{2\,\varepsilon_k}\varrho\,\log^2\varrho\,d\varrho\ge C_1,
\end{split}
\]
for $k$ large enough. As for its gradient, we have 
\[
\begin{split}
\int_{\mathring{Q}_k}|\nabla u_k|^2\,dx&= \frac{1}{\big(\log(2\,\varepsilon_k)\big)^2}\int_{B_{1/2}\setminus B_{\varepsilon_k}}\frac{1}{|x|^2}\,dx\\
&=\frac{2\,\pi}{\big(\log(2\,\varepsilon_k)\big)^2}\,\int_{\varepsilon_k}^\frac{1}{2}\frac{1}{\varrho}\,d\varrho\\
&=\frac{2\,\pi}{\big(\log(2\,\varepsilon_k)\big)^2}\,|\log(2\,\varepsilon_k)|=\frac{2\,\pi}{|\log(2\,\varepsilon_k)|}\le\frac{2\,\pi}{\beta\,|\log k|}.
\end{split}
\]
By spending all these informations in \eqref{sigillo}, we get
\begin{equation}
\label{sigillo2}
\sqrt{\lambda(\Omega_k)}\le \left[\frac{\Big(\lfloor \sqrt{k}\rfloor\Big)^2}{\Big({\lfloor \sqrt{k}\rfloor}-2\Big)^2}-1\right]^\frac{1}{2}+\frac{\lfloor \sqrt{k}\rfloor}{{\lfloor \sqrt{k}\rfloor}-2}\,\sqrt{\frac{2\,\pi}{\beta\,C_1}\,\frac{1}{|\log k|}}.
\end{equation}
By using that
\[
\lim_{k\to\infty}\frac{\lfloor \sqrt{k}\rfloor}{{\lfloor \sqrt{k}\rfloor}-2}=1 \qquad \mbox{ and }\qquad \left[\frac{\Big(\lfloor \sqrt{k}\rfloor\Big)^2}{\Big({\lfloor \sqrt{k}\rfloor}-2\Big)^2}-1\right]^\frac{1}{2}\le \left(\frac{8}{\lfloor \sqrt{k}\rfloor}\right)^\frac{1}{2}\le \frac{4}{\sqrt[4]{k}}, \mbox{ for } k\ge 9,
\]
from \eqref{sigillo2} we finally get that there exists a constant $C_2>0$ such that
\begin{equation}
\label{upper_bound_lambda_piccolo}
\lambda(\Omega_{k}) \leq \frac{C_2}{|\log k|},
\end{equation}
for $k$ sufficiently large.

\begin{figure}
	\centering
	
	\tikzset {_sqwguvlde/.code = {\pgfsetadditionalshadetransform{ \pgftransformshift{\pgfpoint{0 bp } { 0 bp }  }  \pgftransformrotate{0 }  \pgftransformscale{2 }  }}}
	\pgfdeclarehorizontalshading{_xt7319nud}{150bp}{rgb(0bp)=(1,1,1);
		rgb(37.5bp)=(1,1,1);
		rgb(62.5bp)=(0,0,0);
		rgb(100bp)=(0,0,0)}
	\tikzset{_0ygkmlxpu/.code = {\pgfsetadditionalshadetransform{\pgftransformshift{\pgfpoint{0 bp } { 0 bp }  }  \pgftransformrotate{0 }  \pgftransformscale{2 } }}}
	\pgfdeclarehorizontalshading{_qyhaojj1f} {150bp} {color(0bp)=(transparent!0);
		color(37.5bp)=(transparent!0);
		color(62.5bp)=(transparent!10);
		color(100bp)=(transparent!10) } 
	\pgfdeclarefading{_xxx39glaw}{\tikz \fill[shading=_qyhaojj1f,_0ygkmlxpu] (0,0) rectangle (50bp,50bp); } 
	
	
	\tikzset {_03gv5bxjb/.code = {\pgfsetadditionalshadetransform{ \pgftransformshift{\pgfpoint{0 bp } { 0 bp }  }  \pgftransformrotate{0 }  \pgftransformscale{2 }  }}}
	\pgfdeclarehorizontalshading{_56tru615w}{150bp}{rgb(0bp)=(1,1,1);
		rgb(37.5bp)=(1,1,1);
		rgb(62.5bp)=(0.9,0.9,0.9);
		rgb(100bp)=(0.9,0.9,0.9)}
	
	
	\tikzset {_ewrtfnkce/.code = {\pgfsetadditionalshadetransform{ \pgftransformshift{\pgfpoint{0 bp } { 0 bp }  }  \pgftransformrotate{0 }  \pgftransformscale{2 }  }}}
	\pgfdeclarehorizontalshading{_lit1siyp3}{150bp}{rgb(0bp)=(1,1,1);
		rgb(37.5bp)=(1,1,1);
		rgb(62.5bp)=(0,0,0);
		rgb(100bp)=(0,0,0)}
	\tikzset{_izv6vge4m/.code = {\pgfsetadditionalshadetransform{\pgftransformshift{\pgfpoint{0 bp } { 0 bp }  }  \pgftransformrotate{0 }  \pgftransformscale{2 } }}}
	\pgfdeclarehorizontalshading{_092cu9sxc} {150bp} {color(0bp)=(transparent!0);
		color(37.5bp)=(transparent!0);
		color(62.5bp)=(transparent!10);
		color(100bp)=(transparent!10) } 
	\pgfdeclarefading{_qlwxn2pc4}{\tikz \fill[shading=_092cu9sxc,_izv6vge4m] (0,0) rectangle (50bp,50bp); } 
	
	
	\tikzset {_f12nxy7sd/.code = {\pgfsetadditionalshadetransform{ \pgftransformshift{\pgfpoint{0 bp } { 0 bp }  }  \pgftransformrotate{0 }  \pgftransformscale{2 }  }}}
	\pgfdeclarehorizontalshading{_evc0lnof8}{150bp}{rgb(0bp)=(1,1,1);
		rgb(37.5bp)=(1,1,1);
		rgb(62.5bp)=(0.9,0.9,0.9);
		rgb(100bp)=(0.9,0.9,0.9)}
	
	
	\tikzset {_jn23th476/.code = {\pgfsetadditionalshadetransform{ \pgftransformshift{\pgfpoint{0 bp } { 0 bp }  }  \pgftransformrotate{0 }  \pgftransformscale{2 }  }}}
	\pgfdeclarehorizontalshading{_mr9ufg5nk}{150bp}{rgb(0bp)=(1,1,1);
		rgb(37.5bp)=(1,1,1);
		rgb(62.5bp)=(0,0,0);
		rgb(100bp)=(0,0,0)}
	\tikzset{_dhrmf5se3/.code = {\pgfsetadditionalshadetransform{\pgftransformshift{\pgfpoint{0 bp } { 0 bp }  }  \pgftransformrotate{0 }  \pgftransformscale{2 } }}}
	\pgfdeclarehorizontalshading{_wbbm0uxrr} {150bp} {color(0bp)=(transparent!0);
		color(37.5bp)=(transparent!0);
		color(62.5bp)=(transparent!10);
		color(100bp)=(transparent!10) } 
	\pgfdeclarefading{_gcvclums4}{\tikz \fill[shading=_wbbm0uxrr,_dhrmf5se3] (0,0) rectangle (50bp,50bp); } 
	
	
	\tikzset {_orvltnqlx/.code = {\pgfsetadditionalshadetransform{ \pgftransformshift{\pgfpoint{0 bp } { 0 bp }  }  \pgftransformrotate{0 }  \pgftransformscale{2 }  }}}
	\pgfdeclarehorizontalshading{_ww3gg76dp}{150bp}{rgb(0bp)=(1,1,1);
		rgb(37.5bp)=(1,1,1);
		rgb(62.5bp)=(0.9,0.9,0.9);
		rgb(100bp)=(0.9,0.9,0.9)}
	
	
	\tikzset {_xq64cedcs/.code = {\pgfsetadditionalshadetransform{ \pgftransformshift{\pgfpoint{0 bp } { 0 bp }  }  \pgftransformrotate{0 }  \pgftransformscale{2 }  }}}
	\pgfdeclarehorizontalshading{_icbu9q5ap}{150bp}{rgb(0bp)=(1,1,1);
		rgb(37.5bp)=(1,1,1);
		rgb(62.5bp)=(0,0,0);
		rgb(100bp)=(0,0,0)}
	\tikzset{_b1jewfwnj/.code = {\pgfsetadditionalshadetransform{\pgftransformshift{\pgfpoint{0 bp } { 0 bp }  }  \pgftransformrotate{0 }  \pgftransformscale{2 } }}}
	\pgfdeclarehorizontalshading{_9i87vpln1} {150bp} {color(0bp)=(transparent!0);
		color(37.5bp)=(transparent!0);
		color(62.5bp)=(transparent!10);
		color(100bp)=(transparent!10) } 
	\pgfdeclarefading{_0rzy2vafw}{\tikz \fill[shading=_9i87vpln1,_b1jewfwnj] (0,0) rectangle (50bp,50bp); } 
	
	
	\tikzset {_vymr3us5c/.code = {\pgfsetadditionalshadetransform{ \pgftransformshift{\pgfpoint{0 bp } { 0 bp }  }  \pgftransformrotate{0 }  \pgftransformscale{2 }  }}}
	\pgfdeclarehorizontalshading{_oennzkq8x}{150bp}{rgb(0bp)=(1,1,1);
		rgb(37.5bp)=(1,1,1);
		rgb(62.5bp)=(0.9,0.9,0.9);
		rgb(100bp)=(0.9,0.9,0.9)}
	\tikzset{every picture/.style={line width=0.75pt}} 
	
	\begin{tikzpicture}[x=0.35pt,y=0.35pt,yscale=-1,xscale=1]
		
		\path  [shading=_xt7319nud,_sqwguvlde,path fading= _xxx39glaw ,fading transform={xshift=2}] (251,106) -- (327,106) -- (327,182) -- (251,182) -- cycle ; 
		\draw  [color={rgb, 255:red, 0; green, 0; blue, 0 }  ,draw opacity=0.57 ][dash pattern={on 4.5pt off 4.5pt}] (251,106) -- (327,106) -- (327,182) -- (251,182) -- cycle ; 
		
		\path  [shading=_56tru615w,_03gv5bxjb] (273.9,144) .. controls (273.9,135.66) and (280.66,128.9) .. (289,128.9) .. controls (297.34,128.9) and (304.1,135.66) .. (304.1,144) .. controls (304.1,152.34) and (297.34,159.1) .. (289,159.1) .. controls (280.66,159.1) and (273.9,152.34) .. (273.9,144) -- cycle ; 
		\draw  [color={rgb, 255:red, 0; green, 0; blue, 0 }  ,draw opacity=0.05 ] (273.9,144) .. controls (273.9,135.66) and (280.66,128.9) .. (289,128.9) .. controls (297.34,128.9) and (304.1,135.66) .. (304.1,144) .. controls (304.1,152.34) and (297.34,159.1) .. (289,159.1) .. controls (280.66,159.1) and (273.9,152.34) .. (273.9,144) -- cycle ; 
		
		\draw  [dash pattern={on 4.5pt off 4.5pt}] (101.6,105) -- (177,105) -- (177,180.4) -- (101.6,180.4) -- cycle ;
		\draw   (124.06,142.7) .. controls (124.06,134.28) and (130.88,127.46) .. (139.3,127.46) .. controls (147.72,127.46) and (154.54,134.28) .. (154.54,142.7) .. controls (154.54,151.12) and (147.72,157.94) .. (139.3,157.94) .. controls (130.88,157.94) and (124.06,151.12) .. (124.06,142.7) -- cycle ;
		\path  [shading=_lit1siyp3,_ewrtfnkce,path fading= _qlwxn2pc4 ,fading transform={xshift=2}] (251,180) -- (327,180) -- (327,256) -- (251,256) -- cycle ; 
		\draw  [color={rgb, 255:red, 0; green, 0; blue, 0 }  ,draw opacity=0.57 ][dash pattern={on 4.5pt off 4.5pt}] (251,180) -- (327,180) -- (327,256) -- (251,256) -- cycle ; 
		
		\path  [shading=_evc0lnof8,_f12nxy7sd] (273.9,218) .. controls (273.9,209.66) and (280.66,202.9) .. (289,202.9) .. controls (297.34,202.9) and (304.1,209.66) .. (304.1,218) .. controls (304.1,226.34) and (297.34,233.1) .. (289,233.1) .. controls (280.66,233.1) and (273.9,226.34) .. (273.9,218) -- cycle ; 
		\draw  [color={rgb, 255:red, 0; green, 0; blue, 0 }  ,draw opacity=0.05 ] (273.9,218) .. controls (273.9,209.66) and (280.66,202.9) .. (289,202.9) .. controls (297.34,202.9) and (304.1,209.66) .. (304.1,218) .. controls (304.1,226.34) and (297.34,233.1) .. (289,233.1) .. controls (280.66,233.1) and (273.9,226.34) .. (273.9,218) -- cycle ; 
		
		\path  [shading=_mr9ufg5nk,_jn23th476,path fading= _gcvclums4 ,fading transform={xshift=2}] (176,179) -- (252,179) -- (252,255) -- (176,255) -- cycle ; 
		\draw  [color={rgb, 255:red, 0; green, 0; blue, 0 }  ,draw opacity=0.57 ][dash pattern={on 4.5pt off 4.5pt}] (176,179) -- (252,179) -- (252,255) -- (176,255) -- cycle ; 
		
		\path  [shading=_ww3gg76dp,_orvltnqlx] (198.9,217) .. controls (198.9,208.66) and (205.66,201.9) .. (214,201.9) .. controls (222.34,201.9) and (229.1,208.66) .. (229.1,217) .. controls (229.1,225.34) and (222.34,232.1) .. (214,232.1) .. controls (205.66,232.1) and (198.9,225.34) .. (198.9,217) -- cycle ; 
		\draw  [color={rgb, 255:red, 0; green, 0; blue, 0 }  ,draw opacity=0.05 ] (198.9,217) .. controls (198.9,208.66) and (205.66,201.9) .. (214,201.9) .. controls (222.34,201.9) and (229.1,208.66) .. (229.1,217) .. controls (229.1,225.34) and (222.34,232.1) .. (214,232.1) .. controls (205.66,232.1) and (198.9,225.34) .. (198.9,217) -- cycle ; 
		
		\path  [shading=_icbu9q5ap,_xq64cedcs,path fading= _0rzy2vafw ,fading transform={xshift=2}] (176,105) -- (252,105) -- (252,181) -- (176,181) -- cycle ; 
		\draw  [color={rgb, 255:red, 0; green, 0; blue, 0 }  ,draw opacity=0.57 ][dash pattern={on 4.5pt off 4.5pt}] (176,105) -- (252,105) -- (252,181) -- (176,181) -- cycle ; 
		
		\path  [shading=_oennzkq8x,_vymr3us5c] (198.9,143) .. controls (198.9,134.66) and (205.66,127.9) .. (214,127.9) .. controls (222.34,127.9) and (229.1,134.66) .. (229.1,143) .. controls (229.1,151.34) and (222.34,158.1) .. (214,158.1) .. controls (205.66,158.1) and (198.9,151.34) .. (198.9,143) -- cycle ; 
		\draw  [color={rgb, 255:red, 0; green, 0; blue, 0 }  ,draw opacity=0.05 ] (198.9,143) .. controls (198.9,134.66) and (205.66,127.9) .. (214,127.9) .. controls (222.34,127.9) and (229.1,134.66) .. (229.1,143) .. controls (229.1,151.34) and (222.34,158.1) .. (214,158.1) .. controls (205.66,158.1) and (198.9,151.34) .. (198.9,143) -- cycle ; 
		
		\draw  [dash pattern={on 4.5pt off 4.5pt}] (101.6,181) -- (177,181) -- (177,256.4) -- (101.6,256.4) -- cycle ;
		\draw   (124.06,218.7) .. controls (124.06,210.28) and (130.88,203.46) .. (139.3,203.46) .. controls (147.72,203.46) and (154.54,210.28) .. (154.54,218.7) .. controls (154.54,227.12) and (147.72,233.94) .. (139.3,233.94) .. controls (130.88,233.94) and (124.06,227.12) .. (124.06,218.7) -- cycle ;
		\draw  [dash pattern={on 4.5pt off 4.5pt}] (100.6,29) -- (176,29) -- (176,104.4) -- (100.6,104.4) -- cycle ;
		\draw   (123.06,66.7) .. controls (123.06,58.28) and (129.88,51.46) .. (138.3,51.46) .. controls (146.72,51.46) and (153.54,58.28) .. (153.54,66.7) .. controls (153.54,75.12) and (146.72,81.94) .. (138.3,81.94) .. controls (129.88,81.94) and (123.06,75.12) .. (123.06,66.7) -- cycle ;
		\draw  [dash pattern={on 4.5pt off 4.5pt}] (175.6,29) -- (251,29) -- (251,104.4) -- (175.6,104.4) -- cycle ;
		\draw   (198.06,66.7) .. controls (198.06,58.28) and (204.88,51.46) .. (213.3,51.46) .. controls (221.72,51.46) and (228.54,58.28) .. (228.54,66.7) .. controls (228.54,75.12) and (221.72,81.94) .. (213.3,81.94) .. controls (204.88,81.94) and (198.06,75.12) .. (198.06,66.7) -- cycle ;
		\draw  [dash pattern={on 4.5pt off 4.5pt}] (250.6,29) -- (326,29) -- (326,104.4) -- (250.6,104.4) -- cycle ;
		\draw   (273.06,66.7) .. controls (273.06,58.28) and (279.88,51.46) .. (288.3,51.46) .. controls (296.72,51.46) and (303.54,58.28) .. (303.54,66.7) .. controls (303.54,75.12) and (296.72,81.94) .. (288.3,81.94) .. controls (279.88,81.94) and (273.06,75.12) .. (273.06,66.7) -- cycle ;
		\draw  [dash pattern={on 4.5pt off 4.5pt}] (176.6,255) -- (252,255) -- (252,330.4) -- (176.6,330.4) -- cycle ;
		\draw   (199.06,292.7) .. controls (199.06,284.28) and (205.88,277.46) .. (214.3,277.46) .. controls (222.72,277.46) and (229.54,284.28) .. (229.54,292.7) .. controls (229.54,301.12) and (222.72,307.94) .. (214.3,307.94) .. controls (205.88,307.94) and (199.06,301.12) .. (199.06,292.7) -- cycle ;
		\draw  [dash pattern={on 4.5pt off 4.5pt}] (100.6,257) -- (176,257) -- (176,332.4) -- (100.6,332.4) -- cycle ;
		\draw   (123.06,294.7) .. controls (123.06,286.28) and (129.88,279.46) .. (138.3,279.46) .. controls (146.72,279.46) and (153.54,286.28) .. (153.54,294.7) .. controls (153.54,303.12) and (146.72,309.94) .. (138.3,309.94) .. controls (129.88,309.94) and (123.06,303.12) .. (123.06,294.7) -- cycle ;
		\draw  [dash pattern={on 4.5pt off 4.5pt}] (251.6,255) -- (327,255) -- (327,330.4) -- (251.6,330.4) -- cycle ;
		\draw   (274.06,292.7) .. controls (274.06,284.28) and (280.88,277.46) .. (289.3,277.46) .. controls (297.72,277.46) and (304.54,284.28) .. (304.54,292.7) .. controls (304.54,301.12) and (297.72,307.94) .. (289.3,307.94) .. controls (280.88,307.94) and (274.06,301.12) .. (274.06,292.7) -- cycle ;
		\draw  [dash pattern={on 4.5pt off 4.5pt}] (326.6,29) -- (402,29) -- (402,104.4) -- (326.6,104.4) -- cycle ;
		\draw   (349.06,66.7) .. controls (349.06,58.28) and (355.88,51.46) .. (364.3,51.46) .. controls (372.72,51.46) and (379.54,58.28) .. (379.54,66.7) .. controls (379.54,75.12) and (372.72,81.94) .. (364.3,81.94) .. controls (355.88,81.94) and (349.06,75.12) .. (349.06,66.7) -- cycle ;
		\draw  [dash pattern={on 4.5pt off 4.5pt}] (325.6,105) -- (401,105) -- (401,180.4) -- (325.6,180.4) -- cycle ;
		\draw   (348.06,142.7) .. controls (348.06,134.28) and (354.88,127.46) .. (363.3,127.46) .. controls (371.72,127.46) and (378.54,134.28) .. (378.54,142.7) .. controls (378.54,151.12) and (371.72,157.94) .. (363.3,157.94) .. controls (354.88,157.94) and (348.06,151.12) .. (348.06,142.7) -- cycle ;
		\draw  [dash pattern={on 4.5pt off 4.5pt}] (327.6,180) -- (403,180) -- (403,255.4) -- (327.6,255.4) -- cycle ;
		\draw   (350.06,217.7) .. controls (350.06,209.28) and (356.88,202.46) .. (365.3,202.46) .. controls (373.72,202.46) and (380.54,209.28) .. (380.54,217.7) .. controls (380.54,226.12) and (373.72,232.94) .. (365.3,232.94) .. controls (356.88,232.94) and (350.06,226.12) .. (350.06,217.7) -- cycle ;
		\draw  [dash pattern={on 4.5pt off 4.5pt}] (327.6,256) -- (403,256) -- (403,331.4) -- (327.6,331.4) -- cycle ;
		\draw   (350.06,293.7) .. controls (350.06,285.28) and (356.88,278.46) .. (365.3,278.46) .. controls (373.72,278.46) and (380.54,285.28) .. (380.54,293.7) .. controls (380.54,302.12) and (373.72,308.94) .. (365.3,308.94) .. controls (356.88,308.94) and (350.06,302.12) .. (350.06,293.7) -- cycle ;

	\end{tikzpicture}
	\caption{The set $\widetilde{\mathcal{Q}}_{k}$ for $k=17$: it is made of the ``internal" perforated squares in grey.}
	\label{fig:optimality_buser_fig2}
\end{figure}
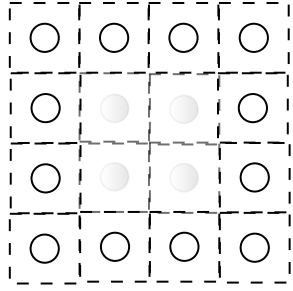

 \vskip.2cm\noindent
{\it Estimate for $h(\Omega_k)$}. By a standard approximation argument (see for example \cite[Proposition 3.3]{Pa}), we can use $\mathcal{Q}_k$ as an admissible set in the definition of $h(\Omega_k)$. This gives
\[
h(\Omega_k)\le \frac{\mathcal{H}^{N-1}(\partial\mathcal{Q}_k)}{|\mathcal{Q}_k|}=\frac{4\,\lfloor \sqrt{k} \rfloor+2\,\pi\,\Big(\lfloor \sqrt{k} \rfloor\Big)^2\,\varepsilon_k}{\Big(\lfloor \sqrt{k} \rfloor\Big)^2(1-\pi\,\varepsilon_k^2)}.
\]
Since by definition we have $\varepsilon_k^2=o(1/k)$, there exists a constant $C_3>0$ such that 
\begin{equation} 
\label{upper_bound_cheeger}
h(\Omega_k)\le \frac{C_3}{\sqrt{k}},
\end{equation}
for $k$ large enough. 
Moreover, for any $k \geq 2$, we have that 
\[
\Omega_k\subseteq \mathbb{R}\times \big(-1,\lfloor \sqrt{k} \rfloor\big).
\]
Thus, by monotonicity with respect to set inclusion and the scaling property of the Cheeger constant, we get
 \begin{equation} \label{cheeger_constant_rectangle}
	h(\Omega_k) \geq \frac{1}{1+\lfloor \sqrt{k} \rfloor}\,h(\mathbb{R}\times (0,1)) = \frac{1}{1+\lfloor \sqrt{k} \rfloor}.
\end{equation}
In the last equality, we used \cite[Theorem 3.1]{KP}.
\vskip.2cm\noindent
{\it Conclusion.} By gathering together the estimates \eqref{lambda_2_omega_k}, \eqref{upper_bound_lambda_piccolo}, \eqref{upper_bound_cheeger} and \eqref{cheeger_constant_rectangle}, we finally obtain
\[
\frac{1}{C}\,\frac{k}{\log k}\le \frac{\lambda(\Omega_k)}{\Big(h(\Omega_k)\Big)^2}\le C\,\frac{k}{\log k},\qquad \mbox{ for $k$ large enough}.
\]
This is enough to establish \eqref{francesco} and conclude the proof.
\end{proof}
As the reader may easily realize, the previous perforated set does not permit to show that
\[
\mathcal{C}_{\rm B}(k)\sim k,\qquad \mbox{ for }k\nearrow \infty.
\]
Such an example may suggest that the sharp growth of $\mathcal{C}_{\rm B}(k)$ could be $k/\log k$, as $k$ goes to $\infty$. In other words, the estimate of Theorem \ref{thm:buser} might perhaps be improved by a logarithmic factor. We leave the following open problem, that we think to be quite interesting.
\begin{open}
Prove or disprove that 
\[
\mathcal{C}_{\rm B}(k)\sim \frac{k}{\log k}\qquad \mbox{ for } k\nearrow \infty.
\]
\end{open}

\appendix

\section{Perforated cubes}
	In what follows, for $R>0$ and $x_0\in\mathbb{R}^n$ we still indicate by $Q_R$ and $Q_R(x_0)$ the cubes given by
	\[
	Q_R=(-R,R)^N\qquad \mbox{ and }\qquad Q_R(x_0)=Q_R+x_0,
	\]
	respectively.
In the proof of Lemma \ref{lemma:forelli}, we used the following result, which is interesting in itself. It concerns the minimization problem
\[	
\Lambda_p(Q_R(x_0)\setminus \overline{B_r(x_0)})=\inf_{u\in \mathrm{Lip}(\overline{Q_R(x_0)})} \left\{\int_{Q_R(x_0)} |\nabla u|^p\,dx\, :\, \|u\|_{L^p(Q_R(x_0))}=1,\, u(x_0)=0 \mbox{ on } \overline{B_r(x_0)}\right\},
\]
with $0\le r<R$. For simplicity, we state it with $x_0=0$.
\begin{lemma} 
\label{lemma:cube_symmetry}
		Let $0\le r<R$ and $1\le p<\infty$. We set 
		\[
		\mathrm{Lip}^{\rm sym}_+(\overline{Q_R})=\Big\{u\in \mathrm{Lip}(\overline{Q_R})\, :\, u\ge 0,\ u\circ \mathcal{R}_i=u \mbox{ for } i=1,\dots,N\Big\},
		\]
		where $\mathcal{R}_i:\mathbb{R}^N\to\mathbb{R}^N$ is the reflection with respect to the hyperplane $\{x\in\mathbb{R}^N\, :\, \langle x,\mathbf{e}_i\rangle=0\}$.
		Then, for every every $1\le p<\infty$ we have
		\[
		\begin{split}
			\inf_{u\in \mathrm{Lip}(\overline{Q_R})} &\left\{\int_{Q_R} |\nabla u|^p\,dx\, :\, \|u\|_{L^p(Q_R)}=1,\, u=0 \mbox{ on } \overline{B_r}\right\}\\
			&=\inf_{u\in \mathrm{Lip}^{\rm sym}_+(\overline{Q_R})} \left\{\int_{Q_R} |\nabla u|^p\,dx\, :\, \|u\|_{L^p(Q_R)}=1,\, u=0 \mbox{ on } \overline{B_r}\right\}.
		\end{split}
		\]
	\end{lemma}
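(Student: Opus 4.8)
The inequality ``$\ge$'' is immediate, since $\mathrm{Lip}^{\rm sym}_+(\overline{Q_R})$ is contained in the class of admissible functions for the left-hand side. For the reverse inequality, the plan is to show that every admissible $u$ can be replaced, without increasing the $p$-energy, by a function in $\mathrm{Lip}^{\rm sym}_+(\overline{Q_R})$ having the same $L^p$ norm and still vanishing on $\overline{B_r}$.

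First I would reduce to nonnegative functions: replacing $u$ by $|u|$ keeps the Lipschitz character, leaves $\|u\|_{L^p(Q_R)}$ and $\int_{Q_R}|\nabla u|^p\,dx$ unchanged (since $|\nabla |u||=|\nabla u|$ a.e.), and preserves the condition $u=0$ on $\overline{B_r}$. So assume $u\ge 0$. Next, for each $i\in\{1,\dots,N\}$ I introduce the symmetrization
\[
S_i u(x):=\left(\frac{u(x)^p+u(\mathcal{R}_i x)^p}{2}\right)^{\frac1p},
\]
which is $2^{-1/p}$ times the $\ell^p$-norm of the pair $(u(x),u(\mathcal{R}_i x))$; since the $\ell^p$-norm on $\mathbb{R}^2$ is globally Lipschitz and $x\mapsto(u(x),u(\mathcal{R}_i x))$ is Lipschitz, we get $S_i u\in\mathrm{Lip}(\overline{Q_R})$. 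Because $\mathcal{R}_i$ maps $Q_R$ onto itself and $\overline{B_r}$ onto itself and is measure preserving, a change of variables yields $\|S_i u\|_{L^p(Q_R)}=\|u\|_{L^p(Q_R)}$, while $S_i u=0$ on $\overline{B_r}$, $S_i u\ge 0$, and $S_i u\circ\mathcal{R}_i=S_i u$ by construction.

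The crucial point is the energy estimate $\int_{Q_R}|\nabla S_i u|^p\,dx\le\int_{Q_R}|\nabla u|^p\,dx$. On the set where $u(x)=u(\mathcal{R}_i x)=0$ the function $S_i u$ attains its minimum value $0$, so $\nabla S_i u=0$ a.e.\ there; off that set, differentiating $(S_i u)^p=\tfrac12(u(x)^p+u(\mathcal{R}_i x)^p)$ and writing $a=u(x)$, $b=u(\mathcal{R}_i x)$, $\alpha=|\nabla u(x)|$, $\beta=|(\nabla u)(\mathcal{R}_i x)|$ (here $|\nabla(u\circ\mathcal{R}_i)|=\beta$ since $\mathcal{R}_i$ is orthogonal), I obtain
\[
|\nabla S_i u|^p\le\left(\frac{a^p+b^p}{2}\right)^{1-p}\left(\frac{a^{p-1}\alpha+b^{p-1}\beta}{2}\right)^p=\frac{a^p+b^p}{2}\left(w_1\,\frac{\alpha}{a}+w_2\,\frac{\beta}{b}\right)^p\le\frac{\alpha^p+\beta^p}{2},
\]
where $w_1=a^p/(a^p+b^p)$, $w_2=b^p/(a^p+b^p)$ and the last inequality is Jensen's inequality for the convex function $t\mapsto t^p$, $p\ge1$ (the degenerate cases $a=0$ or $b=0$ being checked directly). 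Integrating over $Q_R$ and using again that $\mathcal{R}_i$ is measure preserving gives the desired energy bound.

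Finally I would note that $S_i$ preserves symmetry with respect to the remaining reflections: if $u\circ\mathcal{R}_j=u$ with $j\ne i$, then, since the $\mathcal{R}_\ell$ commute, $S_i u\circ\mathcal{R}_j=S_i u$. Hence, starting from $|u|$ and applying $S_1,\dots,S_N$ in succession produces a function in $\mathrm{Lip}^{\rm sym}_+(\overline{Q_R})$ with the same $L^p$ norm as $u$, vanishing on $\overline{B_r}$, and with $p$-energy not exceeding $\int_{Q_R}|\nabla u|^p\,dx$; taking the infimum over all admissible $u$ yields ``$\le$''. The steps I expect to require the most care are the Lipschitz regularity of $S_i u$ near its zero set (handled by the Lipschitz continuity of the $\ell^p$-norm) and the almost-everywhere validity of the chain-rule computation above (handled by the fact that a Lipschitz function has vanishing gradient a.e.\ on any of its level sets).
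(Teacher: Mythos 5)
Your proof is correct and follows essentially the same route as the paper: you perform the same iterated midpoint symmetrization $u\mapsto\bigl(\tfrac12 u^p+\tfrac12(u\circ\mathcal{R}_i)^p\bigr)^{1/p}$ coordinate by coordinate, checking that it preserves the constraints, the $L^p$ norm, the vanishing on $\overline{B_r}$, and the previously gained symmetries (via commutativity of the reflections), exactly as in the paper's inductive construction of $\sigma_1,\dots,\sigma_N$. The only difference is that where the paper invokes Benguria's hidden convexity from the literature for the energy inequality, you prove that inequality directly by the pointwise chain-rule and Jensen computation (with the correct handling of the zero set), which is a valid self-contained substitute.
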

	\begin{proof}
		Obviously, we have
		\[
		\begin{split}
			\inf_{u\in \mathrm{Lip}(\overline{Q_R})} &\left\{\int_{Q_R} |\nabla u|^p\,dx\, :\, \|u\|_{L^p(Q_R)}=1,\, u=0 \mbox{ on }\overline{B_r}\right\}\\
			&\le \inf_{u\in \mathrm{Lip}^{\rm sym}_+(\overline{Q_R})} \left\{\int_{Q_R} |\nabla u|^p\,dx\, :\, \|u\|_{L^p(Q_R)}=1,\, u=0 \mbox{ on } \overline{B_r}\right\}.
		\end{split}
		\]
		In order to prove the reverse inequality, we take $u\in \mathrm{Lip}(\overline{Q_R})$ to be admissible for the variational problem on the left-hand side. Then, we define recursively the non-negative Lipschitz functions
		\[
		\sigma_1=\left(\frac{1}{2}\,|u|^p+\frac{1}{2}\,|u\circ \mathcal{R}_1|^p\right)^\frac{1}{p},
		\] 
		and
		\[
		\sigma_{i+1}=\left(\frac{1}{2}\,\sigma_i^p+\frac{1}{2}\,(\sigma_i\circ \mathcal{R}_{i+1})^p\right)^\frac{1}{p},\qquad \mbox{ for } i=1,\dots,N-1.
		\]
		We claim that for every $i=1,\dots,N$:
\begin{itemize}
\item[(i)]	 $\sigma_i\circ \mathcal{R}_{j}=\sigma_i$, for every $1\le j\le i$;
\vskip.2cm
\item[(ii)] $\|\nabla \sigma_i\|_{L^p(Q_R)}\le \|\nabla u\|_{L^p(Q_R)}$;
\vskip.2cm
\item[(iii)] $\|\sigma_i\|_{L^p(Q_R)}=1$ and $\sigma_i=0$ on $\overline{B_r}$.
\end{itemize}
		In particular, by taking $i=N$, we would get that $\sigma_N$ is admissible for the variational problem on $\mathrm{Lip}^{\rm sym}_+(\overline{Q_R})$ and 
		\[
		\int_{Q_R} |\nabla \sigma_N|^p\,dx\le \int_{Q_R} |\nabla u|^p\,dx.
		\]
		This would be enough to conclude the proof.
		\par
We are left with proving that $\sigma_i$ has the claimed properties. We proceed by induction: for $i=1$, properties (i) and (iii) are straightforward. As for property (ii), this follows from	{\it Benguria's hidden convexity} (originally devised in \cite{Be1, BBL} for $p=2$, extended to $1<p<\infty$ in \cite{BK, DS, KLP, TTU}, see also \cite[Theorem 2.9]{BPZ}), which gives
		\[
			\int_{Q_R} |\nabla \sigma_1|^p\,dx\le \frac{1}{2}\,\int_{Q_R} |\nabla u|^p\,dx+\frac{1}{2}\,\int_{Q_R} |\nabla (u\circ \mathcal{R}_1)|^p\,dx=\int_{Q_R} |\nabla u|^p\,dx,
		\]
		where we used that $\mathcal{R}_1(Q_R)=Q_R$ and that $\mathcal{R}_1$ is a linear isometry .
		Moreover, by construction we still have 
		\[
		\sigma_1=0 \mbox{ on }\overline{B_r(x_0)}\qquad \mbox{ and }\qquad \int_{Q_R(x_0)\setminus B_r(x_0)} |\sigma_1|^p\,dx=1.
		\]
We now take $1\le \ell\le N-1$ and suppose that (i), (ii) and (iii) holds for every $\sigma_1,\dots,\sigma_\ell$. We need to prove that these properties hold for $\sigma_{\ell+1}$, as well.	Again, property (iii) is immediate by construction and by the inductive assumption. For point (i), we have 
\[
\sigma_{\ell+1}=\left(\frac{1}{2}\,\sigma_\ell^p+\frac{1}{2}\,(\sigma_\ell\circ \mathcal{R}_{\ell+1})^p\right)^\frac{1}{p},
\]	
thus for $1\le j\le \ell$
\[
\begin{split}
\sigma_{\ell+1}\circ \mathcal{R}_j&=\left(\frac{1}{2}\,(\sigma_\ell\circ\mathcal{R}_j)^p+\frac{1}{2}\,(\sigma_\ell\circ \mathcal{R}_{\ell+1}\circ\mathcal{R}_j)^p\right)^\frac{1}{p}\\
&=\left(\frac{1}{2}\,\sigma_\ell^p+\frac{1}{2}\,(\sigma_\ell\circ \mathcal{R}_{j}\circ\mathcal{R}_{\ell+1})^p\right)^\frac{1}{p}=\left(\frac{1}{2}\,\sigma_\ell^p+\frac{1}{2}\,(\sigma_\ell\circ\mathcal{R}_{\ell+1})^p\right)^\frac{1}{p}=\sigma_{\ell},
\end{split}
\]
where we exploited the validity of (i) for $1\le j\le \ell$. As for the composition with $\mathcal{R}_{\ell+1}$, we also have
\[
\begin{split}
\sigma_{\ell+1}\circ \mathcal{R}_{\ell+1}&=\left(\frac{1}{2}\,(\sigma_\ell\circ\mathcal{R}_{\ell+1})^p+\frac{1}{2}\,(\sigma_\ell\circ \mathcal{R}_{\ell+1}\circ\mathcal{R}_{\ell+1})^p\right)^\frac{1}{p}\\
&=\left(\frac{1}{2}\,(\sigma_\ell\circ\mathcal{R}_{\ell+1})^p+\frac{1}{2}\,(\sigma_\ell)^p\right)^\frac{1}{p}=\sigma_{\ell},
\end{split}
\]
thanks to the fact that $\mathcal{R}_{\ell+1}\circ\mathcal{R}_{\ell+1}$ is the identity map. This establishes the validity of (i) for $\ell+1$, as well. We still need to verify property (ii): by using again Benguria's hidden convexity, we get
\[
\int_{Q_R} |\nabla \sigma_{\ell+1}|^p\,dx\le \frac{1}{2}\,\int_{Q_R} |\nabla \sigma_\ell|^p\,dx+\frac{1}{2}\,\int_{Q_R} |\nabla (\sigma_\ell\circ \mathcal{R}_{\ell+1})|^p\,dx=\int_{Q_R} |\nabla \sigma_\ell|^p\,dx,
\]
thanks to the fact that $\mathcal{R}_{\ell+1}(Q_R)=Q_R$.
By using that (ii) holds for $\sigma_\ell$, we get the desired conclusion.
\end{proof}
\medskip

\end{document}